\documentclass[dvipdfmx]{article}
\usepackage{amsmath,amsthm,amssymb}
\usepackage{mathrsfs,bm}
\usepackage{enumitem, appendix}

\usepackage[dvipdfmx]{hyperref}
\hypersetup{
    colorlinks=true,
    linkcolor=blue,
}

\sloppy

\newtheorem{theo}{Theorem}[section]
\newtheorem{prop}[theo]{Proposition}
\newtheorem{coro}[theo]{Corollary}
\newtheorem{lemm}[theo]{Lemma}

\theoremstyle{definition}
\newtheorem{defi}[theo]{Definition}
\newtheorem*{note*}{Note}
\newtheorem*{claim*}{Claim}
\newtheorem*{exam*}{Example}
\newtheorem*{rema*}{Remark}
\newtheorem{assu}[theo]{Assumption}
\newtheorem{notation}{Notation}

\author{Shota FUKUSHIMA\thanks{Graduate School of Mathematical Sciences, the University of Tokyo, 3-8-1 Komaba, Meguro-ku, Tokyo, 153-8914, Japan. 
Email: fukusima@ms.u-tokyo.ac.jp} 
\thanks{The author is supported by Leading Graduate Course for Frontiers of Mathematical Sciences and Physics (FMSP), at Graduate School of Mathematical Science, the University of Tokyo. }}
\title{Semiclassical pseudodifferential operators and resolvent parametrices on manifolds with ends}

\newcommand{\transp}[1]{{}^t\!{#1}}

\newcommand{\vol}{\mathrm{vol}}
\newcommand{\jbracket}[1]{\left\langle {#1} \right\rangle}

\newcommand{\rmop}[1]{\mathop{\mathrm{#1}}}

\newcommand{\diff}{\mathrm{d}}
\newcommand{\diffrac}[2][]{\frac{\diff #1}{\diff #2}}

\newcommand{\nni}{\mathbb{Z}_{\geq 0}}
\newcommand{\Op}{\mathrm{Op}}

\newcommand{\locsymb}[2]{{\widetilde S}^{#1}_{#2}}

\newcounter{step}
\setcounter{step}{0}

\newcommand{\fstep}[1]{
    \par\vspace{2mm}
    \noindent\textbf{#1.}
}

\newcounter{enumpartcounter}

\numberwithin{equation}{section}

\begin{document}
\maketitle

\begin{abstract}
    We construct a parametrix of a resolvent of elliptic differential operators acting on half-densities on manifolds with ends. The construction is carried out by introducing suitable pseudodifferential operators compatible with the end structure. Our class of pseudodifferential operators and symbols is independent of the choice of Riemannian metric on the manifold and applicable to both of asymptotically conical and hyperbolic manifolds. As an application, we prove the essential self-adjointness of elliptic symmetric differential operators on manifolds. 
    \end{abstract}

    \section{Introduction}\label{def_manifold_with_ends}
Let $M$ be an $n$ dimensional non-compact manifold with the following assumption. 
\begin{assu}\label{assu_manifold_with_end_III}
    There exist an open subset $E$ of $M$, a compact manifold $S$ with dimension $n-1$ and a diffeomorphism $\Psi: E \to \mathbb{R}_+ \times S$ such that $M\setminus \Psi^{-1}((1, \infty)\times S)$ is compact. Here we denote $\mathbb{R}_+:=(0, \infty)$. The set $E$ is called \textit{ends} of $M$. 
\end{assu}

In the following, we fix the mapping $\Psi: E \to \mathbb{R}_+$ in Assumption \ref{assu_manifold_with_end_III}. 
If $E$ is diffeomorphic to $\mathbb{R}_+\times S$, we can consider the ``\textit{polar coordinates}'' $(r, \theta)\in (1, \infty)\times S$ using the local coordinates $\theta=(\theta_1, \ldots, \theta_{n-1})$ on $S$. 
This is defined as the image of local coordinates introduced below. We take an atlas on $ M $ which consists of finite local coordinates $\{\varphi_\iota:  U_\iota \to  V_\iota\}_{\iota\in I}$ in the following way.  

\begin{enumerate}
\item We take a finite atlas $\{\varphi_\iota:  U_\iota\to  V_\iota\}_{\iota\in I_ K }$ on $M\setminus E$ where 
$U_\iota$ and $V_\iota$ are open subsets of $M$ and $\mathbb{R}^n$ respectively. 
\item Recall the diffeomorphism $\Psi: E \to \mathbb{R}_+\times S$. Since $S$ is a compact manifold, we can take a finite atlas $\{\varphi_\iota^\prime:  U_\iota^\prime\to  V_\iota^\prime\}_{\iota\in I_\infty}$ on $S$. Here $U_\iota^\prime$ and $ V_\iota^\prime$ are open subsets of $S$ and $\mathbb{R}^{n-1}$ respectively. We set $U_\iota:=\Psi^{-1}(\mathbb{R}_+\times U_\iota^\prime)$, $V_\iota:=\mathbb{R}_+\times V_\iota^\prime$ and 
\begin{equation}\label{eq_polar_defi_III}
    \varphi_\iota:=(\mathrm{id}\times \varphi_\iota^\prime)\circ \Psi:  U_\iota\longrightarrow  V_\iota.
\end{equation} 
\item Finally we put $I:=I_ K \cup I_\infty$. Here we take $I_K$ and $I_\infty$ such that $I_K\cap I_\infty=\varnothing$. Obviously $I$ is a finite set. 
\end{enumerate}

We introduce a class of functions invariant of the action of normalized differential operators $\partial_r$ and $f(r)^{-1}\partial_\theta$ in a metric $\diff r^2+f(r)^2 h(\theta, \diff \theta)$. 

\begin{assu}\label{assu_f}
    We fix a smooth function $f:  \mathbb{R} \to [1, \infty)$ such that 
    \begin{equation}\label{enum_f}
        \partial_r^j \log f\in L^\infty (\mathbb{R})
    \end{equation} 
    for all $j\geq 1$. 
    \end{assu}

\begin{defi}\label{defi_class_polar}
    Let $V^\prime$ be an open subset of $\mathbb{R}^{n-1}$. A function $a\in C^\infty (\mathbb{R}_+ \times V^\prime)$ belongs to the class $\mathcal{B}_f (\mathbb{R}_+ \times V^\prime)$ if
    \[
(f(r)^{-1}\partial_\theta)^{\alpha^\prime} \partial_r^{\alpha_0} a(r, \theta)\in L^\infty( [R, \infty)\times K)
\]
for all $R>0$, all compact subsets $K\subset V^\prime$ and all multiindices $\alpha=(\alpha_0, \alpha^\prime)\in \nni^n$. 
\end{defi}

Our main result is that we can construct parametrices of elliptic differential operators on $M$. In this paper, we consider (pseudo)differential operators acting on half-densities. The definition of half-densities in Chapter 14 in \cite{Zworski12} for example. Let $P_\hbar : C^\infty(M; \Omega^{1/2})\to C^\infty (M; \Omega^{1/2})$ be a semiclassical differential operator acting on half-densities on $M$. Here $C^\infty (M; \Omega^{1/2})$ is the space of all smooth half-densities on $M$. We assume that $P_\hbar$ is of the form 
\begin{equation}
    \begin{aligned}
        &\varphi_{\iota*} P_\hbar  \varphi_\iota^*(u|\diff r \diff \theta|^{1/2}) \\
        &=\sum_{|\alpha|\leq m}p^\iota_\alpha (\hbar; r, \theta)(f(r)^{-1}\hbar D_\theta)^{\alpha^\prime} (\hbar D_r)^{\alpha_0}u (r, \theta)|\diff r \diff \theta|^{1/2}
    \end{aligned}
 \label{eq_P_local}
\end{equation}
with 
\[
    p^\iota_\alpha (\hbar; r, \theta)=\sum_{j=0}^{N_\alpha}\hbar^j p^\iota_{\alpha, j}(r, \theta)
\]
for some $N_\alpha\in \nni$ on the polar coordinate neighborhood $U_\iota$ ($\iota\in I_\infty$) where $a^\iota_\alpha\in \mathcal{B}_f(V_\iota)$. Here we employed the notation 
\[
    D_r=-i\partial_r, \, D_{\theta_j}=-i\partial_{\theta_j}, \, D_\theta^{\alpha^\prime}=D_{\theta_1}^{\alpha^\prime_1}\cdots D_{\theta_{n-1}}^{\alpha^\prime_{n-1}}, \, \alpha=(\alpha_0, \alpha^\prime), \, |\alpha|=\alpha_0+|\alpha^\prime|. 
\]
We denote the set consisting of such differential operator $P_\hbar$ by $\mathrm{Diff}_{f, \hbar}^m(M; \Omega^{1/2})$. The principal symbol $\sigma(P_\hbar): T^* M \to \mathbb{C}$ of $P_\hbar$ is defined as 
\[
\sigma(P_\hbar)(\tilde \varphi_\iota^{-1}(r, \theta, \rho, \eta))
:=\sum_{|\alpha|\leq m}p^\iota_\alpha (r, \theta)(f(r)^{-1}\eta)^{\alpha^\prime} \rho^{\alpha_0} 
\]
if $\varphi_{\iota*} P_\hbar  \varphi_\iota^*$ is given by \eqref{eq_P_local}. Here $\tilde \varphi_\iota: T^* U_\iota\to  T^*V_\iota$ is the canonical coordinates associated with $ \varphi_\iota:  U_\iota\to  V_\iota$. 
We call $ P_\hbar \in \mathrm{Diff}^m_{f, \hbar} (M; \Omega^{1/2})$ with $\sigma (P_\hbar)(T^*M)\neq \mathbb{C}$ elliptic if, for all $z\in\mathbb{C}$ with $\mathrm{dist}(z, \sigma( P_\hbar )(T^* M ))>0$, there exists a constant $C>0$ such that the inequality
\[
C^{-1}(1+|\rho|+f(r)^{-1}|\eta|)^m\leq |z-\sigma( P_\hbar )(r, \theta, \rho, \eta)| \leq C(1+|\rho|+f(r)^{-1}|\eta|)^m
\]
holds in polar coordinates, and similar condition holds on $M\setminus E$. The precise definition is in Definition \ref{def_elliptic_M}. 

We construct a parametrix of the resolvent $(z-P_\hbar)^{-1}$ of an elliptic differential operator $P_\hbar$. In order to construct parametrices, we introduce a suitable class of pseudodifferential operators in order to investigate such differential operators in polar coordinates. 

\begin{notation}
We introduce shorthand notations 
\[
q=(r, \theta)\in \mathbb{R}^n=\mathbb{R}\times \mathbb{R}^{n-1}
\]
and for the dual variable $(\rho, \eta)\in \mathbb{R}\times\mathbb{R}^{n-1}$ 
\[
p=(\rho, \eta)=\rho\oplus\eta\in \mathbb{R}^n=\mathbb{R}\times\mathbb{R}^{n-1}. 
\]
We also introduce notations
\[
\jbracket{p}:=(1+|p|^2)^{1/2}
\]
and
\[
\jbracket{\rho\oplus\eta}:=(1+\rho^2+|\eta|^2)^{1/2}. 
\]

As in \eqref{eq_P_local}, we always split a multiindex as $\alpha=(\alpha_0, \alpha^\prime)\in \nni\times\nni^{n-1}$. The length $|\alpha|$ is defined as
\[
|\alpha|=\alpha_0+|\alpha^\prime|=\alpha_0+\alpha_1+\cdots +\alpha_{n-1}
\]
if $\alpha^\prime=(\alpha_1, \ldots. \alpha_{n-1})$. 
We denote
\[
\partial_q^\alpha=\partial_r^{\alpha_0}\partial_\theta^{\alpha^\prime}, \, \partial_p^\beta=\partial_\rho^{\beta_0}\partial_\eta^{\beta^\prime}. 
\]
\end{notation}

We define a symbol class on manifolds with ends as follows. 

\begin{defi}\label{defi_symbol_class_manifold}
    Let $m$ be a real number. A smooth function $a\in C^\infty (T^*M)$ belongs to $S^m_f (T^*M)$ if the following two conditions hold. 
    \begin{itemize}
        \item For $\iota\in I_K$ and a compact subset $\Gamma_\iota\subset V_\iota$, the estimate 
        \[
            |a|_{S^m_f (T^*M), \iota, \Gamma_\iota, N}:=\sum_{|\alpha|+|\beta|\leq N}\sup_{(x, \xi)\in \Gamma_\iota\times \mathbb{R}^n}\jbracket{\xi}^{-m+|\beta|}|\partial_x^\alpha \partial_\xi^\beta \tilde\varphi_{\iota*} a(x, \xi)|<\infty 
        \]
        holds for all $N\in \nni$. 
        \item For $\iota\in I_\infty$, $\Gamma_\iota=[R_\iota, \infty)\times K^\prime_\iota$ where $R_\iota>0$ and $K^\prime_\iota\subset V^\prime_\iota$ is a compact subset, the estimate 
        \begin{align*}
            &|a|_{S^m_f (T^*M), \iota, \Gamma_\iota, N} \\
            &:=\sum_{|\alpha|+|\beta|\leq N}\sup_{(q, p)\in \Gamma_\iota \times \mathbb{R}^n}\frac{f(r)^{-|\alpha^\prime|+|\beta^\prime|}|\partial_q^\alpha \partial_p^\beta \tilde\varphi_{\iota*} a(q, p)|}{\jbracket{\rho \oplus f(r)^{-1}\eta}^{m-|\beta|}} \\
            &<\infty
        \end{align*}
        holds for all $N\in \nni$. 
    \end{itemize}
    We fix a collection of subsets $\{\Gamma_\iota\}_{\iota\in I}$ in the above conditions such that $\bigcup_{\iota\in I} \varphi^{-1}(\Gamma_\iota)=M$. Then we define seminorms on $S^m_f (T^*M)$ as 
    \[
        |a|_{S^m_f (T^*M), N}:=\sum_{\iota\in I} |a|_{S^m_f (T^*M), \iota, \Gamma_\iota, N}. 
    \]
\end{defi}

\begin{rema*}
    Our symbol class $S^m_f (T^*M)$ is equivalent to $S(\jbracket{\rho\oplus f(r)^{-1}\eta}^m, \tilde g_\sigma)$, where 
    \[
    \tilde g_\sigma:=dr^2+f(r)^2\sum_{j=1}^{n-1} (d\theta_j)^2+\jbracket{\rho\oplus f(r)^{-1}\eta}^{-2}\left(d\rho^2+f(r)^{-2}\sum_{j=1}^{n-1} (d\eta_j)^2\right), 
    \]
    in H\"ormander's notation \cite{Hormander85-3}. 
    Since we permit the exponential increase for $f$, the metric $\tilde g_\sigma$ is not necessarily \textit{temperate}, that is, roughly speaking, the coefficients of the metric tensor are at most polynomially increasing. On the theory of pseudodifferential operators on manifolds based on H\"ormander's symbol class, we refer to the works of Baldus \cite{Baldus03a,Baldus03b}. 
    \end{rema*}

As in the case of Euclidean spaces or compact manifolds, we construct a symbol $b(z)$ of the form 
\[
    b(z)(x, \xi)\sim (z-\sigma (P_\hbar)(x, \xi))^{-1}+\hbar b_1 (z)+\cdots
\]
and quantize it to obtain a pseudodifferential operator $\Op^1_{M, \hbar}(b(z))$. We will describe the precise definition of $\Op^1_{M, \hbar}$ in Section \ref{subs_psido_manifolds}. Roughly speaking, we quantize symbols in $S^m_f (T^*M)$ as 
\[
    a \mapsto \frac{1}{(2\pi\hbar)^n}\int_{\mathbb{R}^n} a(r, \theta, \rho, \eta)e^{i\rho (r-r^\prime)/\hbar+i\eta\cdot (\theta-\theta^\prime)/\hbar}\, \diff \rho \diff \eta |\diff r \diff \theta|^{1/2}|\diff r^\prime \diff \theta^\prime|^{1/2}
\]
in polar coordinates. 

Now we state the existence of the parametrix of the resolvent. In the statement below, $C_c^\infty (M; \Omega^{1/2})$ and $L^2(M; \Omega^{1/2})$ are the space of compactly supported smooth half-densities and the $L^2$ space of half-densities on $M$ respectively. 

\begin{theo}\label{theo_parametrix}
Let $P_\hbar \in \mathrm{Diff}^m_{f, \hbar} (M; \Omega^{1/2})$ be an elliptic differential operator. Then we can construct a symbol $b(z)\in S^{-m}_f(T^*M)$ such that 
\[
(z- P_\hbar ) \Op^1_{M, \hbar}(b(z))u=u+ R_\hbar (z)u
\]
for all $u\in C_c^\infty( M; \Omega^{1/2})$ and $z\in\mathbb{C}$ with $\mathrm{dist}(z, \sigma( P_\hbar )(T^* M ))>0$. $ R_\hbar (z)$ are bounded operators on $L^2(M; \Omega^{1/2})$ and 
\[
    \|R_\hbar (z)\|_{L^2\to L^2}=O(\hbar^\infty)
\]
(not uniformly in $z$). 
\end{theo}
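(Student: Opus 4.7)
The plan is to imitate the classical parametrix construction of Hörmander adapted to the end-structured symbol class $S^m_f(T^*M)$. I first define the leading symbol
\[
b_0(z)(x,\xi):=(z-\sigma(P_\hbar)(x,\xi))^{-1}.
\]
The ellipticity assumption gives the two-sided estimate $|z-\sigma(P_\hbar)|\asymp \jbracket{\rho\oplus f(r)^{-1}\eta}^m$ on polar charts (and the analogous bound on $M\setminus E$). Consequently $|b_0(z)|\lesssim \jbracket{\rho\oplus f(r)^{-1}\eta}^{-m}$, and by Faà di Bruno applied to $w\mapsto(z-w)^{-1}$ each derivative $\partial_q^\alpha\partial_p^\beta b_0(z)$ is a finite sum of products of $\partial_q\partial_p$-derivatives of $\sigma(P_\hbar)$ divided by powers of $(z-\sigma(P_\hbar))$. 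The structural assumption on $P_\hbar$ with coefficients in $\mathcal{B}_f$, together with the $f(r)^{-|\alpha'|+|\beta'|}$ weights that are built into the definition of $S^m_f$, ensures $\sigma(P_\hbar)\in S^m_f$; hence the derivative estimates close up and $b_0(z)\in S^{-m}_f(T^*M)$.

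Next I would apply the semiclassical composition formula for $\Op^1_{M,\hbar}$ (which should be established in the pseudodifferential calculus section of the paper, with symbol $a\#b\sim\sum_{\alpha}(\hbar/i)^{|\alpha|}/\alpha!\,\partial_p^\alpha a\,\partial_q^\alpha b$ in each chart). Applied to $(z-P_\hbar)\circ \Op^1_{M,\hbar}(b_0(z))$ this yields
\[
(z-P_\hbar)\Op^1_{M,\hbar}(b_0(z))=\Op^1_{M,\hbar}(1-\hbar\, r_1(z))
\]
with $r_1(z)\in S^{-1}_f(T^*M)$, uniformly on compact subsets of $z\in\mathbb{C}\setminus\sigma(P_\hbar)(T^*M)$. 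I then inductively set
\[
b_{j}(z):=b_0(z)\cdot r_1(z)\#\cdots \#r_1(z)\ \text{(appropriate $j$-fold correction)},
\]
or, equivalently, solve the transport-type equation
\[
(z-\sigma(P_\hbar))b_j=c_j,\qquad c_j\in S^{-j}_f(T^*M)
\]
recursively, obtaining $b_j(z)\in S^{-m-j}_f(T^*M)$. After $N$ steps the composition is $\Op^1_{M,\hbar}(1)+\hbar^{N+1}\Op^1_{M,\hbar}(s_{N+1}(z))$ with $s_{N+1}(z)\in S^{-(N+1)}_f(T^*M)$.

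A Borel-type summation procedure in the class $S^{-m}_f$ (applied chart by chart with a partition of unity and cut-offs in the fibre variable, choosing a rapidly decaying sequence $\lambda_j\downarrow 0$ so that $\sum_j\chi(\lambda_j p)\hbar^j b_j(z)$ converges) produces $b(z)\in S^{-m}_f(T^*M)$ with $b(z)\sim\sum_{j\geq 0}\hbar^j b_j(z)$, and the residual symbol after truncation at order $N$ lies in $\hbar^{N+1}S^{-(N+1)}_f$. Thus
\[
(z-P_\hbar)\Op^1_{M,\hbar}(b(z))=I+R_\hbar(z)
\]
with $R_\hbar(z)=\Op^1_{M,\hbar}(\tilde r_\hbar(z))$ and $\tilde r_\hbar(z)\in \hbar^\infty S^{-\infty}_f$. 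The claimed bound $\|R_\hbar(z)\|_{L^2\to L^2}=O(\hbar^\infty)$ then follows from the $L^2$-boundedness of operators quantizing symbols in $S^0_f(T^*M)$, applied to $\hbar^{-N}\tilde r_\hbar(z)\in S^{-N}_f\subset S^0_f$ for any $N$.

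The principal technical obstacle is not the formal recursion but the verification, at each step, that the corrections remain in $S^{-m-j}_f$: because the metric $\tilde g_\sigma$ is non-temperate (the weight $f$ may grow exponentially), one cannot quote Hörmander's general Weyl calculus directly, and one must check carefully that the composition formula, the asymptotic Borel summation, and the $L^2$-boundedness statement all remain valid in the class $S^m_f(T^*M)$; this uses crucially the $\mathcal{B}_f$-structure of the coefficients of $P_\hbar$ and the precise way the $f(r)$-weights enter the seminorms, as well as compatibility of the polar and compact charts through the partition of unity.
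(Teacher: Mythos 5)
Your proposal follows essentially the same route as the paper: define $b_0(z)=(z-\sigma(P_\hbar))^{-1}$, check via ellipticity and the chain rule that it lies in $S^{-m}_f(T^*M)$ (the paper's Proposition~\ref{prop_symb_parametrix}), then iterate a composition step to kill successive $\hbar$-orders of the error, Borel-sum the resulting expansion, and close with the $L^2$-boundedness theorem for $S^0_f$ symbols. One caveat worth flagging: the paper never proves a general composition formula $\Op^1_{M,\hbar}(a)\Op^1_{M,\hbar}(b)=\Op^1_{M,\hbar}(a\#b)$ for two arbitrary symbols in $S^m_f$ --- and since the metric $\tilde g_\sigma$ is non-temperate and off-diagonal kernel decay is unavailable, such a formula would require an additional argument not in the paper. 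What the paper does establish (Theorem~\ref{theo_diff_psido_composition}) is the composition of a \emph{differential} operator in $\mathrm{Diff}^{m_1}_{f,\hbar}$ with a pseudodifferential operator, which is all the recursion needs since $z-P_\hbar$ is differential. Your alternative formulation of the recursion --- solving the pointwise symbol equation $(z-\sigma(P_\hbar))\,b_j=c_j$, i.e.\ $b_j=-e_j\cdot(z-\sigma(P_\hbar))^{-1}$ --- is exactly the paper's construction and avoids any appeal to a full $\#$-calculus, so the argument closes; the $b_0\cdot r_1\#\cdots\#r_1$ variant should be dropped.
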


We remark that our pseudodifferential operators are not necessarily properly supported. We can find the microlocal analysis on asymptotically hyperbolic manifolds by properly supported pseudodifferential operators in Bouclet \cite{Bouclet11-1} for instance. He used them in order to prove the Strichartz estimates on manifolds with asymptotically hyperbolic ends \cite{Bouclet11-2}.

The difficulty in deducing global properties of our pseudodifferential operators is that it is unknown whether we can obtain a sufficiently rapid off-diagonal decay of the integral kernel of them in the case of $f$ with exponential increasing. Despite this difficulty, we can prove and use the Calder\'on-Vaillancourt type $L^2$ boundedness theorem \cite{Calderon-Vaillancourt71} for example. We prove the $L^2$ boundedness theorem for bounded symbols in Theorem \ref{theo_L2_bdd_simplest} by introducing a scaling. This enables us to avoid the argument of off-diagonal decay of integral kernels. 

Although our definition (pseudo)differential operators are not directly related to Riemannian metrics on $M$, many important examples of differential operators are defined via Riemannian metrics. We will describe a condition of Riemannian metric on $M$ in Assumption \ref{assu_metric_M} in Section \ref{subs_ess_selfadj}. Assumption \ref{assu_metric_M} is related to manifolds with bounded geometry. A complete Riemannian manifold has bounded geometry if and only if it has the positive injectivity radius and its Riemannian curvature tensor and its covariant derivative are globally bounded. Analysis on manifolds with bounded geometry is studied by Engel \cite{Engel18}, Gro{\ss}e and Schneider \cite{Grosse-Schneider13}, and Shubin \cite{Shubin92}. The study by Ammann, Gro{\ss}e and Nistor \cite{Ammann-Grosse-Nistor19} employed the analysis on manifolds with bounded geometry to that on singular domains. 

There are many studies of analysis by embedding the manifold with ends into a compact manifold with boundary and identifying the infinity of the manifold with ends with the boundary of the compact manifold. Melrose \cite{Melrose95} applied this concept to the construction of the geometric scattering theory. On the asymptotic hyperbolic ends in this formulation, we refer to Mazzeo and Melrose \cite{Mazzeo-Melrose87} and Melrose, S\'{a} Baretto and Vasy \cite{Melrose-SaBaretto-Vasy14}. A generalization to the case of variable curvature at infinity is studied in S{\'a} Barreto and Wang \cite{SaBarreto-Wang16}. Another, but related approach is that one construct pseudodifferential calculi on manifolds with a Lie structure at infinity, which are also called Lie manifolds. In this formulation, the non-compact manifold is embedded into a compact manifold with corners. Pseudodifferential calculus on Lie manifolds is studied by Ammann, Lauter and Nistor \cite{Ammann-Lauter-Nistor07} and Nistor \cite{Nistor16}. There is also a formulation on manifolds with fibered corners, which is studied by Debord, Lescure and Rochon \cite{Debord-Lescure-Rochon15}. 
    
Coordinate-free definition of pseudodifferential operators on manifolds is also investigated recently. We can find some of them in Derezi\'nski, Latosi\'nski and Siemssen \cite{Derezinski-Latosinski-Siemssen20}, Levy \cite{Levy} and McKeag and Safarov \cite{McKeag-Safarov11}. In order to realize the coordinate-free definition, they define the parts of $tx+(1-t)y$ and $\xi \cdot (x-y)$ in the definition of pseudodifferential operators 
\[
    u(x) \mapsto \frac{1}{(2\pi)^n}\int_{\mathbb{R}^{2n}} a(tx+(1-t)y, \xi) e^{i\xi \cdot (x-y)}u(y)\, \diff y \diff \xi
\]
in terms of the exponential map which is associated with a connection \cite{Derezinski-Latosinski-Siemssen20,McKeag-Safarov11} or directly equipped the manifold with as in \cite{Levy}. 

This paper is organized as follows. We establish a theory of pseudodifferential operators in polar coordinates in Section \ref{sec_bisymbol}. In Section \ref{sec_ess_selfadj}, we define pseudodifferential operators on manifolds and we investigate a composition of differential operators and pseudodifferential operators. In particular, we prove the main theorem (Theorem \ref{theo_parametrix}) in Section \ref{subs_proof_main} and we investigate the essential self-adjointness of symmetric differential operators in Section \ref{subs_ess_selfadj}.

\section{Pseudodifferential operators in polar coordinates}\label{sec_bisymbol}

\subsection{Definition and fundamental properties}

We introduce a symbol class in local coordinates. 

\begin{defi}\label{defi_symbol_class}
    We define symbol classes $\locsymb{m}f$ by the following condition: 
a smooth function $a\in C^\infty(\mathbb{R}^{2n})$ is in $\locsymb{m}f$ if, for any integer $N\geq 0$, there exists a constant $C>0$ such that 
\[
|(f(r)^{-1}\partial_\theta)^{\alpha^\prime} (f(r)\partial_\eta)^{\beta^\prime} \partial_r^{\alpha_0} \partial_\rho^{\beta_0}a(q, p)|\leq C\jbracket{\rho\oplus f(r)^{-1}\eta}^{m-|\beta|}
\]
for all $(q, p)=\mathbb{R}^{2n}$ and multiindices $\alpha, \beta\in \nni^n$ with $|\alpha|+|\beta|\leq N$. 
The seminorms $|a|_{\locsymb{m}{f}, N}$ are defined as
\begin{align*}
|a|_{\locsymb{m}{f}, N} 
:=
\sum_{|\alpha|+|\beta|\leq N}\left\|\frac{(f(r)^{-1}\partial_\theta)^{\alpha^\prime} (f(r)\partial_\eta)^{\beta^\prime} \partial_r^{\alpha_0} \partial_\rho^{\beta_0}a}{\jbracket{\rho\oplus f(r)^{-1}\eta}^{m-|\beta|}}\right\|_{L^\infty(\mathbb{R}^{2n})}. 
\end{align*}
\end{defi}

Next we define pseudodifferential operators with symbols in $\locsymb{m}f$. We regard the Euclidean space $\mathbb{R}^n$ as a product of radial variable $r\in \mathbb{R}$ and angular variable $\theta\in \mathbb{R}^{n-1}$. 

\begin{defi}
    Fix parameters $t\in [0, 1]$ and $m\in \mathbb{R}$. For a symbol $a\in \locsymb{m}f$, we define a pseudodifferential operator $\Op^t_\hbar (a)$ with symbol $a$ as  
\begin{align*}
&\Op^t_\hbar (a)(v(q)|\diff q|^{1/2}) \\
&:=\frac{1}{(2\pi\hbar)^n}\int_{\mathbb{R}^{2n}}a\left(tq+(1-t)q^\prime, p\right)e^{ip\cdot (q-q^\prime)/\hbar}v(q^\prime)\, \diff q^\prime \diff p |\diff q|^{1/2}. 
\end{align*}
\end{defi}

We denote the class of smooth half-densities by $C^\infty (M; \Omega^{1/2})$ and that of compactly supported smooth half-densities by $C_c^\infty (M; \Omega^{1/2})$. We first prove the smoothness of $\mathrm{Op}^t_\hbar (a)u$ for $u\in C_c^\infty(\mathbb{R}^n; \Omega^{1/2})$. 

\begin{prop}\label{prop_smoothness}
If $a\in \locsymb{m}{f}$, then $\Op^t_\hbar  (a)$ defines a continuous linear operator from $C_c^\infty(\mathbb{R}^n; \Omega^{1/2})$ to $C^\infty(\mathbb{R}^n; \Omega^{1/2})$. 
\end{prop}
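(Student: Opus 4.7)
The plan is a direct adaptation of the classical oscillatory integral argument on $\mathbb{R}^n$. The key observation is that on any compact $q$-set the weight $f(r)$ is bounded and the symbol class $\locsymb{m}{f}$ reduces to the ordinary Kohn--Nirenberg class $S^m_{1,0}$, so the standard theory applies.

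First we reduce to compact sets. Since smoothness is a local property in $q$ and $v$ is compactly supported, we fix a compact $K\subset\mathbb{R}^n$ in which $q$ varies and set $K':=\mathrm{supp}\,v$. The midpoint $tq+(1-t)q'$ then lies in $K'':=tK+(1-t)K'$, whose $r$-projection is bounded, so by Assumption \ref{assu_f} every derivative of $f(r)$ is bounded on $K''$. Since $f\ge 1$, we also have $\jbracket{\rho\oplus f(r)^{-1}\eta}\asymp\jbracket{p}$ uniformly on $K''\times\mathbb{R}^n$, and the defining seminorms of $\locsymb{m}{f}$ specialize on $K''$ to ordinary symbol-class estimates
\[
\bigl|\partial_q^\alpha\partial_p^\beta\,a\bigl(tq+(1-t)q',\,p\bigr)\bigr|\le C_{\alpha\beta}\,\jbracket{p}^{m-|\beta|},\qquad (q,q')\in K\times K',\ p\in\mathbb{R}^n,
\]
with constants controlled by $|a|_{\locsymb{m}{f},|\alpha|+|\beta|}$ and by $K,K'$.

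Next we regularize the oscillatory integral by integration by parts. The operator $L:=(1+|p|^2)^{-1}(1-\hbar^2\Delta_{q'})$ fixes $e^{ip\cdot(q-q')/\hbar}$, so for any $N\in\nni$, since $v\in C_c^\infty$ kills boundary terms, we rewrite
\[
\Oph^t(a)(v|\diff q|^{1/2})=\frac{|\diff q|^{1/2}}{(2\pi\hbar)^n}\int_{\mathbb{R}^{2n}} e^{ip\cdot(q-q')/\hbar}\, L^N\bigl[a(tq+(1-t)q',p)\,v(q')\bigr]\,\diff q'\diff p.
\]
Choosing $2N>m+n$ makes the integrand absolutely integrable uniformly for $q\in K$, so $\Oph^t(a)(v|\diff q|^{1/2})$ is well-defined and continuous in $q$. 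Smoothness follows by differentiating under the integral: each $\partial_q$ either falls on $a(tq+(1-t)q',p)$, yielding a derivative of $a$ of the same type, or it brings down a factor $ip/\hbar$ that grows polynomially in $p$; both effects are absorbed by taking $N$ larger in the integration by parts above. Dominated convergence legitimizes the differentiation, and the resulting bound
\[
\|\Oph^t(a)(v|\diff q|^{1/2})\|_{C^k(K)}\le C_{K,k}\,|a|_{\locsymb{m}{f},\,N_0}\,\|v\|_{C^{N_1}(K')}
\]
simultaneously gives $\Oph^t(a)v\in C^\infty(\mathbb{R}^n;\Omega^{1/2})$ and continuity of the map $C_c^\infty\to C^\infty$.

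No substantial obstacle appears at this stage. The potentially exponential growth of $f$ plays no role inside a compactly supported calculation, so the proof amounts to a routine reduction to the standard H\"ormander oscillatory integral machinery. The genuinely new difficulties caused by unbounded $f$ surface only in the global statements later in the paper, such as the $L^2$-boundedness theorem and the parametrix construction.
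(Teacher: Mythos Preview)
Your proof is correct and follows essentially the same approach as the paper: restrict $q$ to a compact set, observe that on the resulting compact $(q,q')$-region the weight $f(r)$ is harmless so that $a$ satisfies ordinary $S^m_{1,0}$-type bounds, and then regularize the oscillatory integral by integration by parts in $q'$ to gain enough decay in $p$. The only cosmetic difference is the choice of regularizing operator --- you use the second-order $(1-\hbar^2\Delta_{q'})/\jbracket{p}^2$, while the paper uses the first-order $(1+ip\cdot\partial_{q'})/\jbracket{p}^2$ --- and you are somewhat more explicit than the paper about why the symbol bounds localize to standard ones on compact sets.
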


\begin{proof}
    Let $u=v|\diff q|^{1/2}\in C_c^\infty (\mathbb{R}^n; \Omega^{1/2})$. We take an arbitrary relatively compact subset $K\subset \mathbb{R}^n$ and restrict $q=(r, \theta)$ to $K$. Noting $u\in C_c^\infty (\mathbb{R}^n; \Omega^{1/2})$, we apply integration by parts for the integration by $q^\prime$ employing $(1+ip\cdot \partial_{q^\prime})/\jbracket{p}^2$. Then, for any integer $N\geq 0$, there exists a constant $C>0$ and an integer $N^\prime$ such that the estimate 
    \begin{equation}\label{eq_d_to_e}
        \left|\int_{\mathbb{R}^n} a(tq+(1-t)q^\prime, p)e^{-ip\cdot q^\prime/\hbar}v(q^\prime)\, \diff q^\prime\right| 
        \leq C \jbracket{p}^{-N}\sum_{|\alpha|\leq N^\prime} \| \partial_q^\alpha v\|_{L^\infty(\mathbb{R}^n)}
    \end{equation}
    holds for all $q\in K$. Thus we can apply differentiation under integral and obtain the smoothness of $\Op^t_\hbar  (a)u$. The continuity of $\Op^t_\hbar  (a)$ also follows from \eqref{eq_d_to_e}. 
\end{proof}

Noting that $\locsymb{m}{f}$ is included in the space of tempered distributions $\mathscr{S}^\prime (\mathbb{R}^{2n})$ by $\inf f(r)\geq 1$, we state a continuity of expectation value. 

\begin{prop}\label{prop_expect}
    Fix compactly supported half-densities $u, v\in C_c^\infty (M; \Omega^{1/2})$. Then, if a sequence $\{ a_j\}_{j=1}^\infty \subset \locsymb{m}{f}$ converges to a symbol $a\in \locsymb{m}{f}$ in $\mathscr{S}^\prime (\mathbb{R}^{2n})$, then we have 
    \begin{equation}\label{eq_limit_expectation}
        \lim_{j\to \infty}\jbracket{ \Op^t_\hbar  (a_j)u, v}=\jbracket{\Op^t_\hbar  (a)u, v}. 
    \end{equation}
\end{prop}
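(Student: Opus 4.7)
My strategy is to realize the expectation value $\langle\Op^t_\hbar(a)u,v\rangle$ as the pairing of the tempered distribution $a$ against a Schwartz function depending only on $u,v,t,\hbar$; the conclusion \eqref{eq_limit_expectation} is then immediate from the definition of convergence in $\mathscr{S}^\prime(\mathbb{R}^{2n})$.  Writing $u=u_0|\diff q|^{1/2}$ and $v=v_0|\diff q|^{1/2}$, the natural candidate, obtained from the defining oscillatory integral by the linear change of variables $(q,q^\prime)\mapsto (x,y):=(tq+(1-t)q^\prime,\,q-q^\prime)$ (whose Jacobian has absolute value $1$) followed by a formal $y$-integration, is
\[
\Phi^t_{u,v}(x,p):=\frac{1}{(2\pi\hbar)^n}\int_{\mathbb{R}^n} e^{ip\cdot y/\hbar}\,u_0(x-ty)\,\overline{v_0(x+(1-t)y)}\,\diff y.
\]
The proposition then reduces to the two assertions $\Phi^t_{u,v}\in\mathscr{S}(\mathbb{R}^{2n})$ and
\[
\langle\Op^t_\hbar(a)u,v\rangle=\langle a,\Phi^t_{u,v}\rangle_{\mathscr{S}^\prime,\mathscr{S}}\qquad (a\in\locsymb{m}{f}).
\]

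For the Schwartz property, the compact supports of $u_0,v_0$ force the integrand to vanish outside the compact $x$-set $(1-t)\operatorname{supp}u_0+t\operatorname{supp}v_0$; for each such $x$, the identity $y=(x+(1-t)y)-(x-ty)$ confines the effective $y$-domain to the compact set $\operatorname{supp}v_0-\operatorname{supp}u_0$.  Repeated integration by parts in $y$ using $(1-\hbar^2\Delta_y)e^{ip\cdot y/\hbar}=\jbracket{p}^2 e^{ip\cdot y/\hbar}$ yields rapid decay in $p$, and the same mechanism applies after differentiating in $x$, so $\Phi^t_{u,v}\in\mathscr{S}(\mathbb{R}^{2n})$.

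For the pairing identity, the case $a\in\mathscr{S}(\mathbb{R}^{2n})$ follows directly from Fubini and the change of variables described above.  To extend it to all $a\in\locsymb{m}{f}$, I would approximate by $a_\epsilon(q,p):=\chi(\epsilon q,\epsilon p)\,a(q,p)$ with $\chi\in C_c^\infty(\mathbb{R}^{2n})$, $\chi(0)=1$.  Then $a_\epsilon\in C_c^\infty\subset\mathscr{S}(\mathbb{R}^{2n})$, so the identity holds for $a_\epsilon$; moreover $a_\epsilon\to a$ in $\mathscr{S}^\prime(\mathbb{R}^{2n})$ by dominated convergence against any Schwartz test function, which controls the right-hand side.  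For the left-hand side I would use the integration-by-parts representation from the proof of Proposition~\ref{prop_smoothness} (insertion of $\jbracket{p}^{-2N}(1-\hbar^2\Delta_{q^\prime})^N$ for $N$ large) to write $\Op^t_\hbar(a_\epsilon)u(q)$ as an absolutely convergent integral in $(q^\prime,p)$.  A Leibniz expansion of $\partial_{q^\prime}^{\leq 2N}[\chi(\epsilon\cdot)a(tq+(1-t)q^\prime,p)u_0(q^\prime)]$ produces only the harmless factors $\epsilon^{|\gamma|}\leq 1$ times bounded derivatives of $\chi$, so the integrand is dominated uniformly in $\epsilon$ by an $L^1(\diff q^\prime\,\diff p)$ function and converges pointwise to the corresponding expression for $a$.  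Dominated convergence therefore yields $\Op^t_\hbar(a_\epsilon)u(q)\to\Op^t_\hbar(a)u(q)$, uniformly on $\operatorname{supp}v_0$, and a final dominated-convergence argument in $q$ transfers the limit to the paired form.

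The main obstacle is this last consistency step, reconciling the paper's oscillatory-integral definition of $\Op^t_\hbar(a)u$ with the distributional pairing $\langle a,\Phi^t_{u,v}\rangle$.  The only technical subtlety is that the seminorms defining $\locsymb{m}{f}$ employ the weighted derivatives $(f(r)^{-1}\partial_\theta)^{\alpha^\prime}$ rather than plain Euclidean ones; but the effective ranges of $q$ (in $\operatorname{supp}v_0$) and $q^\prime$ (in $\operatorname{supp}u_0$) are both compact, so the weight $f(r)$ stays bounded and the plain Euclidean IBP bounds are controlled by the seminorms of $a$ up to a finite constant, making the dominated-convergence argument legitimate.
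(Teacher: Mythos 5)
Your proof follows the same core strategy as the paper: realize $\langle\Op^t_\hbar(a)u,v\rangle$ as the pairing of $a\in\mathscr{S}'(\mathbb{R}^{2n})$ against a Schwartz test function built from $u$, $v$, $t$, $\hbar$, and then appeal to the definition of convergence in $\mathscr{S}'$. Your change of variables $(q,q')\mapsto(tq+(1-t)q',\,q-q')$ is a slight improvement in presentation: it handles all $t\in[0,1]$ by a single formula, whereas the paper's substitution degenerates at $t=1$ and forces a separate case; you also spell out the approximation argument (cutting off $a$ by $\chi(\epsilon\,\cdot)$, passing to the limit on both sides by dominated convergence) that the paper leaves implicit behind ``by changing the order of integrals,'' which is a worthwhile bit of rigor since the $p$-integral is only conditionally convergent when $m\geq 0$.
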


\begin{proof}
    We set  $u=\tilde u |\diff q|^{1/2}$ and $v=\tilde v |\diff q|^{1/2}$. By changing the order of integrals, the inner product $\jbracket{\Op^t_\hbar  (a)u, v}$ is calculated as 
    \begin{equation}\label{eq_distribution_quantization}
        \jbracket{\Op^t_\hbar  (a)u, v}=\int_{\mathbb{R}^{2n}} a(q, p) \varphi (q, p)\, \diff q \diff p
    \end{equation}
    where 
    \[
        \varphi(q, p):=
        \frac{1}{(2\pi \hbar (1-t))^n} \int_{\mathbb{R}^n} e^{i\xi\cdot (q^\prime -q)/\hbar (1-t)} \overline{\tilde v(q^\prime)}\tilde u\left( \frac{-tq^\prime+q}{1-t}\right)
    \]
    if $0\leq t<1$ and 
    \[
        \varphi(q, p):=\frac{1}{(2\pi\hbar)^n}\overline{\tilde v(x)}e^{i\xi\cdot x/\hbar}\int_{\mathbb{R}^n} \tilde u(y)e^{-i\xi\cdot y/\hbar}\, \diff y
    \]
    if $t=1$. In both cases, $\varphi$ is a rapidly decreasing function of $(q, p)\in \mathbb{R}^{2n}$. Thus the assumption $a_j\to a$ in $\mathscr{S}^\prime (\mathbb{R}^{2n})$ implies \eqref{eq_limit_expectation}. 
\end{proof}

\begin{rema*}
    Proposition \ref{prop_expect} holds for general $\{a_j \}_{j=1}^\infty \subset \mathscr{S}^\prime (\mathbb{R}^{2n})$ and $a\in \mathscr{S}^\prime (\mathbb{R}^{2n})$ if we define $\Op^t_\hbar  (a)$ for $a\in \mathscr{S}^\prime (\mathbb{R}^{2n})$ by \eqref{eq_distribution_quantization}. Moreover, we can take rapidly decreasing half-densities $u$ and $v$ in the sense that $\tilde u$ and $\tilde v$ defined by $u=\tilde u |\diff q|^{1/2}$ and $v=\tilde v |\diff q|^{1/2}$ are rapidly decreasing functions. 
\end{rema*}

\begin{prop}
    \label{prop_bisymbol_symbol}
    Let $B\locsymb{m}{f, t}$ be a class of smooth functions $a(q, p, q^\prime)\in C^\infty (\mathbb{R}^{3n})$ with $|a|_{B\locsymb{m}{f, t}, N}<\infty$ for all $N\in \nni$, where 
    \begin{align*}
        &|a|_{B\locsymb{m}{f, t}, N} \\
        &:=\sum_{|\alpha|+|\beta|+|\gamma|\leq N} 
        \| \jbracket{\rho\oplus f(tr+(1-t)r^\prime)^{-1}\eta}^{-m+|\beta|} \\
        &\quad \times f(tr+(1-t)r^\prime)^{-|\alpha^\prime|+|\beta^\prime|-|\gamma^\prime|}\partial_q^\alpha \partial_p^\beta \partial_{q^\prime}^\gamma a\|_{L^\infty (\mathbb{R}^{3n})}. 
    \end{align*}
    For $a\in B\locsymb{m}{f, t}$, we define an operator $\Op_\hbar (a): C_c^\infty (\mathbb{R}^n; \Omega^{1/2})\to C^\infty (\mathbb{R}^n; \Omega^{1/2})$ as 
    \[
        \Op_\hbar (a)(v|\diff q|^{1/2}):=\frac{1}{(2\pi\hbar)^n}\int_{\mathbb{R}^{2n}} a(q, p, q^\prime)e^{ip\cdot (q-q^\prime)/\hbar}v(q^\prime)\, \diff q^\prime \diff p |\diff q|^{1/2}
    \]
    Then, there exists a continuous linear mapping $a\in B\locsymb{m}{f, t}\mapsto b^t\in \locsymb{m}{f}$ such that the equality 
    \begin{equation}
        \label{eq_bisymb_symb_op}
        \Op_\hbar (a)=\Op^t_\hbar  (b^t)
    \end{equation}
    holds. 

    Moreover, $b^t$ has an asymptotic expansion 
    \[
        b^t(q, p)
        \sim \sum_{j=0}^\infty \frac{\hbar^j}{j !}(\partial_p\cdot \partial_{q^\prime})^j|_{q^\prime=0}a(q+(1-t)q^\prime, p, q-tq^\prime)
    \]
    in the sense of 
    \[
        b^t(q, p)
        - \sum_{j=0}^N \frac{\hbar^j}{j !}(\partial_p\cdot \partial_{q^\prime})^j|_{q^\prime=0}a(q+(1-t)q^\prime, p, q-tq^\prime)
        \in \hbar^{N+1}\locsymb{m-N-1}{f}
    \]
    for all $N\in \nni$. 
\end{prop}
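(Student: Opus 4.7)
My approach is to derive an explicit oscillatory-integral formula for $b^t$, establish the symbol bounds through polar-adapted integration by parts, and then extract the asymptotic expansion by Taylor expansion in the fiber variable. The formula for $b^t$ is forced by comparing the Schwartz kernels of $\Op_\hbar(a)$ and $\Op^t_\hbar(b^t)$: substituting $y=tq+(1-t)q^\prime$ and $z=q-q^\prime$ and inverting the partial Fourier transform in $z$ yields
\[
    b^t(q,p):=\frac{1}{(2\pi\hbar)^n}\iint_{\mathbb{R}^{2n}} a(q+(1-t)z, p+p^\prime, q-tz)\,e^{ip^\prime\cdot z/\hbar}\, \diff p^\prime \diff z,
\]
interpreted as an oscillatory integral. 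Once this formula is shown to be rigorous and $b^t\in \locsymb{m}{f}$, the operator identity $\Op_\hbar(a)=\Op^t_\hbar(b^t)$ follows by pairing both sides against test half-densities in $C_c^\infty(\mathbb{R}^n; \Omega^{1/2})$ and reversing the Fourier inversion; the passage from formal to rigorous uses a cutoff regularization together with Proposition \ref{prop_expect}.

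To make sense of the oscillatory integral and to prove $b^t\in \locsymb{m}{f}$ with continuous dependence on $a$, I insert a cutoff $\chi(\epsilon z,\epsilon p^\prime)\in C_c^\infty(\mathbb{R}^{2n})$ and integrate by parts using the two polar-adapted operators
\[
    L_z=\frac{1-\hbar^2\partial_{\rho^\prime}^2-\hbar^2 f(r)^2 |\partial_{\eta^\prime}|^2}{1+z_0^2+f(r)^2|z^\prime|^2}, \quad L_{p^\prime}=\frac{1-\hbar^2\partial_{z_0}^2-\hbar^2 f(r)^{-2}|\partial_{z^\prime}|^2}{1+(\rho^\prime)^2+f(r)^{-2}|\eta^\prime|^2},
\]
where $z=(z_0,z^\prime)$ and $p^\prime=(\rho^\prime,\eta^\prime)$ are the radial/angular splits. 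Both preserve $e^{ip^\prime\cdot z/\hbar}$ and are self-adjoint in their integration variable, so iterated application moves derivatives onto $a$ while producing integrable decay in $(z,p^\prime)$. The essential observation is that evaluation at $(q+(1-t)z, p+p^\prime, q-tz)$ gives average radial variable $t(r+(1-t)z_0)+(1-t)(r-tz_0)=r$, so the $f(r)$-weights in $L_z$ and $L_{p^\prime}$ exactly cancel the $f$-weights in the $B\locsymb{m}{f,t}$-estimates of the derivatives of $a$. Combined with a Peetre inequality that absorbs the shift $p\mapsto p+p^\prime$ in the weight $\jbracket{\rho\oplus f(r)^{-1}\eta}^{m-|\beta|}$ uniformly in $r$, this yields absolute convergence and the required seminorm bounds; differentiation under the integral in $(q,p)$ then produces the full collection of estimates and the continuity of $a\mapsto b^t$. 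This is the main obstacle, as the $f$-weighted bookkeeping is delicate given the potentially exponential growth of $f$.

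The asymptotic expansion follows by Taylor-expanding $a(q+(1-t)z, p+p^\prime, q-tz)$ in $p^\prime$ around $p^\prime=0$. Using $(p^\prime)^\gamma e^{ip^\prime\cdot z/\hbar}=(-i\hbar)^{|\gamma|}\partial_z^\gamma e^{ip^\prime\cdot z/\hbar}$, integration by parts in $z$, and Fourier inversion in $p^\prime$, the main term with $|\gamma|=j$ collapses, up to a combinatorial constant, to $\partial_z^\gamma\partial_p^\gamma a(q+(1-t)z, p, q-tz)|_{z=0}$; summing over $|\gamma|\leq N$ via the multinomial theorem reproduces the expansion in the statement. The Taylor remainder, of the form $\frac{(N+1)(p^\prime)^\gamma}{\gamma!}\int_0^1 (1-s)^N\partial_p^\gamma a(q+(1-t)z, p+sp^\prime, q-tz)\,ds$ with $|\gamma|=N+1$, is handled by the same IBP machinery applied uniformly in $s\in [0,1]$: absorbing $(p^\prime)^{N+1}$ into $\hbar^{N+1}\partial_z^{N+1}$ produces a factor $\hbar^{N+1}$, while $\partial_p^{N+1}$ drops the symbol order by $N+1$, placing the remainder in $\hbar^{N+1}\locsymb{m-N-1}{f}$ as required.
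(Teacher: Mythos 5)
Your proposal is correct, but it takes a genuinely different route from the paper's. The paper handles the (potentially exponential) growth of $f$ by a radial partition of unity $\{\psi_j(r)\}$ combined with angular rescalings $\Theta^t_{jk}(r,\theta)=(r,f(tj+(1-t)k)\theta)$; on each piece $b^t_{jk}$ the function $f$ is effectively a constant $f^t_{jk}$, so a single Euclidean integration-by-parts operator $L=(1+i\hbar p'\cdot\partial_{q'}+i\hbar q'\cdot\partial_{p'})/(1+|q'|^2+|p'|^2)$ suffices, and the pieces are then re-summed via a Weierstrass $M$-test using the $\jbracket{j-k}^{-N}$ off-diagonal decay. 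You instead exploit the exact algebraic identity $t\bigl(r+(1-t)z_0\bigr)+(1-t)\bigl(r-tz_0\bigr)=r$, so that the $B\locsymb{m}{f,t}$-weight $f(tr_1+(1-t)r_2)$ collapses to $f(r)$ everywhere on the fiber; your $f(r)$-weighted operators $L_z$, $L_{p'}$ then cancel the $f$-weights in the seminorms exactly (and the factors $\int(1+f(r)^2|z'|^2)^{-N}dz'\sim f(r)^{-(n-1)}$ and $\int(1+f(r)^{-2}|\eta'|^2)^{-N}d\eta'\sim f(r)^{n-1}$ cancel against each other), so no dyadic decomposition or scaling is needed. This is cleaner and arguably reveals \emph{why} the class $B\locsymb{m}{f,t}$ is defined with the weight $f(tr+(1-t)r')$. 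Two points deserve care in a full write-up: (i) when taking $\partial_q^\alpha\partial_p^\beta$ of $b^t$, derivatives falling on the $f(r)^{\pm2}$ coefficients of $L_z$, $L_{p'}$ must be absorbed using $\partial_r^j\log f\in L^\infty$; (ii) both your argument and the paper's need the prefactor $(2\pi\hbar)^{-n}$ to be cancelled (for instance by an $\hbar$-rescaling of the $(z,p')$ integration variables before IBP, or by a stationary-phase normalization), so that the $\locsymb{m}{f}$-seminorms of $b^t$ are uniform in $\hbar\in(0,1]$ — this is used tacitly in the subsequent $L^2$-boundedness and parametrix construction and should be made explicit.
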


\begin{proof}
    \fstep{Definition of $\bm{b^t}$}If $a\in C_c^\infty (\mathbb{R}^{3n})$, then $b_t$ is represented as an explicit formula 
    \begin{equation}
        \label{eq_bisymb_symb}
        b^t (q, p)=\frac{1}{(2\pi\hbar)^n}\int_{\mathbb{R}^{2n}} a(q+(1-t)q^\prime, p+p^\prime, q-tq^\prime)e^{-ip^\prime\cdot q^\prime/\hbar} \diff q^\prime \diff p^\prime. 
    \end{equation}
    We extend the equation \eqref{eq_bisymb_symb} to a general symbol $a\in B\locsymb{m}{f}$ in the following. Take a partition of unity $\{\psi_j=\psi(\cdot -j)\}_{j\in\mathbb{Z}}$ with $\rmop{supp}\psi \subset (-1, 1)$ and a cutoff function $\chi (q)\in C_c^\infty (\mathbb{R}^n)$ with $\chi(q)=1$ if $|q|\leq 1$. For a small quantity $\varepsilon>0$, We define functions $a^t_{jk}(q, p, q^\prime)$ as 
    \begin{equation}\label{eq_atjk}
    \begin{aligned}
        a^t_{jk, \varepsilon}(q, p, q^\prime)
        :=&
        \psi_j (r+(1-t)r^\prime)\psi_k (r-t r^\prime)\chi (\varepsilon q)\chi(\varepsilon p)\chi(\varepsilon q^\prime)  \\
        &\times a((\Theta^t_{jk})^{-1}(q+(1-t)q^\prime), \Theta^t_{jk}(p), (\Theta^t_{jk})^{-1}(q-t q^\prime))
    \end{aligned}
\end{equation}
    where 
    \[
        \Theta^t_{jk}(r, \theta):=(r, f(tj+(1-t)k)\theta). 
    \]
    We define $b^t$ as 
    \begin{equation}
        \label{eq_defi_bt}
        b^t(q, p):=\sum_{j, k\in \mathbb{Z}} b^t_{jk}(q, p)
    \end{equation}
    where 
    \[
        b^t_{jk} (q, p):=\lim_{\varepsilon\to +0}b^t_{jk, \varepsilon} (q, p)
    \]
    and 
    \begin{equation}
        \label{eq_defi_bte}
        b^t_{jk, \varepsilon}(q, p):=
        \frac{1}{(2\pi\hbar)^n}\int_{\mathbb{R}^{2n}} a^t_{jk, \varepsilon}(\Theta^t_{jk}(q), (\Theta^t_{jk})^{-1}(p)+p^\prime, q^\prime) 
        e^{-ip^\prime\cdot q^\prime/\hbar} \diff q^\prime \diff p^\prime. 
    \end{equation}    

    \fstep{Well-definedness of $\bm{b^t_{jk}}$}We prove the well-definedness of \eqref{eq_defi_bt} and \eqref{eq_defi_bte}. 
    We consider derivatives of the pushforward ${\widetilde\Theta}^t_{jk*}b^t_{jk, \varepsilon}$ by the canonical mapping ${\widetilde\Theta}^t_{jk}$ associated with $\Theta^t_{jk}$. They are calculated as 
    \begin{equation}\label{eq_btjk_push_der}
    \begin{aligned}
        \partial_q^\alpha \partial_p^\beta {\widetilde\Theta}^t_{jk*}b^t_{jk, \varepsilon}(q, p)
        =\frac{1}{(2\pi\hbar)^n} 
        \int_{\mathbb{R}^{2n}} (\partial_q^\alpha \partial_p^\beta a^t_{jk, \varepsilon})(q, p+p^\prime, q^\prime) 
        e^{-ip^\prime\cdot q^\prime/\hbar} \diff q^\prime \diff p^\prime. 
    \end{aligned}
\end{equation}
    Integration by parts by the differential operator
    \begin{equation}\label{eq_defi_l}
        L:=\frac{1+i\hbar p^\prime \cdot \partial_{q^\prime}+i\hbar q^\prime \cdot\partial_{p^\prime}}{1+|q^\prime|^2+|p^\prime|^2}
    \end{equation}
    yields the equation
    \begin{equation}\label{eq_btjk}
    \begin{aligned}
        \partial_q^\alpha \partial_p^\beta {\widetilde\Theta}^t_{jk*}b^t_{jk, \varepsilon} (q, p)
        =\frac{1}{(2\pi\hbar)^n}
        \int_{\mathbb{R}^{2n}} (\transp{L})^N(\partial_q^\alpha \partial_p^\beta a^t_{jk, \varepsilon})(q, p+p^\prime, q^\prime) e^{-ip^\prime\cdot q^\prime/\hbar} \diff q^\prime \diff p^\prime
    \end{aligned}
\end{equation}
    for an arbitrary $N\in \nni$. 

    We set $f^t_{jk}:=f(tj+(1-t)k)$. Since $|r-tj-(1-t)k|\leq 1$ on $\rmop{supp}a^t_{jk}$, we have the estimate 
    \begin{align*}
        &|\partial_q^\alpha \partial_p^\beta \partial_{q^\prime}^\gamma a^t_{jk, \varepsilon}(q, p, q^\prime)| \\
        &\leq C\left(\frac{f^t_{jk}}{f(r)}\right)^{-|\alpha^\prime|+|\beta^\prime|-|\gamma^\prime|} |a|_{B\locsymb{m}{f, t}, N} 1_{[j-1, j+1]}(r+(1-t)r^\prime)1_{[k-1, k+1]}(r-tr^\prime) \\
        &\quad\times \jbracket{\rho \oplus (f^t_{jk}/f(r))\eta}^{m-|\beta|} \\
        &\leq C|a|_{B\locsymb{m}{f, t}, N}1_{[-1, 1]}(r-tj-(1-t)k)1_{[-2, 2]}(r^\prime-j+k)\jbracket{p}^{m-|\beta|}
    \end{align*}
    for a constant $C=C_N>0$ independent of $(q, p, q^\prime)\in \mathbb{R}^{3n}$, $j, k\in \mathbb{Z}$ and $|\alpha|+|\beta|+|\gamma|\leq N$. Thus the integrand in the right hand side of \eqref{eq_btjk} is estimated as 
    \[
        |(\transp{L})^N(\partial_q^\alpha \partial_p^\beta a^t_{jk, \varepsilon})(q, p+p^\prime, q^\prime)|
        \leq C\jbracket{q^\prime \oplus p^\prime}^{-N}\jbracket{p+p^\prime}^{m-|\beta|}
    \]
    uniformly in $\varepsilon>0$. Hence we can apply the Lebesgue dominated convergence theorem to \eqref{eq_btjk} and obtain 
    \begin{equation}
        \label{eq_btjk_2}
        \begin{aligned}
         &   \lim_{\varepsilon\to +0}\partial_q^\alpha \partial_p^\beta {\widetilde\Theta}^t_{jk*}b^t_{jk, \varepsilon} (q, p) \\
        &=\frac{1}{(2\pi\hbar)^n}
        \int_{\mathbb{R}^{2n}} (\transp{L})^N(\partial_q^\alpha \partial_p^\beta a^t_{jk})(q, p+p^\prime, q^\prime) e^{-ip^\prime\cdot q^\prime/\hbar} \diff q^\prime \diff p^\prime
        \end{aligned}
    \end{equation}
    where 
        \begin{align*}
            a^t_{jk}(q, p, q^\prime)
            :=&\psi_j (r+(1-t)r^\prime)\psi_k (r-t r^\prime)  \\
            &\times a((\Theta^t_{jk})^{-1}(q+(1-t)q^\prime), \Theta^t_{jk}(p), (\Theta^t_{jk})^{-1}(q-t q^\prime)). 
        \end{align*}

    Next we show that the convergence 
    \begin{equation}\label{eq_uniform_weighted}
        \lim_{\varepsilon\to +0}\jbracket{q\oplus p}^{-N}\partial_q^\alpha \partial_p^\beta {\widetilde\Theta}^t_{jk*}b^t_{jk, \varepsilon} (q, p)
        =\jbracket{q\oplus p}^{-N}\partial_q^\alpha \partial_p^\beta {\widetilde\Theta}^t_{jk*}b^t_{jk} (q, p)
    \end{equation}
    is uniform in $(q, p)\in \mathbb{R}^{2n}$ if we take a sufficiently large $N\in \nni$. We have the estimate 
    \begin{equation}\label{eq_epsilon_diff}
    \begin{aligned}
        &\left|\int_{\mathbb{R}^{2n}} (\transp{L})^N(\partial_q^\alpha \partial_p^\beta (a^t_{jk}-a^t_{jk, \varepsilon}))(q, p+p^\prime, q^\prime) e^{-ip^\prime\cdot q^\prime/\hbar} \diff q^\prime \diff p^\prime\right| \\
        &\leq C\int_{\mathbb{R}^{2n}}\jbracket{q^\prime \oplus p^\prime}^{-N}\jbracket{p+p^\prime}^{m-|\beta|} \\
        &\quad\times (1_{[\varepsilon^{-1}, \infty)}(|q|)+1_{[\varepsilon^{-1}, \infty)}(|p+p^\prime|)+1_{[\varepsilon^{-1}, \infty)}(|q^\prime|))\, \diff q^\prime \diff p^\prime. 
    \end{aligned}
\end{equation}
    Each integrals are estimated as 
    \[
        \int_{\mathbb{R}^{2n}}\jbracket{q^\prime \oplus p^\prime}^{-N}\jbracket{p+p^\prime}^{m-|\beta|}1_{[\varepsilon^{-1}, \infty)}(|q|)\, \diff q^\prime \diff p^\prime 
        \leq C\varepsilon \jbracket{q}\jbracket{p}^{m-|\beta|}, 
    \]
    \[
        \int_{\mathbb{R}^{2n}}\jbracket{q^\prime \oplus p^\prime}^{-N}\jbracket{p+p^\prime}^{m-|\beta|}1_{[\varepsilon^{-1}, \infty)}(|p+p^\prime|)\, \diff q^\prime \diff p^\prime 
        \leq C\varepsilon\jbracket{p}^{m-|\beta|+1}
    \]
    and 
    \[
        \int_{\mathbb{R}^{2n}}\jbracket{q^\prime \oplus p^\prime}^{-N}\jbracket{p+p^\prime}^{m-|\beta|}1_{[\varepsilon^{-1}, \infty)}(|q^\prime|)\, \diff q^\prime \diff p^\prime 
        \leq C\varepsilon\jbracket{p}^{m-|\beta|}. 
    \]
    Thus \eqref{eq_epsilon_diff} becomes 
    \begin{align*}
        |\partial_q^\alpha \partial_p^\beta {\widetilde\Theta}^t_{jk*}(b^t_{jk, \varepsilon}-b^t_{jk}) (q, p)|
        \leq C\hbar^{-n}\varepsilon \jbracket{q}\jbracket{p}^{m-|\beta|+1}. 
    \end{align*}
    This inequality shows that the convergence \eqref{eq_uniform_weighted} is uniform in $(q, p)\in \mathbb{R}^{2n}$ by taking $N=m-|\beta|+2$. 

    Hence we can change the order of the limit and the differentiation in \eqref{eq_btjk_2} and obtain 
    \begin{equation}
        \label{eq_btjk_3}
        \begin{aligned}
            &\partial_q^\alpha \partial_p^\beta {\widetilde\Theta}^t_{jk*}b^t_{jk} (q, p) \\
        &=\frac{1}{(2\pi\hbar)^n}
        \int_{\mathbb{R}^{2n}} (\transp{L})^N(\partial_q^\alpha \partial_p^\beta a^t_{jk})(q, p+p^\prime, q^\prime) e^{-ip^\prime\cdot q^\prime/\hbar} \diff q^\prime \diff p^\prime.  
        \end{aligned}
    \end{equation}

    \fstep{Well-definedness of $\bm{b^t}$}
    By \eqref{eq_btjk_3}, the function $\partial_q^\alpha \partial_p^\beta {\widetilde\Theta}^t_{jk*}b^t_{jk} (q, p)$ is estimated as 
    \begin{align*}
        &|\partial_q^\alpha \partial_p^\beta {\widetilde\Theta}^t_{jk*}b^t_{jk} (q, p)| \\
        &\leq C|a|_{B\locsymb{m}{f, t}, N+|\alpha|+|\beta|}1_{[-1, 1]}(r-tj-(1-t)k) \\
        &\quad\times\int_{\mathbb{R}^{2n}}1_{[-2, 2]}(r^\prime-j+k)\jbracket{p+p^\prime}^{m-|\beta|}\jbracket{q^\prime\oplus p^\prime}^{-N}\, \diff q^\prime \diff p^\prime \\
        &\leq C|a|_{B\locsymb{m}{f, t}, N+|\alpha|+|\beta|}1_{[-1, 1]}(r-tj-(1-t)k)\jbracket{j-k}^{-N}\jbracket{p}^{m-|\beta|}. 
    \end{align*}
    Hence the derivatives of the summand of the right hand side of \eqref{eq_defi_bt} is estimated as 
    \begin{align*}
        &|\partial_q^\alpha \partial_p^\beta b^t_{jk} (q, p)| \\
        &\leq C(f^t_{jk})^{|\alpha^\prime|-|\beta^\prime|}|a|_{B\locsymb{m}{f, t}, N+|\alpha|+|\beta|}1_{[-1, 1]}(r-tj-(1-t)k) \\
        &\quad \times \jbracket{j-k}^{-N}\jbracket{\rho\oplus (f^t_{jk})^{-1}\eta}^{m-|\beta|} \\
        &\leq Cf(r)^{|\alpha^\prime|-|\beta^\prime|}|a|_{B\locsymb{m}{f, t}, N+|\alpha|+|\beta|}1_{[-1, 1]}(r-tj-(1-t)k) \\
        &\quad \times \jbracket{j-k}^{-N}\jbracket{\rho\oplus f(r)^{-1}\eta}^{m-|\beta|} \\
        &\leq Cf(r)^{|\alpha^\prime|-|\beta^\prime|}|a|_{B\locsymb{m}{f, t}, N+|\alpha|+|\beta|}\jbracket{r}^N\jbracket{tj-(1-t)k}^{-N} \\
        &\quad \times \jbracket{j-k}^{-N}\jbracket{\rho\oplus f(r)^{-1}\eta}^{m-|\beta|}. 
    \end{align*}
    This implies that the sum  
    \[
        \sum_{j, k\in \mathbb{Z}} \jbracket{r}^{-N}f(r)^{-|\alpha^\prime|+|\beta^\prime|}\jbracket{\rho\oplus f(r)^{-1}\eta}^{-m+|\beta|}|\partial_q^\alpha \partial_p^\beta b^t_{jk} (q, p)|
    \]
    converges uniformly in $(q, p)\in \mathbb{R}^{2n}$ by the Weierstrass $M$-test. Thus we can change the order of the summation and the derivative in \eqref{eq_defi_bt} and obtain 
    \[
        \partial_q^\alpha \partial_p^\beta b^t (q, p)
        =\sum_{j, k\in \mathbb{Z}} \partial_q^\alpha \partial_p^\beta b^t_{jk} (q, p)
    \]
    and 
    \[
        |\partial_q^\alpha \partial_p^\beta b^t (q, p)|
        \leq Cf(r)^{|\alpha^\prime|-|\beta^\prime|}\jbracket{\rho\oplus f(r)^{-1}\eta}^{m-|\beta|}|a|_{B\locsymb{m}{f, t}, N+|\alpha|+|\beta|}. 
    \]
    This estimate shows the continuity of the linear mapping $a\in B\locsymb{m}{f, t}\mapsto b^t \in \locsymb{m}{f}$. 

    \fstep{Proof of \eqref{eq_bisymb_symb_op}}Since $a_{jk, \varepsilon}\in C_c^\infty (\mathbb{R}^{3n})$, we have the equation 
    \begin{equation}
        \label{eq_bisymb_symb_jke}
        \jbracket{\psi_j\Op_\hbar (c^t_{jk, \varepsilon})\psi_k u, v}=\jbracket{ \Op^t_\hbar  (b^t_{jk, \varepsilon})u, v}
    \end{equation}
    where
    \[
        c^t_{jk, \varepsilon} (q, p, q^\prime)
        =a(q, p, q^\prime)\chi (\varepsilon \Theta^t_{jk}(tq+(1-t)q^\prime))
        \chi (\varepsilon (\Theta^t_{jk})^{-1}(p)) \chi (\varepsilon \Theta^t_{jk}(q-q^\prime)). 
    \]
    Since the uniform convergence \eqref{eq_epsilon_diff} implies the convergence $b^t_{jk, \varepsilon}\to b^t_{jk}$ in $\mathscr{S}^\prime(\mathbb{R}^{2n})$, we can apply Proposition \ref{prop_expect} to \eqref{eq_bisymb_symb_jke} and obtain 
    \begin{equation}
        \label{eq_bisymb_symb_jk}
        \jbracket{\psi_j \Op_\hbar (a)\psi_k u, v}=\jbracket{ \Op^t_\hbar  (b^t_{jk})u, v}. 
    \end{equation}
    Finally, since $u$ and $v$ are compactly supported, the sum of \eqref{eq_bisymb_symb_jk} over $j, k\in \mathbb{Z}$ becomes a finite sum. Thus we obtain the desired equation \eqref{eq_bisymb_symb_op}. 

    \fstep{Asymptotic expansion}We expand \eqref{eq_btjk_push_der} with respect to $p^\prime$: 
    \begin{align}
        &\partial_q^\alpha \partial_p^\beta {\widetilde\Theta}^t_{jk*}b^t_{jk, \varepsilon}(q, p) \nonumber\\
        &=\frac{1}{(2\pi\hbar)^n} 
        \int_{\mathbb{R}^{2n}} \biggl(\sum_{|\gamma|\leq N}\frac{1}{\gamma !}(\partial_q^\alpha \partial_p^{\beta+\gamma} a^t_{jk, \varepsilon})(q, p, q^\prime)(p^\prime)^\gamma \nonumber\\
        &\quad+\sum_{|\gamma|=N+1} R^t_{jk, \alpha\beta\gamma, \varepsilon}(q, p, q^\prime, p^\prime)(p^\prime)^\gamma \biggr)
        e^{-ip^\prime\cdot q^\prime/\hbar} \diff q^\prime \diff p^\prime \nonumber\\
        &\begin{aligned}
        &=\sum_{|\gamma|\leq N}\frac{(i^{-1}\hbar)^{|\gamma|}}{\gamma !}(\partial_q^\alpha \partial_p^{\beta+\gamma}\partial_{q^\prime}^\gamma a^t_{jk, \varepsilon})(q, p, 0) \\
        &\quad +\frac{(i^{-1}\hbar)^{N+1}}{(2\pi\hbar)^n} 
        \sum_{|\gamma|=N+1}\int_{\mathbb{R}^{2n}} \partial_{q^\prime}^\gamma R^t_{jk, \alpha\beta\gamma, \varepsilon}(q, p, q^\prime, p^\prime)
        e^{-ip^\prime\cdot q^\prime/\hbar} \diff q^\prime \diff p^\prime. 
        \end{aligned}\label{eq_btjk_push_der_expand}
    \end{align}
    where 
    \[
        R^t_{jk, \alpha\beta\gamma, \varepsilon}(q, p, q^\prime, p^\prime)
        :=\frac{N+1}{\gamma !}\int_0^1 (\partial_q^\alpha \partial_p^{\beta+\gamma} a^t_{jk, \varepsilon})(q, p+\sigma p^\prime, q^\prime)\, \diff \sigma. 
    \]
    $R^t_{jk, \alpha\beta\gamma, \varepsilon}$ has an estimate 
    \begin{align*}
        &|(\transp{L})^{2n+1}\partial_{q^\prime}^\gamma R^t_{jk, \alpha\beta\gamma, \varepsilon}(q, p, q^\prime, p^\prime)| \\
        &\leq C\jbracket{p}^{m-|\beta|-|\gamma|}\jbracket{q^\prime \oplus p^\prime}^{-N^\prime}1_{[-1, 1]}(r-tj-(1-t)k)1_{[-2, 2]}(r^\prime-j+k) \\
        &\quad \times \sum_{|\delta|\leq N^\prime}\| \jbracket{p}^{-m+|\beta|+|\gamma|}\partial_{q, p, q^\prime}^\delta \partial_q^\alpha \partial_p^{\beta+\gamma}\partial_{q^\prime}^\gamma a^t_{jk}\|_{L^\infty} \\
        &\leq C\jbracket{p}^{m-|\beta|-|\gamma|}\jbracket{q^\prime \oplus p^\prime}^{-2n-1}1_{[-1, 1]}(r-tj-(1-t)k)1_{[-2, 2]}(r^\prime-j+k) \\
        &\quad \times |a|_{B\locsymb{m}{f, t}, |\alpha|+|\beta|+2|\gamma|+N^\prime}
    \end{align*}
    which is uniform in $j, k\in \mathbb{Z}$. Here $L$ is the operator defined in \eqref{eq_defi_l} and $N^\prime \in \nni$ is a sufficiently large integer. Thus, as in the proof of \eqref{eq_btjk_2}, we take a limit $\varepsilon\to +0$ of \eqref{eq_btjk_push_der_expand} and obtain 
    \begin{align*}
        &\partial_q^\alpha \partial_p^\beta {\widetilde\Theta}^t_{jk*}b^t_{jk}(q, p) \\
        &=\sum_{|\gamma|\leq N}\frac{(i^{-1}\hbar)^{|\gamma|}}{\gamma !}(\partial_q^\alpha \partial_p^{\beta+\gamma}\partial_{q^\prime}^\gamma a^t_{jk})(q, p, 0) \\
        &\quad +\frac{(i^{-1}\hbar)^{N+1}}{(2\pi\hbar)^n} 
        \sum_{|\gamma|=N+1}\int_{\mathbb{R}^{2n}} (\transp{L})^{N^\prime}\partial_{q^\prime}^\gamma R^t_{jk, \alpha\beta\gamma}(q, p, q^\prime, p^\prime)
        e^{-ip^\prime\cdot q^\prime/\hbar} \diff q^\prime \diff p^\prime. 
    \end{align*}
    Here 
    \[
        R^t_{jk, \alpha\beta\gamma}(q, p, q^\prime, p^\prime)
        :=\lim_{\varepsilon\to +0}R^t_{jk, \alpha\beta\gamma, \varepsilon}(q, p, q^\prime, p^\prime). 
    \]
    Thus we have the estimate of the error term  
    \begin{align*}
        &\left|\partial_q^\alpha \partial_p^\beta {\widetilde\Theta}^t_{jk*}b^t_{jk}(q, p)-\sum_{|\gamma|\leq N}\frac{(i^{-1}\hbar)^{|\gamma|}}{\gamma !}(\partial_q^\alpha \partial_p^{\beta+\gamma}\partial_{q^\prime}^\gamma a^t_{jk})(q, p, 0)\right| \\
        &\leq C\hbar^{N+1-n}\jbracket{p}^{m-|\beta|-N-1}
        1_{[-1, 1]}(r-tj-(1-t)k)\jbracket{j-k}^{-n-1} \\
        &\quad \times |a|_{B\locsymb{m}{f, t}, |\alpha|+|\beta|+2N+N^\prime+2}. 
    \end{align*}
    This is equivalent to 
    \begin{align*}
        &\left|\partial_q^\alpha \partial_p^\beta b^t_{jk}(q, p)-\sum_{l=0}^N \hbar^l\partial_q^\alpha \partial_p^\beta d^t_{jk, l}(q, p)\right| \\
        &\leq C\hbar^{N+1-n}\jbracket{\rho\oplus f(r)^{-1}\eta}^{m-|\beta|-N-1}
        1_{[-1, 1]}(r-tj-(1-t)k)\jbracket{j-k}^{-n-1} \\
        &\quad\times |a|_{B\locsymb{m}{f, t}, |\alpha|+|\beta|+2|\gamma|+N^\prime} 
    \end{align*}
    where 
    \[
        d^t_{jk, l}(q, p):=\frac{1}{l !}(i\partial_p \cdot \partial_{q^\prime})^l|_{q^\prime=0}(\psi_j(r+(1-t)r^\prime)\psi_k (r-tr^\prime)a(q+(1-t)q^\prime, p, q-tq^\prime)). 
    \]
    By the same method in the proof of the convergence of $\sum_{j, k}b^t_{jk}$, the sum 
    \[
        \sum_{j, k\in \mathbb{Z}} \jbracket{r}^{-N^\prime}f(r)^{-|\alpha^\prime|+|\beta^\prime|}\jbracket{\rho\oplus f(r)^{-1}\eta}^{-m+|\beta|+|\gamma|}|\partial_q^\alpha \partial_p^\beta d^t_{jk, l} (q, p)|
    \]
    converges uniformly in $(q, p)\in \mathbb{R}^{2n}$ if we take an integer $N^\prime \in \nni$ sufficiently large. Hence we obtain the desired estimate
    \begin{align*}
        &\left|\partial_q^\alpha \partial_p^\beta b^t(q, p)-\sum_{l=0}^N \frac{\hbar^l}{l !}\partial_q^\alpha \partial_p^\beta (i\partial_p\cdot \partial_{q^\prime})^l|_{q^\prime=0}a(q+(1-t)q^\prime, p, q-tq^\prime)\right| \\
        &\leq C\hbar^{N+1-n}\jbracket{\rho\oplus f(r)^{-1}\eta}^{m-|\beta|-N-1}
         |a|_{B\locsymb{m}{f, t}, |\alpha|+|\beta|+2|\gamma|+N^\prime}
    \end{align*}
    replacing $N$ to $N+n$. 
\end{proof}

\subsection{Boundedness on $L^2$ space}

In this section, we prove the following Calder\'on-Vaillancourt type theorem. We define the space $L^2(M; \Omega^{1/2})$ as the completion of $C_c^\infty (M; \Omega^{1/2})$ with respect to the inner product 
\[
    \jbracket{u, v}_{L^2(M; \Omega^{1/2})}:=\int_M u \overline{v}. 
\]

\begin{theo}[$L^2$ boundedness]\label{theo_L2_bdd_simplest}
    There exist  constants $C>0$ and $N\geq 0$ such that the inequality
    \[
    \|\Op^t_\hbar  (a)u\|_{L^2(\mathbb{R}^n; \Omega^{1/2})}\leq C|a|_{\locsymb{0}{f}, N}\|u\|_{L^2(\mathbb{R}^n; \Omega^{1/2})}
    \]
    holds for all $a\in \locsymb{0}{f}$, all $u\in C_c^\infty(\mathbb{R}^n)$ and all $t\in [0, 1]$. 
    \end{theo}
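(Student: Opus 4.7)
The plan is to reduce the $L^2$ bound to the classical Calder\'on-Vaillancourt theorem on $\mathbb{R}^{2n}$ (for the standard H\"ormander class $S^0_{0,0}$) by a localization-and-scaling argument in the radial variable, in line with the strategy announced in the introduction. First I would take a radial partition of unity $\chi_j(r):=\chi(r-j)$, $j\in\mathbb{Z}$, with $\chi\in C_c^\infty((-1,1))$ and $\sum_j\chi_j^2=1$. Since each $\chi_j$ depends only on $r$ and $\sum_j\chi_j^2=1$, one has
\[
\|\Op^t_\hbar(a)u\|_{L^2}^2=\sum_j\|\chi_j\Op^t_\hbar(a)u\|_{L^2}^2,\qquad u=\sum_k\chi_k^2u,
\]
so it suffices to estimate the pieces $B_{jk}:=\chi_j\Op^t_\hbar(a)\chi_k$ uniformly in $(j,k)$, with sufficient decay in $|j-k|$ to sum via Schur's test.

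The key step is the scaling. For each $j\in\mathbb{Z}$ introduce the unitary $U_j$ on $L^2(\mathbb{R}^n;\Omega^{1/2})$ induced by the diffeomorphism $(r,\theta)\mapsto(r,f(j)\theta)$. Since $\chi_j(r)$ commutes with $U_j$, a direct computation gives
\[
U_j^*\Op^t_\hbar(a)U_j=\Op^t_\hbar(a_j),\qquad a_j(r,\theta,\rho,\eta):=a(r,f(j)^{-1}\theta,\rho,f(j)\eta).
\]
Derivatives of $a_j$ in $(\theta,\eta)$ bring out factors $f(j)^{-|\alpha'|+|\beta'|}$, which combine with the $\locsymb{0}{f}$-weights of $a$ to yield
\[
|\partial_q^\alpha\partial_p^\beta a_j(q,p)|\leq C(f(r)/f(j))^{|\alpha'|-|\beta'|}\jbracket{\rho\oplus(f(j)/f(r))\eta}^{-|\beta|}|a|_{\locsymb{0}{f},|\alpha|+|\beta|}.
\]
By Assumption \ref{assu_f}, $\partial_r\log f\in L^\infty$, so on any strip $|r-j|\leq R$ the ratio $f(r)/f(j)$ is uniformly pinched between two positive constants, and therefore $a_j$ obeys standard $S^0_{0,0}(\mathbb{R}^{2n})$ estimates on such strips with seminorms controlled by $|a|_{\locsymb{0}{f},N}$.

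For pairs $(j,k)$ with $|j-k|\leq C$, the operator $B_{jk}=U_j\chi_j\Op^t_\hbar(a_j)\chi_kU_j^*$ has bisymbol $\chi_j(r)\,a_j(tq+(1-t)q',p)\,\chi_k(r')$ whose support confines the midpoint $tr+(1-t)r'$ to a bounded strip where $a_j$ is a standard $S^0_{0,0}$ symbol. Applying the flat-space analogue of Proposition \ref{prop_bisymbol_symbol} (which in this regime reduces to the classical bisymbol-to-symbol calculus) and the classical Calder\'on-Vaillancourt theorem, one obtains $\|B_{jk}\|_{L^2\to L^2}\leq C|a|_{\locsymb{0}{f},N}$ uniformly. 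For $|j-k|$ large, integration by parts in $\rho$ in the oscillatory integral using $e^{i\rho(r-r')/\hbar}$ with $|r-r'|\gtrsim|j-k|$ produces arbitrary polynomial decay $\jbracket{j-k}^{-N}$; crucially, the $\rho$-derivatives of $a$ are innocuous under the $\locsymb{0}{f}$-scaling, so no off-diagonal kernel decay in the angular direction---where the exponential growth of $f$ would be catastrophic---is ever needed. Summing the bounds over $(j,k)$ via Schur's test gives the global estimate.

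The main obstacle is to arrange the scaling to be compatible with the radial localization: $a_j$ is a standard $S^0_{0,0}$ symbol only on the strip near $r=j$, so the bisymbol formalism of Proposition \ref{prop_bisymbol_symbol} is essential in order to absorb the cutoffs $\chi_j,\chi_k$ into the symbol before invoking the classical theorem on each piece, and the off-diagonal decay in $|j-k|$ must be realized in the rescaled variables where the bad $f$-growth has been neutralized. This is precisely the virtue of the scaling advertised in the introduction: it lets us sidestep any attempt to prove off-diagonal decay of the kernel of $\Op^t_\hbar(a)$ in the original, $f$-weighted coordinates.
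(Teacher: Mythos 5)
The overall blueprint---radial partition of unity, block estimate for $\chi_j\Op^t_\hbar(a)\chi_k$ with polynomial decay in $|j-k|$, rescaling in $\theta$ to reduce to the classical Calder\'on--Vaillancourt theorem---is exactly the paper's strategy, and Schur's test is a legitimate alternative to the Cotlar--Stein lemma for the final assembly. However, there is a genuine gap in your choice of scaling, and it is precisely where the difficulty of the theorem lies.

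You propose to conjugate $B_{jk}=\chi_j\Op^t_\hbar(a)\chi_k$ by the dilation $U_j$ with factor $f(j)$. The resulting bisymbol is $\chi_j(r)\,a_j\bigl(tq+(1-t)q',p\bigr)\,\chi_k(r')$, and the $\theta$-derivative bound you wrote carries the factor $(f(\tilde r)/f(j))^{|\alpha'|}$, where $\tilde r=tr+(1-t)r'$ is the midpoint at which $a_j$ is evaluated. On $\operatorname{supp}(\chi_j(r)\chi_k(r'))$ the midpoint sits near $tj+(1-t)k$, not near $j$, so for $t<1$ and $|j-k|$ large the ratio $f(\tilde r)/f(j)$ is of order $f(tj+(1-t)k)/f(j)$ and can grow like $e^{c(1-t)|j-k|}$. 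Hence $a_j$ is \emph{not} a uniform $S^0_{0,0}$ bisymbol on the block; its angular derivatives blow up exponentially in $|j-k|$. Integration by parts in $\rho$ does not repair this: it produces the prefactor $\langle j-k\rangle^{-N}$, but the resulting operator still has to be fed to Calder\'on--Vaillancourt, whose hypotheses require uniform control of \emph{all} derivatives of the new bisymbol, including $\partial_\theta$, which still carry the exponential factor. Since any polynomial decay $\langle j-k\rangle^{-N}$ loses to $e^{c|j-k|}$, the off-diagonal blocks are not summable with this scaling.

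The fix---and what the paper actually does---is to adapt the dilation factor to the midpoint: conjugate $B_{jk}$ by $\Theta^t_{jk}(r,\theta)=(r,f^t_{jk}\theta)$ with $f^t_{jk}:=f(tj+(1-t)k)$, depending on both indices and on $t$. With this choice, $tr+(1-t)r'$ lies within distance $O(1)$ of $tj+(1-t)k$ on the support of $\psi_j(r)\psi_k(r')$, so $f(tr+(1-t)r')/f^t_{jk}$ is pinched between positive constants by Assumption~\ref{assu_f}, and the rescaled bisymbol $a^t_{jk}$ satisfies uniform $S^0_{0,0}(\mathbb{R}^{3n})$ bounds (Lemma~\ref{lemm_scaling_bisymbols}). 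Only then do the classical Calder\'on--Vaillancourt theorem and the disjoint-support estimate of Proposition~\ref{prop_usual_bisymbols}\ref{enum_disjoint_usual} give $\|B_{jk}\|\lesssim\langle j-k\rangle^{-N}$ uniformly, and the sum closes. Your argument handles the case $t=1$ (or $j=k$) correctly, since there the midpoint coincides with $j$, but for general $t\in[0,1]$ the $j$-only scaling fails.
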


Let $\{\psi_j=\psi(\cdot-j)\}_{j\in \mathbb{Z}}$ be a partition of unity of $\mathbb{R}$ where $\psi\in C_c^\infty((-1, 1))$,  $\psi\geq 0$. The multiplication operator by the function $\psi_j(r)$ is denoted as $\psi_j$ too: 
\[
(\psi_ju)(r, \theta):=\psi_j(r)u(r, \theta). 
\]

The important step for proving $L^2$ boundedness of $\Op_\hbar (a)$ is an estimate of the $L^2$ operator norm of $\psi_j\Op_\hbar (a)\psi_k$. 

\begin{prop}\label{prop_disjoint_union}
For any $N\geq 0$, there exists a constant $C>0$ and an integer $N^\prime>0$ such that 
\[
\|\psi_j\Op^t_\hbar  (a)\psi_ku\|_{L^2(\mathbb{R}^n; \Omega^{1/2})}\leq C|a|_{\locsymb{m}{f}, N^\prime}\jbracket{j-k}^{-N}\|u\|_{L^2(\mathbb{R}^n; \Omega^{1/2})}
\]
holds for all $t\in [0, 1]$, $j$, $k\in\mathbb{Z}$, $a\in \locsymb{m}{f}$ and $u\in C_c^\infty(\mathbb{R}^n)$. 
\end{prop}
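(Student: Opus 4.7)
The plan is to combine a unitary rescaling in $\theta$ (which converts the polar symbol class locally into a standard H\"ormander bisymbol class on $\mathbb{R}^{3n}$) with integration by parts in $\rho$ (which generates the decay $\jbracket{j-k}^{-N}$), and then to invoke the classical Euclidean Calder\'on--Vaillancourt theorem.

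Concretely, I would first pick a smooth cutoff $\chi\in C_c^\infty((-2,2))$ with $\chi=1$ on $[-1,1]$ and rewrite $\psi_j\Op_\hbar^t(a)\psi_k = \psi_j\Op_\hbar(\tilde b_{jk})\psi_k$ with the bisymbol $\tilde b_{jk}(q,p,q') := \chi(r-j)\chi(r'-k)\,a(tq+(1-t)q', p)$. Conjugating by the unitary dilation $U_\lambda u(r,\theta) := \lambda^{-(n-1)/2}u(r,\theta/\lambda)$ with $\lambda := 1/f(tj+(1-t)k)$ preserves the $L^2$ operator norm, commutes with the cutoffs $\psi_j(r)$ and $\psi_k(r')$, and replaces $a$ by $\hat a(r,\theta,\rho,\eta) := a(r,\theta/\lambda,\rho,\lambda\eta)$. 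Assumption \ref{assu_f} forces $f(tr+(1-t)r')/f(tj+(1-t)k)$ to lie in a compact interval independent of $j,k$ on the support of $\chi(r-j)\chi(r'-k)$, and a chain rule computation then yields the uniform bisymbol estimate
\[
|\partial_q^\alpha\partial_p^\beta\partial_{q'}^\gamma\tilde b_{jk}(q,p,q')| \leq C_{\alpha\beta\gamma}\jbracket{p}^{m-|\beta|}|a|_{\locsymb{m}{f},N(\alpha,\beta,\gamma)},
\]
so that $\tilde b_{jk}$ (after replacing $a$ by $\hat a$) becomes a standard H\"ormander bisymbol of order $m$ on $\mathbb{R}^{3n}$ with seminorms controlled by those of $a$ independently of $j,k$.

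When $|j-k|\geq 5$ the support of $\chi(r-j)\chi(r'-k)$ lies in $\{|r-r'|\geq 1\}$, so the identity $(r-r')^N e^{i\rho(r-r')/\hbar} = (-i\hbar)^N\partial_\rho^N e^{i\rho(r-r')/\hbar}$ followed by an integration by parts in $\rho$ extracts the prefactor $(i\hbar/(j-k))^N$ and replaces $\hat a$ by $h_{jk}(r,r')\partial_\rho^N\hat a$, where $h_{jk}(r,r'):=((j-k)/(r-r'))^N\chi(r-j)\chi(r'-k)$ is smooth with globally bounded derivatives. The new bisymbol is of order $m-N$, so choosing $N\geq\max(m,0)$ brings it into the nonpositive order range where the classical Euclidean Calder\'on--Vaillancourt theorem for standard $S^0_{1,0}$-bisymbols supplies a uniform bound $\|\Op_\hbar(\cdot)\|_{L^2\to L^2}\leq C|a|_{\locsymb{m}{f},N'}$. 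Combined with the $(\hbar/|j-k|)^N$ prefactor this proves the claim for $|j-k|\geq 5$; the case $|j-k|\leq 4$ is absorbed into the constant since $\jbracket{j-k}^{-N}$ is bounded away from zero there. The main obstacle is executing the second step uniformly in $(j,k)$: the rescaling uses the fixed number $f(tj+(1-t)k)$, but the polar symbol class is calibrated by the variable weight $f(tr+(1-t)r')$, so one must keep the ratio between these two compact, which is precisely the role of the bounded-derivative assumption on $\log f$ in Assumption \ref{assu_f}.
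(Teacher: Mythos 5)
Your proof takes essentially the same approach as the paper: conjugate by the unitary dilation that rescales $\theta$ by $f(tj+(1-t)k)$ (the paper's $\Theta^t_{jk}$), verify via Assumption \ref{assu_f} that the conjugated bisymbol lands uniformly in a standard H\"ormander class (this is Lemma \ref{lemm_scaling_bisymbols}), integrate by parts to extract the off-diagonal decay, and finish with the Euclidean Calder\'on--Vaillancourt bound (Proposition \ref{prop_usual_bisymbols}). One computational slip worth flagging: with $U_\lambda u(r,\theta)=\lambda^{-(n-1)/2}u(r,\theta/\lambda)$ and $\lambda=1/f(tj+(1-t)k)$, conjugation yields the symbol $a(r,\lambda\theta,\rho,\eta/\lambda)=a\bigl(r,\theta/f(tj+(1-t)k),\rho,f(tj+(1-t)k)\eta\bigr)$, not $a(r,\theta/\lambda,\rho,\lambda\eta)$ as you wrote; the latter would \emph{amplify} the $\theta$-derivatives by a factor $\sim f(tj+(1-t)k)^2$ rather than normalize them, and only the corrected direction gives the claimed uniform bisymbol estimate through the compactness of $f(tr+(1-t)r')/f(tj+(1-t)k)$ that you correctly identify as the key consequence of Assumption \ref{assu_f}.
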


In order to prove Proposition \ref{prop_disjoint_union}, we employ a kind of scaling arguments. We define a linear diffeomorphism $\Theta^t_{jk}:\mathbb{R}^n \to \mathbb{R}^n$ as 
\[
    \Theta^t_{jk}(r, \theta):=(r, f^t_{jk}\theta)
\]
where
\[
f^t_{jk}:=f(tj+(1-t)k). 
\]

The scaling operator is the pull-back by $\Theta^t_{jk}$: 
\[
(\Theta^t_{jk})^*(v|\diff q|^{1/2})=(f^t_{jk})^{-\frac{n-1}{2}}v(r, (f^t_{jk})^{-1}\theta)|\diff q|^{1/2}.  
\]

This is a unitary operator on $L^2(\mathbb{R}^n; \Omega^{1/2})$. Then the conjugation of $\psi_j\Op_\hbar (a)\psi_k: C_c^\infty(\mathbb{R}^n)\to C^\infty(\mathbb{R}^n)$ by $(\Theta^t_{jk})^*$ is 
\begin{align*}
&(\Theta^t_{jk})_*\psi_j\Op^t_\hbar (a)\psi_k(\Theta^t_{jk})^* (v|\diff q|^{1/2}) \\
&=\frac{1}{(2\pi\hbar)^n}\int_{\mathbb{R}^n}\diff q \int_{\mathbb{R}^n}\diff q^\prime \, a\left(r, (f^t_{jk})^{-1}\theta, p\right)\psi_j(r)\psi_k(r^\prime) \\
&\quad\times e^{i\rho(r-r^\prime)/\hbar+i\eta\cdot ((f^t_{jk})^{-1}\theta-\theta^\prime)/\hbar}v(r^\prime, f^t_{jk}\theta^\prime)|\diff q|^{1/2} \\
&=\frac{1}{(2\pi\hbar)^n}\int_{\mathbb{R}^n}\diff \tilde p \int_{\mathbb{R}^n}\diff \tilde{q}^\prime \, (({\widetilde \Theta}^t_{jk})_*a)\left(tq+(1-t)\tilde q^\prime, \rho, \tilde\eta\right) \\
&\quad \times \psi_j(r)\psi_k(r^\prime)e^{i\tilde p\cdot (q-\tilde{q}^\prime)/\hbar}v(\tilde{q}^\prime) |\diff q|^{1/2}\\
&=\psi_j\Op^t_\hbar (({\widetilde \Theta}^t_{jk})_*a)(\psi_k v(q) |\diff q|^{1/2}). 
\end{align*}
Here we changed the variables 
\[
\tilde{q}^\prime=(r, \tilde\theta)=(r, f^t_{jk}\theta), \quad \tilde p=(\rho, \tilde\eta)=(\rho, (f^t_{jk})^{-1}\eta). 
\]
Thus if $\psi_j\Op^t_\hbar (({\widetilde \Theta}^t_{jk})_*a)\psi_k$ is bounded on $L^2(\mathbb{R}^n; \Omega^{1/2})$, then $\psi_j\Op_\hbar (a)\psi_k$ is also bounded on $L^2(\mathbb{R}^n; \Omega^{1/2})$ and they have the same operator norm. 

\begin{lemm}\label{lemm_scaling_bisymbols}
    We define
    \[
        a^t_{jk}(q, p, q^\prime):=\psi_j (r)\psi_k (r^\prime)(({\widetilde \Theta}^t_{jk})_*a)(tq+(1-t)q^\prime, p)
    \]
    for $a \in \locsymb{m}{f}$. Then, for all multiindices $\alpha, \beta, \gamma \in \nni^n$, there exists a constant $C>0$ such that the estimate 
    \[
        |\partial_q^\alpha \partial_p^\beta \partial_{q^\prime}^\gamma a^t_{jk}(q, p, q^\prime)|\leq C\jbracket{p}^{m-\sigma|\beta|}
    \]
    holds for all $(q, p, q^\prime)\in \mathbb{R}^{3n}$, $j, k\in \mathbb{Z}$ and $a\in \locsymb{m}{f}$. 
\end{lemm}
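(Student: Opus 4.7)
The plan is to unwind the definition of the pushforward $(\widetilde\Theta^t_{jk})_*a$, extract the powers of $f^t_{jk}$ it produces, and then use Assumption~\ref{assu_f} to compare $f(r'')$ with $f^t_{jk}$ on the support of the cutoffs $\psi_j(r)\psi_k(r')$. Since the canonical lift of $\Theta^t_{jk}(r,\theta)=(r,f^t_{jk}\theta)$ is $(r,\theta,\rho,\eta)\mapsto(r,f^t_{jk}\theta,\rho,(f^t_{jk})^{-1}\eta)$, I first note that
\[
((\widetilde\Theta^t_{jk})_*a)(r,\theta,\rho,\eta)=a(r,(f^t_{jk})^{-1}\theta,\rho,f^t_{jk}\eta),
\]
so each $\partial_\theta^{\alpha^\prime}$ applied to the pushforward yields a factor $(f^t_{jk})^{-|\alpha^\prime|}$ and each $\partial_\eta^{\beta^\prime}$ yields a factor $(f^t_{jk})^{|\beta^\prime|}$, while $\partial_r^{\alpha_0}$ and $\partial_\rho^{\beta_0}$ pass through unchanged.

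Next I would rewrite the defining estimate of $\locsymb{m}{f}$ in the equivalent form
\[
|\partial_r^{\alpha_0}\partial_\theta^{\alpha^\prime}\partial_\rho^{\beta_0}\partial_\eta^{\beta^\prime}a(r,\theta,\rho,\eta)|\leq C\,f(r)^{|\alpha^\prime|-|\beta^\prime|}\jbracket{\rho\oplus f(r)^{-1}\eta}^{m-|\beta|},
\]
and apply it at the point $(r'',(f^t_{jk})^{-1}\theta'',\rho,f^t_{jk}\eta)$ with $(r'',\theta'')=tq+(1-t)q'$. Combined with the previous step, this bounds the corresponding derivative of the pushforward by
\[
C\bigl(f(r'')/f^t_{jk}\bigr)^{|\alpha^\prime|-|\beta^\prime|}\jbracket{\rho\oplus f(r'')^{-1}f^t_{jk}\,\eta}^{m-|\beta|}.
\]

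Now I would exploit the cutoffs: on $\rmop{supp}\psi_j(r)\psi_k(r')$ we have $|r-j|<1$ and $|r'-k|<1$, so $|r''-(tj+(1-t)k)|<1$. Assumption~\ref{assu_f} gives $\partial_r\log f\in L^\infty(\mathbb{R})$, and integrating between $r''$ and $tj+(1-t)k$ yields $C^{-1}\leq f(r'')/f^t_{jk}\leq C$ uniformly in $t\in[0,1]$ and $j,k\in\mathbb{Z}$. This absorbs the leftover $f$-powers and replaces $\jbracket{\rho\oplus f(r'')^{-1}f^t_{jk}\eta}^{m-|\beta|}$ by $\jbracket{p}^{m-|\beta|}$ up to a constant.

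Finally, to pass from derivatives of the pushforward (at the composed point) to derivatives of $a^t_{jk}(q,p,q')$ itself, I would apply the chain rule to the affine combination $tq+(1-t)q'$, which produces only constants $t,1-t\in[0,1]$, and the Leibniz rule to distribute $\partial_r^{\alpha_0}$ and $\partial_{r'}^{\gamma_0}$ between the pushforward and the factors $\psi_j(r),\psi_k(r')$, whose derivatives are bounded by $\|\psi\|_{C^N}$ independently of $j,k$. The main obstacle, and the sole reason the estimate is uniform in $j,k$, is precisely the comparability $f(r'')\asymp f^t_{jk}$ supplied by Assumption~\ref{assu_f}; without it the $(f^t_{jk})^{\pm 1}$ factors generated by the pushforward would be unbalanced and the family $\{a^t_{jk}\}$ would fail to be uniformly bounded.
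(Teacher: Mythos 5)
Your proposal is correct and follows essentially the same route as the paper's own (very terse) one-line proof: unwind the pushforward into the explicit formula $a^t_{jk}(q,p,q')=\psi_j(r)\psi_k(r')\,a\bigl(tr+(1-t)r',(f^t_{jk})^{-1}(t\theta+(1-t)\theta'),\rho,f^t_{jk}\eta\bigr)$, differentiate, and invoke $C^{-1}\le f(tr+(1-t)r')/f^t_{jk}\le C$ on $\rmop{supp}\psi_j\times\rmop{supp}\psi_k$, which follows from $\partial_r\log f\in L^\infty$. You have simply written out the chain-rule bookkeeping (the $(f^t_{jk})^{-|\alpha'|+|\beta'|}$ factor from the rescaled angular variables, the affine weights $t,1-t$, the Leibniz contributions from $\psi_j,\psi_k$) and the comparison of Japanese brackets that the paper leaves to the reader; the resulting bound $\jbracket{p}^{m-|\beta|}$ is even slightly stronger than the stated $\jbracket{p}^{m-\sigma|\beta|}$ and implies it for every $\sigma\in[0,1]$ since $\jbracket{p}\ge 1$.
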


\begin{proof}
We immediately obtain the result by differentiating both sides of 
\[
    a^t_{jk}(q, p, q^\prime)=\psi_j (r) \psi_k (r^\prime) a(tr+(1-t)r^\prime, (f^t_{jk})^{-1}(t\theta+(1-t)\theta^\prime), \rho, f^t_{jk}\eta),
\]
noting that there exists a positive constant $C>0$ such that $C^{-1}\leq f(tr+(1-t)r^\prime)/f^t_{jk}\leq C$ for all $j, k\in \mathbb{Z}$, $r\in \rmop{supp}\psi_j$ and $r^\prime \in \rmop{supp}\psi_k$ by the Assumption \ref{assu_f}. 
\end{proof}

\begin{prop}\label{prop_usual_bisymbols}
    For $m\in \mathbb{R}$ and $\sigma\in [0, 1]$, we define a class of smooth functions $S^0_\sigma (\mathbb{R}^{3n})$ as 
    \[
        S^m_\sigma (\mathbb{R}^{3n}):= \{ a(q, p, q^\prime)\in C^\infty (\mathbb{R}^{3n}) \mid |a|_{S^m_\sigma(\mathbb{R}^{3n}), N}<\infty, \, \forall N \in \nni\}, 
    \]
    where the seminorm $|a|_{S^m_\sigma(\mathbb{R}^{3n}), N}$ is defined as 
    \[
        |a|_{S^m_\sigma(\mathbb{R}^{3n}), N}:=\sum_{|\alpha|+|\beta|+|\gamma|\leq N}\| \jbracket{p}^{-m+\sigma |\beta|}\partial_q^\alpha \partial_p^\beta \partial_{q^\prime}^\gamma a\|_{L^\infty (\mathbb{R}^{3n})}. 
    \]
    For $a\in S^m_\sigma (\mathbb{R}^{3n})$, we define an operator $\Op_\hbar (a): C_c^\infty (\mathbb{R}^n; \Omega^{1/2})\to C^\infty (\mathbb{R}^n; \Omega^{1/2})$ as 
    \[
        \Op_\hbar (a)(v|\diff q|^{1/2}):=\frac{1}{(2\pi\hbar)^n}\int_{\mathbb{R}^{2n}} a(q, p, q^\prime)e^{ip\cdot (q-q^\prime)/\hbar}v(q^\prime)\, \diff q^\prime \diff p |\diff q|^{1/2}
    \]
    Then the following statements hold. 
\begin{enumerate}[label=(\roman*)]
\item \label{enum_bdd_usual}There exists a constant $C>0$ and $N\geq 0$ such that 
\[
\|\Op_\hbar (a)u\|_{L^2(\mathbb{R}^n; \Omega^{1/2})}\leq C|a|_{S^0_0(\mathbb{R}^{3n}), N}\|u\|_{L^2(\mathbb{R}^n; \Omega^{1/2})}
\]
holds for all $\hbar\in (0, 1]$, $a\in S^0_0(\mathbb{R}^{3n})$ and $u\in C_c^\infty(\mathbb{R}^n; \Omega^{1/2})$. 
\item \label{enum_disjoint_usual}For all $m\in \mathbb{R}$ and $N\in \nni$ and $\delta>0$, there exists a constant $C>0$ and an integer $N^\prime\geq 0$ such that 
\begin{align*}
&\|\chi_1\Op_\hbar (a)\chi_2u\|_{L^2(\mathbb{R}^n; \Omega^{1/2})} \\
&\leq C\hbar^N\delta^{-N}|\chi_1|_{N^\prime} |\chi_2|_{N^\prime}|a|_{S^m_1 (\mathbb{R}^{3n}), N+N^\prime} \|u\|_{L^2(\mathbb{R}^n; \Omega^{1/2})}
\end{align*}
holds for all $a\in S^m_1 (\mathbb{R}^{3n})$, $u\in C_c^\infty(\mathbb{R}^n)$ and $\chi_1$, $\chi_2\in \mathcal{B}$ with 
\[
\mathrm{dist}(\mathrm{supp}(\chi_1), \mathrm{supp}(\chi_2))\geq \delta. 
\]
Here 
\[
\mathcal{B}:=\{\, \chi\in C^\infty(\mathbb{R}^n)\mid \partial_q^\alpha\chi\in L^\infty(\mathbb{R}^n)\text{ for all } \alpha\in \nni^n\,\}
\]
and 
\[
|\chi|_{N^\prime}:=\sum_{|\alpha|\leq N^\prime} \|\partial_q^\alpha\chi\|_{L^\infty(\mathbb{R}^n)}. 
\]
\end{enumerate}
\end{prop}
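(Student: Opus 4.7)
This proposition combines a semiclassical Calder\'on--Vaillancourt theorem for bisymbols in $S^0_0(\mathbb{R}^{3n})$ with a standard off-diagonal decay estimate. I would address the two parts separately.

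For part \ref{enum_bdd_usual}, the first step is a semiclassical rescaling: the map $U_\hbar : v(q)|\diff q|^{1/2} \mapsto \hbar^{n/4} v(\sqrt{\hbar}\, q)|\diff q|^{1/2}$ is unitary on $L^2(\mathbb{R}^n; \Omega^{1/2})$ and conjugates $\Op_\hbar(a)$ to $\Op_1(a_\hbar)$ with $a_\hbar(q, p, q') := a(\sqrt{\hbar}\,q, \sqrt{\hbar}\,p, \sqrt{\hbar}\,q')$ and $|a_\hbar|_{S^0_0, N} \leq |a|_{S^0_0, N}$ uniformly in $\hbar \in (0, 1]$, so it suffices to bound $\|\Op_1(b)\|_{L^2 \to L^2}$ by a seminorm of $b$. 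Here I would apply the Cotlar--Stein lemma with the decomposition $b = \sum_{j \in \mathbb{Z}^n} b\, \phi_j$, where $\{\phi_j(p) = \phi(p - j)\}$ is a partition of unity subordinate to unit cubes in momentum. The kernel
\[
K_j(q, q') = \frac{1}{(2\pi)^n} \int b(q, p, q')\, \phi_j(p)\, e^{ip \cdot (q - q')} \diff p
\]
is absolutely convergent and, by integration by parts in $p$, decays rapidly in $|q - q'|$ uniformly in $j$, so Schur's test gives $\|\Op_1(b\,\phi_j)\|_{L^2 \to L^2} = O(1)$ uniformly. The almost-orthogonality estimates $\|\Op_1(b\,\phi_j)^* \Op_1(b\,\phi_k)\|, \|\Op_1(b\,\phi_j)\, \Op_1(b\,\phi_k)^*\| = O(\jbracket{j - k}^{-M})$ for every $M$ follow from integration by parts in the intermediate position variable of the composition kernel against a phase gradient of size $\jbracket{j - k}$.

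For part \ref{enum_disjoint_usual}, I would absorb the cutoffs by writing $\chi_1 \Op_\hbar(a) \chi_2 = \Op_\hbar(\tilde a)$ with $\tilde a(q, p, q') := \chi_1(q)\, a(q, p, q')\, \chi_2(q')$ and integrate by parts $N$ times in $p$ using the operator $L := -i\hbar (q - q') \cdot \partial_p / |q - q'|^2$, which preserves $e^{ip \cdot (q - q')/\hbar}$ and is well-defined on the support of $\tilde a$ thanks to $|q - q'| \geq \delta$. Each application gains a factor $\hbar/|q - q'| \leq \hbar \delta^{-1}$ and places a derivative $\partial_p$ on $\tilde a$, which contributes $\jbracket{p}^{-1}$ because $a \in S^m_1$. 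After $N$ applications one obtains $\Op_\hbar(\tilde a_N)$ with $\tilde a_N \in S^{m - N}_1(\mathbb{R}^{3n})$ and seminorms controlled by $C\hbar^N \delta^{-N} |\chi_1|_{N'} |\chi_2|_{N'} |a|_{S^m_1, N + N'}$; taking $N$ so large that $m - N < -n$ makes the kernel absolutely integrable, and Schur's test delivers the stated bound. The delicate step in the whole argument is the almost-orthogonality verification for Cotlar--Stein in part \ref{enum_bdd_usual}, where one must extract $\jbracket{j - k}^{-M}$ decay from an iterated oscillatory integral in two momenta and an intermediate position, uniformly in all remaining variables; everything else reduces to routine seminorm bookkeeping.
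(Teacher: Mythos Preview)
Your proposal is correct, and for part \ref{enum_disjoint_usual} it follows essentially the paper's route: the paper performs exactly the same integration by parts with $-i\hbar(q-q')\cdot\partial_p/|q-q'|^2$ on the support of $\chi_1(q)\chi_2(q')a(q,p,q')$, extracts $\hbar^N\delta^{-N}$, and then --- instead of pushing on to $m-N<-n$ and invoking Schur --- simply observes that for $N\geq m$ the resulting amplitude lies in $S^0_0(\mathbb{R}^{3n})$ and feeds it back into part \ref{enum_bdd_usual}. Using \ref{enum_bdd_usual} here is more economical than your Schur-test endgame and sidesteps the slight awkwardness in your phrase ``taking $N$ so large'': $N$ is prescribed in the statement, so one should rather say that the argument delivers the estimate for $N\geq m$ (which is all that is needed downstream), or do a few extra integrations by parts and then apply \ref{enum_bdd_usual}.

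For part \ref{enum_bdd_usual} the approaches genuinely differ: the paper treats this as a known result and cites Kumano-go (reduction of a bisymbol to a single symbol, then the classical Calder\'on--Vaillancourt theorem), whereas you give a self-contained proof via semiclassical rescaling and Cotlar--Stein with a momentum-space partition. Your route is standard and correct; it has the advantage of being self-contained, while the paper's citation is shorter and avoids the almost-orthogonality bookkeeping you flag as the delicate step.
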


\begin{proof}
    (i) This is a well-known result. See, for instance, the textbook of Kumano-go \cite{Kumano-go81} (Combine Theorem 2.5 in Chapter 2 with Theorem 1.6 in Chapter 7). 
    
    (ii) Since $q\neq q^\prime$ on the support of $a(q, p, q^\prime)\chi_1(q)\chi_2(q^\prime)$, we can apply the integration by parts by a differential operator $-i(q-q^\prime)\cdot\partial_p/|q-q^\prime|^2$ and obtain
    \[
    \chi_1\Op_\hbar (a)\chi_2u=\hbar^N \Op_\hbar \left(\chi_1(q)\chi_2(q^\prime)\left(\frac{i(q-q^\prime)\cdot\partial_p}{|q-q^\prime|^2}\right)^Na(q, p, q^\prime)\right)u 
    \]
    for all $N\in \nni$ and $u\in C_c^\infty(\mathbb{R}^n; \Omega^{1/2})$. 
    
    The derivative of the symbol is estimated as 
    \begin{align*}
    &\sum_{|\alpha|+|\beta|+|\gamma|\leq N^\prime}\left|\partial_q^\alpha\partial_p^\beta\partial_{q^\prime}^\gamma\left(\chi_1(q)\chi_2(q^\prime)\left(\frac{i(q-q^\prime)\cdot\partial_p}{|q-q^\prime|^2}\right)^Na(q, p, q^\prime)\right)\right| \\
    &\leq C_{NN^\prime}|\chi_1|_{N^\prime}|\chi_2|_{N^\prime}|a|_{S^m_1 (\mathbb{R}^{3n}), N+N^\prime}|q-q^\prime|^{-N}1_{\mathrm{supp}(\chi_1)}(q)1_{\mathrm{supp}(\chi_2)}(q^\prime) \\
    &\leq C_{NN^\prime}\delta^{-N}|\chi_1|_{N^\prime} |\chi_2|_{N^\prime} |a|_{S^m_1 (\mathbb{R}^{3n}), N+N^\prime}
    \end{align*}
    for all $N^\prime\in\nni$ and $N\geq m$. 
    
    Combining this estimate with Proposition \ref{prop_usual_bisymbols} \ref{enum_bdd_usual}, we finish the proof. 
    \end{proof}

\begin{proof}[Proof of Proposition \ref{prop_disjoint_union}]
By Proposition \ref{prop_usual_bisymbols} \ref{enum_bdd_usual} and Lemma \ref{lemm_scaling_bisymbols}, we obtain
\begin{align*}
&\|\psi_j\Op_\hbar (a)\psi_k\|_{L^2(\mathbb{R}^n; \Omega^{1/2})\to L^2(\mathbb{R}^n; \Omega^{1/2})} \\
&=\|\Op_\hbar (a^t_{jk})\|_{L^2(\mathbb{R}^n; \Omega^{1/2})\to L^2(\mathbb{R}^n; \Omega^{1/2})}\leq C|a^t_{jk}|_{S^0_0(\mathbb{R}^{3n}), N^\prime}\leq C|a|_{\locsymb{0}{f}, N^\prime}
\end{align*}
for some $C>0$ and $N^\prime\geq 0$ independent of $j$, $k\in \mathbb{Z}$. 

Assume that $|j-k|\geq 2$. Since $\mathrm{supp}(\psi)\subset (-1, 1)$, there exists a small $\delta>0$ such that $\mathrm{supp}(\psi)\subset (-1+\delta, 1-\delta)$. Take a smooth function $\tilde\psi: \mathbb{R}\to [0, \infty)$ such that
\[
\mathrm{supp}(\tilde\psi)\subset \left(-1+\frac{\delta}{2}, 1-\frac{\delta}{2}\right), \quad \tilde\psi=1 \text{ on } \mathrm{supp}(\psi) 
\]
and put $\tilde\psi_j:=\tilde\psi(\cdot-j)$. 
Then 
\[
\mathrm{dist}(\mathrm{supp}(\tilde\psi_j), \mathrm{supp}(\tilde\psi_k))\geq |j-k|-2+\delta>0
\]
and $\Op_\hbar (a^t_{jk})=\tilde\psi_j\Op_\hbar (a^t_{jk})\tilde\psi_k$. Hence we can apply Proposition \ref{prop_usual_bisymbols} \ref{enum_disjoint_usual} and obtain
\begin{align*}
&\|\Op_\hbar (a^t_{jk})u\|_{L^2(\mathbb{R}^n; \Omega^{1/2})}=\|\tilde\psi_j\Op_\hbar (a^t_{jk})\tilde\psi_ku\|_{L^2(\mathbb{R}^n; \Omega^{1/2})} \\
&\leq 
C_N(|j-k|-2+\delta)^{-N}|\tilde\psi|_{N^\prime}^2|a^t_{jk}|_{S^0_1(\mathbb{R}^{3n})}\|u\|_{L^2(\mathbb{R}^n; \Omega^{1/2})} \\
&\leq C_N\jbracket{j-k}^{-N}|a^t_{jk}|_{S^0_1 (\mathbb{R}^{3n}), N^\prime}\|u\|_{L^2(\mathbb{R}^n; \Omega^{1/2})} \\
&\leq C_N\jbracket{j-k}^{-N}|a|_{\locsymb{0}{f}, N^\prime}\|u\|_{L^2(\mathbb{R}^n; \Omega^{1/2})}
\end{align*}
for some $C>0$ and $N^\prime\geq 0$ independent of $a\in \locsymb{0}{f}$, $u\in C_c^\infty(\mathbb{R}^n; \Omega^{1/2})$ and $j, k\in \mathbb{Z}$. 
\end{proof}

We return to the proof of Theorem \ref{theo_L2_bdd_simplest}. We need two more lemmas. 
\begin{lemm}\label{lemm_formal_adjoint}
If $a\in \locsymb{m}f$, then
\[
    \jbracket{\psi_j\Op^t_\hbar (a)\psi_k u, v}
    =\jbracket{u, \psi_k\Op^{1-t}(\overline{a})\psi_j u} 
\]
for all $a\in \locsymb{m}{f}$ and $u, v\in C_c^\infty (\mathbb{R}^n; \Omega^{1/2})$. 
\end{lemm}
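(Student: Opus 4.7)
The plan is to verify the adjoint relation by direct manipulation of the oscillatory integrals, using a regularization to handle the absence of absolute convergence for symbols in $\locsymb{m}{f}$.

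First I would regularize: pick $\chi\in C_c^\infty(\mathbb{R}^n)$ with $\chi(0)=1$ and set $a_\varepsilon(q,p):=\chi(\varepsilon q)\chi(\varepsilon p)a(q,p)\in C_c^\infty(\mathbb{R}^{2n})$. Since $\inf f\geq 1$ we have $\locsymb{m}{f}\hookrightarrow \mathscr{S}^\prime(\mathbb{R}^{2n})$, and by dominated convergence $a_\varepsilon\to a$ and $\overline{a_\varepsilon}\to\overline{a}$ in $\mathscr{S}^\prime(\mathbb{R}^{2n})$ as $\varepsilon\to +0$. Because $\psi_j v,\psi_k u\in C_c^\infty(\mathbb{R}^n;\Omega^{1/2})$, Proposition \ref{prop_expect} applied to the shifted pairings $\jbracket{\Op^t_\hbar(a_\varepsilon)(\psi_k u),\psi_j v}$ and $\jbracket{\psi_k u,\Op^{1-t}_\hbar(\overline{a_\varepsilon})(\psi_j v)}$ yields convergence of both sides of the claimed identity to the corresponding expression with $a$. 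Thus it suffices to prove the identity for $a\in C_c^\infty(\mathbb{R}^{2n})$.

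For such $a$ the integrals are absolutely convergent, so Fubini applies. Writing $u=\tilde u|\diff q|^{1/2}$, $v=\tilde v|\diff q|^{1/2}$, the left-hand side unfolds to
\begin{align*}
&\jbracket{\psi_j\Op^t_\hbar(a)\psi_k u,v} \\
&=\frac{1}{(2\pi\hbar)^n}\int_{\mathbb{R}^{3n}}\overline{\tilde v(q)}\psi_j(r)a(tq+(1-t)q',p)e^{ip\cdot(q-q')/\hbar}\psi_k(r')\tilde u(q')\,\diff q'\diff p\,\diff q.
\end{align*}
Swapping the dummy variables $q\leftrightarrow q'$ and using $tq'+(1-t)q=(1-t)q+tq'$ together with $e^{ip\cdot(q'-q)/\hbar}=e^{-ip\cdot(q-q')/\hbar}$, the integral becomes
\begin{align*}
&\frac{1}{(2\pi\hbar)^n}\int_{\mathbb{R}^{3n}}\tilde u(q)\psi_k(r)a((1-t)q+tq',p)e^{-ip\cdot(q-q')/\hbar}\psi_j(r')\overline{\tilde v(q')}\,\diff q'\diff p\,\diff q,
\end{align*}
which, after pulling the outer complex conjugate inside and using $\overline{\overline{a}\,e^{ip\cdot(q-q')/\hbar}}=a\,e^{-ip\cdot(q-q')/\hbar}$, is precisely the unfolding of $\jbracket{u,\psi_k\Op^{1-t}_\hbar(\overline{a})\psi_j v}$. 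This establishes the identity for the regularized symbol, and the first paragraph closes the argument.

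The only real obstacle is the lack of absolute convergence of the oscillatory integral for general $a\in\locsymb{m}{f}$; once the regularization is in place and Proposition \ref{prop_expect} is invoked, the remaining computation is a straightforward change of variables. The compactly supported cutoffs $\psi_j v$ and $\psi_k u$ serve precisely to place the pairings in the scope of Proposition \ref{prop_expect}, which is why the lemma is stated with these cutoffs rather than for bare $\Op^t_\hbar(a)$.
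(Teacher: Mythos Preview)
Your argument is correct. The paper's own proof is a single line invoking Proposition \ref{prop_smoothness}: the integration-by-parts estimate \eqref{eq_d_to_e} proved there shows that, once $\psi_k u$ and $\psi_j v$ are compactly supported, the integrand acquires arbitrary polynomial decay in $p$, so the full triple integral over $(q,q',p)$ is already absolutely convergent and Fubini applies directly---no regularization of the symbol is needed.

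Your route is the complementary one: instead of making the oscillatory integral absolutely convergent by integration by parts, you truncate $a$ to $a_\varepsilon\in C_c^\infty$, do the (now trivial) Fubini computation, and pass to the limit via Proposition \ref{prop_expect}. Both arguments are standard and equally short in spirit; the paper's has the minor advantage of not introducing an auxiliary approximation, while yours has the advantage that the actual swap-of-variables step is performed on an honest Lebesgue integral rather than an oscillatory one.
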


\begin{proof}
Proposition \ref{prop_smoothness} justifies the changing of order of integration. 
\end{proof}

\begin{lemm}[Cotlar-Stein lemma]\label{lemm_cotlar_stein}
Let $\{A_\alpha: \mathcal{H}_1\to \mathcal{H}_2\}_{\alpha\in \Lambda}$ be a countable family of bounded operators between two Hilbert spaces $\mathcal{H}_1$ and $\mathcal{H}_2$. If 
\[
\sup_{\alpha\in \Lambda}\sum_{\beta\in \Lambda}\|A_\alpha^*A_\beta\|^{1/2}\leq M \text{ and } 
\sup_{\alpha\in \Lambda}\sum_{\beta\in \Lambda}\|A_\alpha A_\beta^*\|^{1/2}\leq M, 
\]
then 
\[
A:=\sum_{\alpha\in \Lambda}A_\alpha
\]
converges in a strong operator topology and $\|A\|\leq M$. 
\end{lemm}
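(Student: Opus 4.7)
The plan is to use the standard $TT^*$ / $T^*T$ trick to first reduce to the estimate of finite sums, and then handle the strong operator convergence by a separate density argument. Throughout I will denote by $S_F := \sum_{\alpha \in F} A_\alpha$ the partial sum over a finite subset $F \subset \Lambda$; the main task is to show $\|S_F\| \le M$ uniformly in $F$.

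Fix a finite $F$ and an integer $N \ge 1$. Since $S_F^* S_F$ is self-adjoint, $\|S_F\|^{2N} = \|(S_F^* S_F)^N\|$. Expanding gives
\[
(S_F^* S_F)^N = \sum_{\alpha_1,\beta_1,\ldots,\alpha_N,\beta_N \in F} A_{\alpha_1}^* A_{\beta_1} A_{\alpha_2}^* A_{\beta_2} \cdots A_{\alpha_N}^* A_{\beta_N}.
\]
I will estimate each summand in two ways: first by grouping consecutive pairs as $(A_{\alpha_i}^* A_{\beta_i})$, giving $\prod_i \|A_{\alpha_i}^* A_{\beta_i}\|$, and second by grouping as $A_{\alpha_1}^* \cdot \prod_{i=1}^{N-1}(A_{\beta_i} A_{\alpha_{i+1}}^*) \cdot A_{\beta_N}$, giving $\|A_{\alpha_1}\|\|A_{\beta_N}\|\prod_{i=1}^{N-1}\|A_{\beta_i}A_{\alpha_{i+1}}^*\|$. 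Taking the geometric mean produces the bound
\[
\|A_{\alpha_1}\|^{1/2}\|A_{\beta_N}\|^{1/2} \prod_{i=1}^N \|A_{\alpha_i}^* A_{\beta_i}\|^{1/2} \prod_{i=1}^{N-1}\|A_{\beta_i} A_{\alpha_{i+1}}^*\|^{1/2}.
\]
Since $\|A_\gamma\|^2 = \|A_\gamma^* A_\gamma\| \le M^2$ from the first hypothesis (taking $\alpha=\beta=\gamma$), the outer factors are bounded by $M$. Summing in the order $\beta_N, \alpha_N, \beta_{N-1}, \ldots, \beta_1$ and using the two hypotheses alternately (note that $\|A_\alpha A_\beta^*\| = \|A_\beta A_\alpha^*\|$, so the second hypothesis also gives $\sum_\alpha \|A_\alpha A_\beta^*\|^{1/2} \le M$ after an adjoint), each of the $2N-1$ interior sums contributes a factor $M$. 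What remains is a free sum over $\alpha_1$, yielding $\|(S_F^* S_F)^N\| \le |F|\,M^{2N}$, hence $\|S_F\| \le |F|^{1/(2N)} M$. Letting $N \to \infty$ gives $\|S_F\| \le M$.

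For strong convergence, I will show that the net $\{S_F y\}_F$ indexed by finite subsets of $\Lambda$ (ordered by inclusion) is Cauchy for every $y \in \mathcal{H}_1$. Let $D \subset \mathcal{H}_1$ be the subspace of vectors $y$ for which $\sum_\alpha A_\alpha y$ converges. The uniform bound $\|S_F\|\le M$ implies $D$ is closed. To see $D$ is dense, note that for $y = A_\beta^* z$ with $\beta \in \Lambda$ and $z \in \mathcal{H}_2$, the estimate $\|A_\alpha y\| \le \|A_\alpha A_\beta^*\|\,\|z\|$ combined with $\|A_\alpha A_\beta^*\| \le M\,\|A_\alpha A_\beta^*\|^{1/2}$ (since each term is $\le M$) and the second hypothesis yields $\sum_\alpha \|A_\alpha y\| \le M^2 \|z\|$, so the series converges absolutely and $y \in D$. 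Therefore $\overline{\sum_\beta \mathrm{Ran}(A_\beta^*)} \subset D$. On the orthogonal complement $\bigcap_\beta \ker A_\beta$, every partial sum $S_F y$ is zero, so such $y$ are trivially in $D$. Thus $D = \mathcal{H}_1$, giving SOT convergence, and the norm bound $\|A\| \le M$ follows by passing to the limit in $\|S_F\| \le M$.

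The main obstacle is the combinatorial bookkeeping for the summation over the $(2N)$-tuples: one must verify that after applying the two hypotheses alternately, exactly $2N-1$ factors of $M$ are produced and the only surviving free index is the first one, which contributes the crucial $|F|$ factor that vanishes on taking the $(2N)$-th root as $N \to \infty$. The density argument for SOT convergence is softer but relies on the already-established uniform bound, so it should be carried out only after the norm estimate is in place.
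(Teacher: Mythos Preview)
Your proof is correct and follows the standard textbook argument for the Cotlar--Stein lemma. The paper, however, does not prove this lemma at all: it simply cites Martinez and Zworski for the proof and moves on. What you have written is essentially the argument one finds in those references (the $T^*T$ trick with the $2N$-th power expansion, geometric-mean bound on each summand, alternating use of the two hypotheses leaving one free index, then $N\to\infty$; followed by a density argument for strong convergence using the uniform bound on partial sums and absolute convergence on $\mathrm{Ran}(A_\beta^*)$). So there is nothing to correct, but also nothing to compare against in the paper itself.
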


We can find the proof of the Cotlar-Stein lemma in \cite{Martinez02}, \cite{Zworski12} for instance. 

\begin{proof}[Proof of Theorem \ref{theo_L2_bdd_simplest}]
We set $A_{jk}:=\psi_j\Op^t_\hbar (a)\psi_k$. By the Calder\'on-Vaillancourt theorem, $A_{jk}$ are bounded operators on $L^2(\mathbb{R}^n; \Omega^{1/2})$. Thus the adjoint operator $A_{jk}^*$ exists. Since 
\[
A_{jk}A_{lm}^*=0 \quad \text{if } |k-m|\geq 2
\]
and 
\[
A_{jk}^*A_{lm}=0 \quad \text{if } |j-l|\geq 2
\]
by Lemma \ref{lemm_formal_adjoint}, we obtain the estimates
\[
\sum_{l, m\in \mathbb{Z}}\|A_{jk}A_{lm}^*\|^{1/2}\leq C|a|_{\locsymb{0}{f}, N}\sum_{\substack{l, m\in \mathbb{Z} \\ |k-m|\leq 1}}\jbracket{j-k}^{-3/2}\jbracket{l-m}^{-3/2}\leq C|a|_{\locsymb{0}{f}, N}
\]
and 
\[
\sum_{l, m\in \mathbb{Z}}\|A_{jk}^*A_{lm}\|^{1/2}\leq C|a|_{\locsymb{0}{f}, N}\sum_{\substack{l, m\in \mathbb{Z} \\ |j-l|\leq 1}}\jbracket{j-k}^{-3/2}\jbracket{l-m}^{-3/2}\leq C|a|_{\locsymb{0}{f}, N}
\]
for some $C>0$ and $N\geq 0$ independent of $t\in [0, 1]$, $a\in \locsymb{0}{f}$ and $j$, $k\in\mathbb{Z}$ by Proposition \ref{prop_disjoint_union}. Hence, by the Cotlar-Stein lemma (Lemma \ref{lemm_cotlar_stein}), we obtain
\[
\left\|\sum_{j, k\in \mathbb{Z}}A_{jk}\right\|_{L^2(\mathbb{R}^n; \Omega^{1/2})\to L^2(\mathbb{R}^n; \Omega^{1/2})}\leq C|a|_{\locsymb{0}{f}, N}. \qedhere
\]
\end{proof}

\subsection{Changing angular coordinates}

For constructing pseudodifferential operators on manifolds, we need to change angular variables. The following theorem will be employed in the proof of Theorem \ref{theo_psido_local_global}. 

\begin{theo}\label{theo_changing_ang_variables}
    Let $\varphi^\prime: V^\prime_1 \to V^\prime_2 $ be a diffeomorphism between bounded open subsets $V^\prime_1, V^\prime_2$ of $\mathbb{R}^{n-1}$ and set $\varphi:=\rmop{id}\times \varphi^\prime: \mathbb{R}\times V^\prime_1 \to \mathbb{R}\times V^\prime_2$. Then for all $a\in \locsymb{m}{f}$ with $\rmop{supp}a \subset \mathbb{R}\times V_1^\prime$, there exists a symbol $a_\varphi\in \locsymb{m}{f}$ such that the relation 
    \[
        \varphi_*\Op^t_\hbar  (a)\varphi^*=\Op^t_\hbar  (a_\varphi)
    \]
    holds. Moreover, $a_\varphi$ has an asymptotic expansion 
    \[
        a^t_\varphi 
        \sim \sum_{j=0}^\infty \hbar^j a^t_{\varphi, j}, 
        \quad a_{\varphi, j}\in \locsymb{m- j}{f}
    \]
    with 
    \[
        a^t_{\varphi, 0}(q, p)=\tilde\varphi_* a(q, p). 
    \]
    and $\rmop{supp}a^t_{\varphi, j}\subset \rmop{supp}\tilde\varphi_* a$ for all $j \in \nni$. 
\end{theo}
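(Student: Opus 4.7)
My plan is to reduce the assertion to Proposition \ref{prop_bisymbol_symbol} by expressing $\varphi_*\Op^t_\hbar(a)\varphi^*$ as $\Op_\hbar(b)$ for a suitable bisymbol $b \in B\locsymb{m}{f, t}$, and then setting $a_\varphi := b^t$. First I would compute the Schwartz kernel of $\varphi_*\Op^t_\hbar(a)\varphi^*$ by carefully tracking the half-density pullback and pushforward rules and changing variable $q' = \varphi^{-1}(\tilde q')$ in the inner integral. This produces an oscillatory integral with the twisted phase $p \cdot (\varphi^{-1}(\tilde q) - \varphi^{-1}(\tilde q'))/\hbar$ whose amplitude carries the Jacobian factors $|\det J\varphi^{-1}(\tilde q)|^{1/2}|\det J\varphi^{-1}(\tilde q')|^{1/2}$.

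Next, Taylor's theorem writes $\varphi^{-1}(\tilde q) - \varphi^{-1}(\tilde q') = \Psi(\tilde q, \tilde q')(\tilde q - \tilde q')$ with $\Psi$ smooth, block-diagonal, and satisfying $\Psi(\tilde q, \tilde q) = J\varphi^{-1}(\tilde q) = \mathrm{diag}(1, J(\varphi^\prime)^{-1}(\tilde\theta))$. The linear substitution $\tilde p = {}^t\Psi(\tilde q, \tilde q')p$ converts the phase to the standard form $\tilde p \cdot (\tilde q - \tilde q')/\hbar$ and leaves the amplitude
\[
b(\tilde q, \tilde p, \tilde q') = a\bigl(t\varphi^{-1}(\tilde q)+(1-t)\varphi^{-1}(\tilde q'),\,{}^t\Psi(\tilde q, \tilde q')^{-1}\tilde p\bigr)\, J_0(\tilde q, \tilde q'),
\]
where $J_0$ collects the half-density and momentum-substitution Jacobians and is a smooth bounded function of $(\tilde\theta, \tilde\theta')$ alone. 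The main technical step is verifying $b \in B\locsymb{m}{f, t}$. The decisive observations are that $\Psi$, $\Psi^{-1}$, and $J_0$ are all independent of $r, r'$, so $\partial_r, \partial_{r'}$ derivatives act only on $a$ and preserve its symbol estimates; that boundedness of $V_1^\prime, V_2^\prime$ yields uniform bounds for all derivatives of $\Psi^{\pm 1}$ and $(\varphi^\prime)^{-1}$; and that the substitution ${}^t\Psi^{-1}\tilde p$ preserves the weight $\jbracket{\rho \oplus f^{-1}\tilde\eta}$ up to multiplicative constants. The awkward terms in which $f^{-1}\partial_{\tilde\theta}$ hits the $\tilde\eta$-slot of $a$ contribute factors $|\tilde\eta| \le f\jbracket{\rho \oplus f^{-1}\tilde\eta}$, which are absorbed by the gain $|\partial_\eta a| \lesssim f^{-1}\jbracket{\rho \oplus f^{-1}\eta}^{m-1}$ coming from $a\in\locsymb{m}{f}$. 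Induction on the order of derivative then completes the bisymbol estimate.

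Finally, Proposition \ref{prop_bisymbol_symbol} delivers $a_\varphi := b^t \in \locsymb{m}{f}$ with the identity $\Op_\hbar(b) = \Op^t_\hbar(a_\varphi)$ and the asymptotic expansion $a_\varphi \sim \sum_j \hbar^j a^t_{\varphi, j}$ with $a^t_{\varphi, j} \in \locsymb{m-j}{f}$, each term being a universal derivative of $b$ restricted to the diagonal $\tilde q' = \tilde q$. On that diagonal, ${}^t\Psi(\tilde q, \tilde q)^{-1} = {}^tJ\varphi(\varphi^{-1}(\tilde q))$, and a short computation shows $J_0(\tilde q, \tilde q) = 1$, so $a^t_{\varphi, 0}(\tilde q, \tilde p) = a(\varphi^{-1}(\tilde q),\,{}^tJ\varphi(\varphi^{-1}(\tilde q))\tilde p) = \tilde\varphi_* a(\tilde q, \tilde p)$, as required. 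The support claim follows because each $a^t_{\varphi, j}$ is a linear combination of derivatives of $b|_{\tilde q' = \tilde q}$ whose dependence on $a$ is only through values and derivatives at the point $\tilde\varphi^{-1}(\tilde q, \tilde p)$, hence vanishes outside $\rmop{supp}\tilde\varphi_* a$. The principal obstacle is the bisymbol-class verification above — technically demanding but ultimately routine once one exploits the $r$-independence of the angular diffeomorphism.
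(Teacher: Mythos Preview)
Your approach is correct and is a genuinely different route from the paper's. You linearize the twisted phase via the Kuranishi matrix $\Psi$, produce a bisymbol, and then invoke Proposition~\ref{prop_bisymbol_symbol} to convert it to a single symbol; the symbol estimate for $b\in B\locsymb{m}{f,t}$ is handled exactly as you say, the crucial cancellation being that the factor $|\tilde\eta|\le f\,\jbracket{\rho\oplus f^{-1}\tilde\eta}$ arising when $f^{-1}\partial_{\tilde\theta}$ hits the momentum argument is compensated by the gain $f^{-1}\jbracket{\cdot}^{-1}$ from $\partial_\eta a$. The paper, by contrast, never linearizes the phase and never appeals to Proposition~\ref{prop_bisymbol_symbol}. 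It writes down an explicit oscillatory-integral formula \eqref{eq_btphi} for $a^t_\varphi$, integrating only over the angular pair $(\theta',\eta')$ with the nonlinear phase $\eta'\cdot(\varphi'(\theta+(1-t)\theta')-\varphi'(\theta-t\theta'))-\eta\cdot\theta'$, and then verifies the $\locsymb{m}{f}$ bounds by conjugating with the radial scaling $\Theta_j(r,\theta)=(r,f(j)\theta)$, which reduces the estimate near $r=j$ to a standard $S^m_{1,0}$-type bound uniformly in $j$; the asymptotic expansion comes from stationary phase in $(\theta',\eta')$.

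Your route is more modular---all the hard analysis is already packaged in Proposition~\ref{prop_bisymbol_symbol}---whereas the paper's route keeps the argument self-contained and showcases the scaling device $\Theta_j$ that underlies the whole calculus. One technical caveat on your side: the Kuranishi matrix $\Psi(\tilde q,\tilde q')=\int_0^1 J\varphi^{-1}(s\tilde q+(1-s)\tilde q')\,ds$ is only guaranteed to be invertible near the diagonal $\tilde\theta=\tilde\theta'$, so to obtain a globally defined element of $B\locsymb{m}{f,t}$ you should insert a near-diagonal cutoff in $(\tilde\theta,\tilde\theta')$ and check that the off-diagonal remainder is harmless (or otherwise arrange the support so that $\Psi$ stays invertible). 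The paper's explicit formula \eqref{eq_btphi} avoids this manoeuvre.
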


\begin{proof}
    By a direct calculation, we have 
    \begin{align*}
        &\varphi^*\Op^t_\hbar  (a)\varphi_*(v|\diff q|^{1/2}) \\
        &=\frac{1}{(2\pi\hbar)^n}\int_{\mathbb{R}^{2n}}b^t_\varphi (q, p, q^\prime) e^{ip\cdot (\varphi(q)-\varphi(q^\prime))/\hbar}v(q^\prime)\, \diff q^\prime \diff p^\prime |\diff q|^{1/2}
    \end{align*}
    where 
    \[
        b^t_\varphi (q, p, q^\prime):=a(t\varphi (q)+(1-t)\varphi (q^\prime), p)|\det \partial \varphi^\prime (\theta)|^{1/2}|\det \partial \varphi^\prime (\theta^\prime)|^{1/2}. 
    \]
    We note that 
    \[
        t\varphi (q)+(1-t)\varphi (q^\prime)=(tr+(1-t)r, t\varphi^\prime (\theta)+(1-t)\varphi^\prime (\theta^\prime))
    \]
    and 
    \[
        p\cdot (\varphi (q)-\varphi (q^\prime))=\rho (r-r^\prime)+\eta\cdot (\varphi^\prime (\theta)-\varphi^\prime (\theta^\prime)). 
    \]
    Then we find a symbol $a^t_\varphi (q, p)$ satisfying  
    \[
        \int_{\mathbb{R}^{n-1}}b^t_\varphi (q, p, q^\prime) e^{i\eta\cdot (\varphi^\prime (\theta)-\varphi(\theta^\prime))/\hbar}\, \diff \eta
        =\int_{\mathbb{R}^{n-1}}a_\varphi (tq+(1-t)q^\prime, p) e^{i\eta\cdot (\theta-\theta^\prime)/\hbar}\, \diff \eta
    \]
    and obtain 
    \begin{equation}
        \label{eq_btphi}
        \begin{aligned}
        &a^t_\varphi (q, p) \\
        &=\frac{1}{(2\pi\hbar)^{n-1}} \int_{\mathbb{R}^{2n-2}} 
        b(q, p, \theta^\prime, \eta^\prime)
        e^{i\eta^\prime \cdot (\varphi^\prime (\theta+(1-t)\theta^\prime)-\varphi^\prime (\theta-t\theta^\prime))/\hbar-i\eta\cdot \theta^\prime/\hbar}\, \diff \theta^\prime \diff \eta^\prime
        \end{aligned}
    \end{equation}
    where 
    \begin{align*}
        b(q, p, \theta^\prime, \eta^\prime) 
        :=&a(r, t\varphi^\prime (\theta+(1-t)\theta^\prime)+(1-t)\varphi^\prime (\theta-t\theta^\prime), \rho, \eta^\prime) \\
        &\times |\det \partial \varphi^\prime (\theta+(1-t)\theta^\prime)|^{1/2}|\det \partial \varphi^\prime (\theta-t\theta^\prime)|^{1/2}. 
    \end{align*}
    We define $\Theta_j(r, \theta):=(r, f(j)\theta)$ and ${\widetilde\Theta}_j(r, \theta, \rho, \eta):=(r, f(j)\theta, \rho, f(j)^{-1}\eta)$. Then we have 
    \[
        |\partial_q^\alpha \partial_p^\beta (\widetilde{\Theta}_{j*}a_\varphi)(q, p)|
        \leq C\jbracket{\rho \oplus \frac{f(j)}{f(r)}\eta}^{m-|\beta|}
        \leq C\jbracket{p}^{m-|\beta|}
    \]
    for all $|r-j|\leq 1$ and a constant $C>0$ independent of $j$. This implies that $a_\varphi \in \locsymb{m}{f}$. 

    Moreover, we have 
    \[
        \widetilde{\Theta}_{j*}a^t_\varphi (q, p)\sim \sum_{k=0}^\infty \frac{\hbar^k}{k!}\left(-\frac{i}{2}\right)^k (D_{\theta^\prime}\cdot D_{\eta^\prime})^k b(q, p, 0, 0)
    \]
    uniformly in $r\in [j-1, j+1]$ and $j\in \mathbb{Z}$ by the method of stationary phase. 
\end{proof}

\begin{lemm}
    \label{lemm_multiplication}
    Let $a(q, p)\in \locsymb{m}{f}$ and $\chi (q)\in \locsymb{0}{1}$ (not in general $\locsymb{0}{f, 1}$). Then there exist symbols $\chi \#^t a, a\#^t \chi \in \locsymb{m}{f}$ such that 
    \begin{align}
        &\chi \Op^t_\hbar  (a)=\Op^t_\hbar  (\chi \#^t a), \label{eq_multiplication_left}\\
        &\Op^t_\hbar  (a)\chi=\Op^t_\hbar  (a \#^t \chi). \label{eq_multiplication_right}
    \end{align}
    Moreover, $\chi \#^t a$ and $a\#^t \chi$ have asymptotic expansions 
    \begin{equation}\label{eq_asymptotic_left}
        (\chi\#^t a)(q, p)\sim \sum_{j=0}^\infty 
        \frac{(i\hbar (1-t))^j}{j!}(\partial_{q^\prime}\cdot \partial_p)^j (a(q, p)\chi (q^\prime))|_{q^\prime=q}
    \end{equation}
    and 
    \begin{equation}
        \label{eq_asymptotic_right}
        (a\#^t \chi)(q, p)\sim \sum_{j=0}^\infty 
        \frac{(i\hbar t)^j}{j!}(\partial_{q^\prime}\cdot \partial_p)^j (a(q, p)\chi (q^\prime))|_{q^\prime=q}. 
    \end{equation}
\end{lemm}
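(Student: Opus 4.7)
The plan is to reduce both identities to Proposition \ref{prop_bisymbol_symbol} by exhibiting the left- and right-multiplication operators as quantizations of explicit bisymbols. Bringing $\chi(q)$ inside the oscillatory integral defining $\Op^t_\hbar(a)$ gives immediately
\[
\chi\,\Op^t_\hbar(a) = \Op_\hbar(c_L), \qquad c_L(q, p, q^\prime):=\chi(q)\,a(tq+(1-t)q^\prime, p),
\]
and similarly $\Op^t_\hbar(a)\,\chi = \Op_\hbar(c_R)$ with $c_R(q, p, q^\prime):=a(tq+(1-t)q^\prime, p)\,\chi(q^\prime)$.

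The key technical step will be to verify $c_L, c_R \in B\locsymb{m}{f, t}$. I would expand $\partial_q^\alpha\partial_p^\beta\partial_{q^\prime}^\gamma c_L$ by Leibniz and the chain rule: derivatives in $q$ split between $\chi(q)$ (whose derivatives are $O(1)$ uniformly, since $\chi\in\locsymb{0}{1}$) and $a(tq+(1-t)q^\prime, p)$ (contributing $t^{|\alpha_2|}$ times a derivative of $a$ after chain rule), while $\partial_{q^\prime}^\gamma$ contributes $(1-t)^{|\gamma|}$. Using the $\locsymb{m}{f}$ seminorms on $a$, this produces the bound
\[
|\partial_q^\alpha\partial_p^\beta\partial_{q^\prime}^\gamma c_L| \leq C\, f(tr+(1-t)r^\prime)^{|\alpha_2^\prime|+|\gamma^\prime|-|\beta^\prime|}\,\jbracket{\rho\oplus f(tr+(1-t)r^\prime)^{-1}\eta}^{m-|\beta|},
\]
and multiplying by the $B\locsymb{m}{f, t}$ weight $f(tr+(1-t)r^\prime)^{-|\alpha^\prime|+|\beta^\prime|-|\gamma^\prime|}$ leaves only $f(tr+(1-t)r^\prime)^{-|\alpha_1^\prime|}\leq 1$, since $f\geq 1$. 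The estimate for $c_R$ is identical. Proposition \ref{prop_bisymbol_symbol} then furnishes $\chi\#^t a,\, a\#^t\chi \in \locsymb{m}{f}$ with $\Op_\hbar(c_L) = \Op^t_\hbar(\chi\#^t a)$ and $\Op_\hbar(c_R) = \Op^t_\hbar(a\#^t\chi)$, proving \eqref{eq_multiplication_left} and \eqref{eq_multiplication_right}.

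For the asymptotic expansions I would invoke the formula in Proposition \ref{prop_bisymbol_symbol}. The crucial algebraic identity is $t(q+(1-t)q^\prime)+(1-t)(q-tq^\prime)=q$, which gives
\[
c_L(q+(1-t)q^\prime, p, q-tq^\prime) = \chi(q+(1-t)q^\prime)\,a(q, p),
\]
\[
c_R(q+(1-t)q^\prime, p, q-tq^\prime) = a(q, p)\,\chi(q-tq^\prime).
\]
Since only the $\chi$ factor depends on $q^\prime$, each application of $(\partial_p\cdot\partial_{q^\prime})$ pairs a $\partial_p$ on $a$ with a $\partial_{q^\prime}$ on $\chi$, and the chain rule produces a factor $(1-t)$ per step in the left case and $-t$ per step in the right case. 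Combined with the $(i\hbar)^j/j!$ coefficients implicit in the stationary-phase expansion (as in the proof of Proposition \ref{prop_bisymbol_symbol}), this yields exactly \eqref{eq_asymptotic_left} and \eqref{eq_asymptotic_right}.

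The main obstacle will be the bisymbol-class verification: one must check that the mismatch between $\chi\in\locsymb{0}{1}$, whose angular derivatives come with no compensating factor of $f(r)$, and the $f$-weights built into $B\locsymb{m}{f, t}$ is harmless. This succeeds precisely because $f\geq 1$ and the residual exponent $-|\alpha_1^\prime|$ is nonpositive; once this is in hand, the remainder is a routine application of Proposition \ref{prop_bisymbol_symbol} together with the chain-rule computation above.
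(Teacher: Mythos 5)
Your proposal follows exactly the same route as the paper: exhibit $\chi\,\Op^t_\hbar(a)$ and $\Op^t_\hbar(a)\,\chi$ as quantizations $\Op_\hbar(c_L)$, $\Op_\hbar(c_R)$ of bisymbols in $B\locsymb{m}{f,t}$ and invoke Proposition~\ref{prop_bisymbol_symbol}. The paper's proof merely asserts the membership $\chi(q)\,a(tq+(1-t)q',p)\in B\locsymb{m}{f,t}$ and that the resulting expansion ``is equivalent to'' \eqref{eq_asymptotic_left}; you have correctly supplied the two details the paper leaves implicit, namely the Leibniz/chain-rule bookkeeping that reduces the $B\locsymb{m}{f,t}$-weight to the harmless factor $f(tr+(1-t)r')^{-|\alpha_1'|}\leq 1$ (using $f\geq 1$), and the algebraic identity $t(q+(1-t)q')+(1-t)(q-tq')=q$ that makes only the $\chi$-factor $q'$-dependent so each $(\partial_p\cdot\partial_{q'})$ hits $a$ with $\partial_p$ and $\chi$ with $\partial_{q'}$, producing the $(1-t)^j$ and $t^j$ coefficients.

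One small caution: in the right-multiplication case the chain rule on $\chi(q-tq')$ produces $(-t)^j$, not $t^j$, so strictly speaking you do not land ``exactly'' on the paper's stated coefficient $(i\hbar t)^j$ in \eqref{eq_asymptotic_right}. This is a sign convention issue inherited from the paper itself, which writes $(\partial_p\cdot\partial_{q'})^j$ in the statement of Proposition~\ref{prop_bisymbol_symbol} but $(i^{-1}\hbar)^{|\gamma|}$ and $(i\partial_p\cdot\partial_{q'})^l$ at different points of its proof; tracking the phases consistently gives $(-i\hbar)^j$ from the oscillatory-integral Taylor remainder, so with the $(-t)^j$ from the chain rule the signs recombine as $(it\hbar)^j$. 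It would be worth noting this cancellation explicitly rather than asserting an exact match, but your overall argument and class-membership verification are correct.
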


\begin{proof}
    Since $\chi (q)a(tq+(1-t)q^\prime, p)\in B\locsymb{m}{f, t}$, we can apply Proposition \ref{prop_bisymbol_symbol} and obtain a symbol $\chi \#^t a\in \locsymb{m}{f}$ satisfying \eqref{eq_multiplication_left} and having an asymptotic expansion 
    \[
        (\chi \#^t a)(q, p)
        \sim 
        \sum_{j=0}^\infty \frac{\hbar^j}{j !}(i\partial_p\cdot \partial_{q^\prime})^j|_{q^\prime=0}b^t (q, p, q^\prime)
    \]
    where 
    \[
        b^t (q, p, q^\prime):=\chi(q+(1-t)q^\prime)a(t(q+(1-t)q^\prime)+(1-t)(q-tq^\prime), p). 
    \]
    This asymptotic expansion is equivalent to the desired expansion \eqref{eq_asymptotic_left}. 

    We can prove \eqref{eq_multiplication_right} and \eqref{eq_asymptotic_right} similarly. 
\end{proof}

\subsection{Pseudodifferential operators on manifolds}\label{subs_psido_manifolds}

Before defining pseudodifferential operators, we introduce a class of functions dependent only on angular variables near infinity. 

\begin{defi}
    We call a function $u: M\to \mathbb{C}$ is cylindrical if there exist $R>0$ and a function $u_\mathrm{ang}: S\to \mathbb{C}$ such that $\Psi_* u (r, \theta)=u_\mathrm{ang}(\theta)$ for all $(r, \theta)\in [R, \infty)\times S$. 
\end{defi}

We introduce a partition of unity $\{\kappa_\iota\}_{\iota\in I}$ consisting of cylindrical functions subordinated to the open covering $\{U_\iota\}_{\iota\in I}$. We take a collection of subsets $\{\Gamma_\iota\}_{\iota\in I}$ in Definition \ref{defi_symbol_class_manifold} such that $\rmop{supp}\kappa_\iota \subset \varphi_\iota^{-1}(\Gamma_\iota)$ for all $\iota\in I$. 

\begin{defi}
    Take a collection of cylindrical functions $\{\chi_\iota\}_{\iota\in I}$ such that $\rmop{supp}\chi_\iota \subset U_\iota$ and $\chi_\iota=1$ near $\rmop{supp}\kappa_\iota$. For $a\in S^m_f (T^*M)$, we define a pseudodifferential operator $\Op^t_{M, \hbar}(a)$ with the symbol $a$ as 
    \begin{equation}\label{eq_psido_manifold_defi}
     \Op^t_{M, \hbar}(a)   
    :=\sum_{\iota\in I} \chi_\iota \varphi_\iota^* \Op^t_\hbar  (\tilde\varphi_{\iota*}(\kappa_\iota a))\varphi_{\iota*}\chi_\iota.  
    \end{equation}
\end{defi}

We can associate a pseudodifferential operator from a collection of locally defined symbols. 

\begin{theo}
    \label{theo_psido_local_global}
    Assume that $a_\iota\in \locsymb{m}{f}$ satisfies $\rmop{supp}a_\iota \subset \rmop{supp}\varphi_{\iota*}\kappa_\iota$. Then there exists a symbol $a\in S^m_f (T^*M)$ such that 
    \[
        \sum_{\iota\in I} \chi_\iota \varphi_\iota^* \Op^t_\hbar  (a_\iota)\varphi_{\iota*}\chi_\iota 
        =\Op^t_{M, \hbar}(a)+O_{L^2\to L^2}(\hbar^\infty). 
    \]
    The symbol $a$ has an asymptotic expansion 
    \[
        a(q, p)\sim \sum_{j=0}^\infty \hbar^j a_j (q, p), \quad a_j \in S^{m-j}_f (T^*M)
    \]
    with 
    \[
        a_0(q, p):=\sum_{\iota\in I} \tilde\varphi_\iota^*a_\iota (q, p). 
    \]
\end{theo}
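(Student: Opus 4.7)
The plan is to construct $a$ by iterative correction in powers of $\hbar$ and close the construction by Borel summation. As principal symbol, set $a_0 := \sum_\iota \tilde\varphi_\iota^* a_\iota$, extended by zero off each $T^*U_\iota$. This is well-defined and smooth because each $a_\iota$ is supported in $\rmop{supp}(\varphi_{\iota*}\kappa_\iota) \subset \varphi_\iota(U_\iota)$ and $\rmop{supp}\kappa_\iota$ lies in the interior of $U_\iota$ in the angular directions, so each $\tilde\varphi_\iota^* a_\iota$ admits a smooth extension by zero to $T^*M$. The seminorm bounds of Definition \ref{defi_symbol_class_manifold} follow term-by-term from the $\locsymb{m}{f}$ estimates on each $a_\iota$ together with the compatibility of the chosen $\Gamma_\iota$ with $\rmop{supp}\kappa_\iota$, so $a_0 \in S^m_f(T^*M)$.

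To compute the difference $T - \Op^t_{M,\hbar}(a_0)$, where $T := \sum_\iota \chi_\iota \varphi_\iota^* \Op^t_\hbar(a_\iota) \varphi_{\iota*} \chi_\iota$, I insert the partition of unity $1 = \sum_{\iota'}\kappa_{\iota'}$ into each summand of $T$ and use Lemma \ref{lemm_multiplication} to absorb the multiplications by $\kappa_{\iota'}$ and $\chi_\iota$ into the local symbol, producing a stationary-phase expansion in $\hbar$. Because $\chi_\iota = 1$ near $\rmop{supp}\kappa_\iota \supset \rmop{supp}a_\iota$, every expansion term involving a derivative of $\varphi_{\iota*}\chi_\iota$ has support disjoint from $\rmop{supp}a_\iota$, so its contribution is $O_{L^2 \to L^2}(\hbar^\infty)$ by the integration-by-parts estimate of Proposition \ref{prop_usual_bisymbols}\ref{enum_disjoint_usual}. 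For the cross terms with $\iota' \neq \iota$, Theorem \ref{theo_changing_ang_variables} rewrites the operator originally living in chart $\iota$ as one in chart $\iota'$; the resulting leading symbol exactly cancels the off-diagonal contribution $\tilde\varphi_{\iota*}(\kappa_\iota \tilde\varphi_{\iota'}^* a_{\iota'})$ appearing in $\tilde\varphi_{\iota*}(\kappa_\iota a_0)$, while the diagonal $\iota' = \iota$ term matches $\varphi_{\iota*}\kappa_\iota \cdot a_\iota$. Hence $T - \Op^t_{M,\hbar}(a_0) = \hbar\,\Op^t_{M,\hbar}(a_1') + O_{L^2 \to L^2}(\hbar^\infty)$ for some $a_1' \in S^{m-1}_f(T^*M)$, and I set $a_1 := a_1'$.

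The iteration is then standard: having constructed $a_0,\ldots,a_N$ with $a_j \in S^{m-j}_f(T^*M)$ and
\[
T - \Op^t_{M,\hbar}\!\Bigl(\sum_{j=0}^N \hbar^j a_j\Bigr) = \hbar^{N+1}\,\Op^t_{M,\hbar}(a_{N+1}') + O_{L^2 \to L^2}(\hbar^\infty),
\]
the same calculus (Lemma \ref{lemm_multiplication} combined with Theorem \ref{theo_changing_ang_variables}) produces $a_{N+1} := a_{N+1}' \in S^{m-N-1}_f(T^*M)$. A Borel summation in the class $S^m_f(T^*M)$ — multiplying each $a_j$ by a cutoff $\chi(\hbar/\varepsilon_j)$ with $\varepsilon_j > 0$ chosen sufficiently small with respect to the $S^{m-j}_f$ seminorms — then yields $a \in S^m_f(T^*M)$ with $a \sim \sum_j \hbar^j a_j$. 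For each $N$, the residual symbol lies in $\hbar^{N+1} S^{m-N-1}_f(T^*M)$ modulo smoothing errors, and Theorem \ref{theo_L2_bdd_simplest} converts this into an $O(\hbar^{N+1})$ operator-norm bound, establishing the claimed $O(\hbar^\infty)$ estimate.

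The main technical obstacle is the simultaneous bookkeeping of two distinct sources of $\hbar$-corrections at each step: the cutoff expansion from Lemma \ref{lemm_multiplication} and the coordinate-change expansion from Theorem \ref{theo_changing_ang_variables}. Both generate their own stationary-phase series, and matching them against the expansion of $\Op^t_{M,\hbar}(\sum_{j\leq N}\hbar^j a_j)$ requires careful tracking of the support constraints and of the Jacobian factors on polar overlaps. Uniformity in $\iota$ is guaranteed by the finiteness of $I$ and by the scaling-invariance of the seminorms under the operators $\Theta^t_{jk}$ already exploited in Section \ref{sec_bisymbol}.
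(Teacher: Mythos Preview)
Your proposal is correct and follows essentially the same approach as the paper. Both arguments define $a_0 := \sum_\iota \tilde\varphi_\iota^* a_\iota$, use Theorem~\ref{theo_changing_ang_variables} to pass between overlapping charts and Lemma~\ref{lemm_multiplication} to absorb the cutoffs $\chi_\iota,\kappa_\iota$, then iterate and close by Borel summation; the only organizational difference is that the paper expands $\Op^t_{M,\hbar}(a_0)$ from its definition and reduces it to $T$ up to lower order (so the partition of unity $\sum_\iota \kappa_\iota$ is already built into the quantization formula), whereas you insert $\sum_{\iota'}\kappa_{\iota'}$ into $T$ and compare in the other direction.
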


\begin{proof}
    We calculate $\Op^t_{M, \hbar} (a_0)$ explicitly. Then we have 
    \begin{align}
        \Op^t_{M, \hbar} (a_0)
        &=\sum_{U_\iota \cap U_{\iota^\prime}\neq \varnothing} 
        \chi_\iota \varphi_\iota^* \Op^t_\hbar (\tilde\varphi_{\iota*}(\kappa_\iota \tilde\varphi_{\iota^\prime}^*a_{\iota^\prime}))(\varphi_{\iota*}\chi_\iota)\varphi_{\iota*} \nonumber\\
        &=\sum_{U_\iota \cap U_{\iota^\prime}\neq \varnothing} 
        \chi_\iota \varphi_{\iota^\prime}^* \Op^t_\hbar (a_{\iota^\prime}\varphi_{\iota^\prime *}\kappa_\iota+O_{\locsymb{m-1}{f}}(\hbar))(\varphi_{\iota^\prime *}\chi_\iota)\varphi_{\iota^\prime *} \nonumber\\
        &=\sum_{\iota^\prime \in I} 
        \varphi_{\iota^\prime}^* \Op^t_\hbar (a_{\iota^\prime}-\hbar b_{1, \iota^\prime})\varphi_{\iota^\prime *}+O_{L^2\to L^2}(\hbar^\infty)  \label{eq_a0_local_global_III}
    \end{align}
    by Theorem \ref{theo_changing_ang_variables}, the assumption $\rmop{supp}a_\iota \subset \rmop{supp}\varphi_{\iota*}\kappa_\iota$ and Lemma \ref{lemm_multiplication}. Here $b_{1, \iota}\in \locsymb{m-1}{f}$ has an asymptotic expansion 
    \[
        b_{1, \iota}(x, \xi)\sim \sum_{j=0}^\infty \hbar^j b_{1j, \iota}(x, \xi), \quad b_{1j, \iota}\in \locsymb{m-j-1}{f}
    \]
    with $\rmop{supp} b_{1j, \iota}\subset \rmop{supp}\varphi_{\iota*}(\kappa_\iota a_0)$. 

    We repeat the same argument for $b_{10, \iota}(x, \xi)$. If we set 
    \[
        a_1(x, \xi):=-\sum_{\iota\in I} \tilde\varphi_\iota^* b_{10, \iota}(x, \xi), 
    \]
    then we have 
    \begin{equation}\label{eq_a1_local_global_III}
        \Op^t_{M, \hbar} (a_1)=\sum_{\iota\in I} \Op^t_\hbar \left( b_{10, \iota}-\hbar c_{2, \iota}\right)+O_{L^2\to L^2}(\hbar^\infty). 
    \end{equation}
    Here $c_{2, \iota}(x, \xi)\in \locsymb{m-2}{f}$ has an asymptotic expansion 
    \[
        c_{2, \iota}(x, \xi)\sim \sum_{j=0}^\infty c_{2j, \iota}(x, \xi), \quad c_{2j, \iota}\in \locsymb{m-j-2}{f}
    \]
    with $\rmop{supp} c_{2j, \iota}\subset \rmop{supp}\varphi_{\iota*}(\kappa_\iota a_0)$. 

    Summing up \eqref{eq_a0_local_global_III} and \eqref{eq_a1_local_global_III}, we obtain 
    \[
        \Op^t_{M, \hbar} (a_0+\hbar a_1)=\sum_{\iota\in I} \Op^t_\hbar (a_\iota-\hbar^2 b_{2, \iota})+O_{L^2\to L^2}(\hbar^\infty), 
    \]
    where 
    \[
        b_{2, \iota}(x, \xi):=\hbar^{-1}(b_{1, \iota}-b_{10, \iota})+c_{2, \iota} \in \locsymb{m-2}{f}. 
    \]
    $b_{2, \iota}(x, \xi)$ has an asymptotic expansion 
    \[
        b_{2, \iota}(x, \xi)\sim \sum_{j=0}^\infty \hbar^j b_{2j, \iota}(x, \xi), \quad b_{2j, \iota}\in \locsymb{m-j-2}{f}
    \]
    with $\rmop{supp} b_{2j, \iota}\subset \rmop{supp}\varphi_{\iota*}(\kappa_\iota a_0)$. 

    We repeat this argument and construct $a_j\in S^{m-j}_f (T^*M)$ such that 
    \[
        \Op^t_{M, \hbar} \left( \sum_{j=0}^N \hbar^j a_j\right)
        =\sum_{\iota\in I} \Op^t_\hbar (a_\iota- \hbar^{N+1}b_{N+1, \iota})+O_{L^2\to L^2}(\hbar^\infty)
    \]
    for all $N\in \mathbb{Z}_{\geq 0}$, where $b_{N+1}(x, \xi)\in S^{N+1-j}(T^*\mathbb{R}^n)$ has an asymptotic expansion
    \[
        b_{N+1, \iota}(x, \xi)\sim \sum_{j=0}^\infty \hbar^j b_{N+1, j, \iota}(x, \xi), \quad b_{N+1, j, \iota}\in \locsymb{m-j-N-1}{f}. 
    \]

    The desired symbol $a(x, \xi)$ is defined as an asymptotic expansion 
    \[
        a(x, \xi)\sim \sum_{j=0}^\infty \hbar^j a_j(x, \xi)
    \]
    by the Borel theorem. 
\end{proof}

Next we prove the boundedness on $L^2$ space of pseudodifferential operators with symbols in $S^0_f(T^*M)$. 

\begin{theo}\label{theo_L2_bdd_manifold}
    There exists a constant $C>0$ and an integer $N\in \nni$ such that the estimate 
    \[
        \|\Op^t_{M, \hbar}(a)\|_{L^2\to L^2}\leq C|a|_{S^0_f (T^*M), N}
    \]    
    holds for all $a\in S^0_f(T^*M)$, $t\in [0, 1]$ and $\hbar\in (0, 1]$. 
\end{theo}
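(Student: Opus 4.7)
My plan is to reduce the boundedness on the manifold to the already-proven flat Euclidean Calder\'on-Vaillancourt estimate in Theorem \ref{theo_L2_bdd_simplest}. Since the index set $I$ is finite and \eqref{eq_psido_manifold_defi} writes $\Op^t_{M, \hbar}(a)$ as a finite sum, it suffices to bound each summand $\chi_\iota \varphi_\iota^* \Op^t_\hbar(\tilde\varphi_{\iota*}(\kappa_\iota a))\varphi_{\iota*}\chi_\iota$ on $L^2(M; \Omega^{1/2})$, uniformly in $\hbar\in (0,1]$ and $t\in[0,1]$.

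The pullback $\varphi_\iota^*$ and pushforward $\varphi_{\iota*}$ act on half-densities by the Jacobian factor $|\det\partial\varphi_\iota|^{\pm 1/2}$, hence they are \emph{isometries} between $L^2(U_\iota;\Omega^{1/2})$ and $L^2(V_\iota;\Omega^{1/2})$. Multiplication by the cylindrical cutoff $\chi_\iota$ is a bounded operator with norm at most $\|\chi_\iota\|_{L^\infty}$, a constant depending only on the fixed data of the manifold. Therefore the norm of the $\iota$-th summand is controlled by
\[
\|\chi_\iota\|_{L^\infty}^2 \, \|\Op^t_\hbar(\tilde\varphi_{\iota*}(\kappa_\iota a))\|_{L^2(\mathbb{R}^n;\Omega^{1/2})\to L^2(\mathbb{R}^n;\Omega^{1/2})},
\]
and by Theorem \ref{theo_L2_bdd_simplest} this is bounded by $C|\tilde\varphi_{\iota*}(\kappa_\iota a)|_{\locsymb{0}{f}, N}$ for a fixed $N$ independent of $a$, $\hbar$, $t$.

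The main step, and the only place where care is required, is to check that the seminorms $|\tilde\varphi_{\iota*}(\kappa_\iota a)|_{\locsymb{0}{f}, N}$ are dominated by the global seminorm $|a|_{S^0_f (T^*M), N'}$ (for some larger $N'$). Here $\tilde\varphi_{\iota*}(\kappa_\iota a)$ is a priori only defined on $V_\iota\times \mathbb{R}^n\subset \mathbb{R}^{2n}$, but since $\rmop{supp}\kappa_\iota\subset \varphi_\iota^{-1}(\Gamma_\iota)$ with $\Gamma_\iota$ bounded away from $\partial V_\iota$, it extends smoothly by zero to $\mathbb{R}^{2n}$, so the $\locsymb{0}{f}$ seminorms make sense. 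For $\iota\in I_K$ the set $\Gamma_\iota$ is compact, $f(r)$ is bounded above and below on $\Gamma_\iota$, and $\jbracket{\rho\oplus f(r)^{-1}\eta}\sim \jbracket{\xi}$ there, so the $\locsymb{0}{f}$ and the standard $S^0$ estimates of $S^0_f(T^*M)$ are equivalent. For $\iota\in I_\infty$ the two seminorms are designed to match: the factor $f(r)^{-|\alpha'|+|\beta'|}$ in Definition \ref{defi_symbol_class_manifold} is precisely what converts $\partial_\theta^{\alpha'}\partial_\eta^{\beta'}$ into $(f(r)^{-1}\partial_\theta)^{\alpha'}(f(r)\partial_\eta)^{\beta'}$ in Definition \ref{defi_symbol_class}. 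Finally, since $\kappa_\iota$ is cylindrical one has $\partial_r^{\alpha_0}(f(r)^{-1}\partial_\theta)^{\alpha'}\kappa_\iota \in L^\infty$ for all $\alpha$ (using $f\geq 1$ and compactness of $S$), so a Leibniz expansion of $\kappa_\iota a$ produces finitely many symbol-class terms, each bounded by $C|a|_{S^0_f(T^*M),N'}$.

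Combining these ingredients summed over the finitely many $\iota\in I$ yields the desired estimate $\|\Op^t_{M,\hbar}(a)\|_{L^2\to L^2}\leq C|a|_{S^0_f(T^*M),N}$. I expect the technical bookkeeping in verifying the seminorm comparison (in particular the Leibniz expansion mixing the cylindrical cutoffs with the weighted derivatives $(f^{-1}\partial_\theta)^{\alpha'}$) to be the only nontrivial point; everything else reduces to the finiteness of $I$, the unitarity of the coordinate transformations on half-densities, and a direct appeal to Theorem \ref{theo_L2_bdd_simplest}.
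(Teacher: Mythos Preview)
Your proposal is correct and follows exactly the same approach as the paper's proof: reduce to the finite sum \eqref{eq_psido_manifold_defi}, use the unitarity of $\varphi_\iota^*$ and $\varphi_{\iota*}$ on half-densities, and invoke Theorem~\ref{theo_L2_bdd_simplest} for each local piece. The paper's own proof is a two-line sketch that omits the seminorm comparison you spell out (the Leibniz expansion with the cylindrical cutoffs and the observation that for $\iota\in I_K$ the weight $f(r)$ is bounded on the compact $\Gamma_\iota$); your added detail is correct and fills in what the paper leaves implicit.
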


\begin{proof}
    Each term $\chi_\iota \varphi_\iota^* \Op^t_\hbar  (\tilde\varphi_{\iota*}(\kappa_\iota a))\varphi_{\iota*}\chi_\iota$ in the definition \eqref{eq_psido_manifold_defi} of pseudodifferential operators is bounded on $L^2(M; \Omega^{1/2})$ by Theorem \ref{theo_L2_bdd_simplest} and the unitarity of the pullback $\varphi^*_\iota$ and pushforward $\varphi_{\iota*}$. Thus we obtain the boundedness of $\Op^t_{M, \hbar}(a)$. 
\end{proof}

%//////////////////////////////////////////////////////////////////////////////
\section{Resolvents of differential operators}\label{sec_ess_selfadj}

\subsection{Differential operators on manifolds}

We define a class of differential operators on manifolds. For the definition, we only employ the function $f: \mathbb{R}\to (0, \infty)$ appeared in Assumption \ref{assu_f}, and thus the definition is independent of Riemannian metrics on $M$. 

\begin{defi}\label{def_elliptic_M}
An operator $P_\hbar : C^\infty(M; \Omega^{1/2})\to C^\infty(M; \Omega^{1/2})$ is a \textit{semiclassical differential operator on} $M$ \textit{of degree at most} $m$ if $P_\hbar$ satisfies the following conditions.  
\begin{itemize}
\item $\mathrm{supp}(P_\hbar u)\subset \mathrm{supp}(u)$ for all $u\in C^\infty(M)$.  
\item For all $\iota\in I_\infty$, $\varphi_{\iota*} P_\hbar  \varphi_\iota^*$ is represented as 
\begin{equation}\label{eq_diff_local_polar}
    \varphi_{\iota*} P_\hbar  \varphi_\iota^*(v|\diff r \diff \theta|^{1/2})
    =\sum_{|\alpha|\leq m} p_\alpha (\hbar; r, \theta) (f(r)^{-1}\hbar D_\theta)^{\alpha^\prime} (\hbar D_r)^{\alpha_0}
    v |\diff r \diff \theta|^{1/2}
\end{equation}
with coefficients $p_\alpha (\hbar; r, \theta)\in C^\infty (\mathbb{R}^n)$ such that 
\[
    p_\alpha (\hbar; r, \theta)=\sum_{j=0}^{N_\alpha}\hbar^j p_{\alpha, j} (r, \theta)
\]
for some $N_\alpha\in \nni$ and 
\[
    \|(f(r)^{-1}\partial_\theta)^{\alpha^\prime} \partial_r^{\alpha_0}p_{\alpha, j} (r, \theta)\|_{L^\infty ([R, \infty)\times K)}
    \leq C_{KR\alpha} 
\]
for all $R>0$ and all compact subsets $K\subset \mathbb{R}^{n-1}$. 
\item For all $\iota\in I_K$, $\varphi_{\iota*} P_\hbar  \varphi_\iota^*$ is represented as 
\begin{equation}\label{eq_diff_local_comp}
    \varphi_{\iota*} P_\hbar  \varphi_\iota^*(v|\diff x|^{1/2})
    =\sum_{|\alpha|\leq m} p_\alpha (\hbar; x) (\hbar D_x)^\alpha
    v |\diff x|^{1/2}
\end{equation}
with coefficients $p_\alpha (\hbar; x)\in C^\infty (\mathbb{R}^n)$ such that 
\[
    p_\alpha (\hbar; x)=\sum_{j=0}^{N_\alpha}\hbar^j p_{\alpha, j} (x)
\]
for some $N_\alpha \in \nni$ and 
\[
    \|\partial_x^\alpha p_{\alpha, j} (x)\|_{L^\infty (K)}
    \leq C_{K\alpha} 
\]
for all compact subsets $K\subset \mathbb{R}^n$. 
\end{itemize}
The set of all semiclassical differential operators on $ M $ of degree at most $m$ is denoted as $\mathrm{Diff}^m_{f, \hbar} (M; \Omega^{1/2})$. 
The principal symbol $\sigma(P_\hbar): T^*M \to \mathbb{C}$ of $P \in \mathrm{Diff}^m_{f, \hbar} (M; \Omega^{1/2})$ is defined as 
\[
\sigma(P_\hbar)(\tilde\varphi^{-1}(r, \theta, \rho, \eta)):=\sum_{|\alpha|\leq m} p_{\alpha, 0} (r, \theta) (f(r)^{-1}\eta)^{\alpha^\prime} \rho^{\alpha_0}
\]
in the notation of \eqref{eq_diff_local_polar} for $\iota\in I_\infty$ and 
\[
    \sigma(P_\hbar)(\tilde\varphi^{-1}(x, \xi)):=\sum_{|\alpha|\leq m} p_{\alpha, 0} (x) \xi^\alpha
\]
in the notation of \eqref{eq_diff_local_comp} for $\iota\in I_K$. 
$\sigma( P_\hbar )$ is independent of the choice of such $\iota\in I$. $\tilde \varphi_\iota: T^* U_\iota\to  V_\iota\times \mathbb{R}^n$ is the canonical coordinates associated with $ \varphi_\iota:  U_\iota\to V_\iota$. 
\end{defi}

We define the ellipticity of differential operators on $M$. 

\begin{defi}
A differential operator $ P_\hbar \in \mathrm{Diff}^m_{f, \hbar} (M; \Omega^{1/2})$ with $\sigma (P_\hbar)(T^*M)\neq \mathbb{C}$ is \textit{elliptic} if, for all $z\in\mathbb{C}$ with $\mathrm{dist}(z, \sigma( P_\hbar )(T^* M ))>0$, the following conditions hold. 
\begin{itemize}
    \item For any $\iota \in I_\infty$, there exists a constant $C>0$ such that the inequality
    \[
    C^{-1}\jbracket{\rho\oplus f(r)^{-1}\eta}^m\leq |z-\tilde\varphi_{\iota*}\sigma(P_\hbar)(q, p)| \leq C\jbracket{\rho\oplus f(r)^{-1}\eta}^m
    \]
    holds for all $(x, \xi)\in V_\iota \times \mathbb{R}^n$. 
    \item For any $\iota \in I_K$, there exists a constant $C>0$ such that the inequality
    \[
    C^{-1}\jbracket{\xi}^m\leq |z-\tilde\varphi_{\iota*}\sigma(P_\hbar)(x, \xi)| \leq C\jbracket{\xi}^m
    \]
    holds for all $(x, \xi)\in V_\iota \times \mathbb{R}^n$. 
\end{itemize}
\end{defi}

We give examples of differential operators in our sense. First example is the Lie derivatives acting on half-densities. 

\begin{exam*}[Lie derivatives]
Let $X$ be a vector field on $M$. The Lie derivative acting on half-densities is defined as 
\begin{equation}\label{eq_defi_lie}
    \mathcal{L}_X u:=\left.\diffrac[]{t}\right|_{t=0} \varphi^*_t u
\end{equation}
where $\varphi_t(x)$ is the flow generated by $X$. In general coordinates $(x_1, \ldots, x_n)$, the Lie derivative is calculated as 
\[
    \mathcal{L}_X (v |\diff x|^{1/2})=\left(Xv+\frac{1}{2}v \sum_{j=1}^n \frac{\partial X_j}{\partial x_j}\right) |\diff x|^{1/2}
\] 
where $X=\sum_j X_j \partial_{x_j}$. Assume that the vector field $X$ such that 
\[
    X=X_1(q)\frac{\partial}{\partial r}+f(r)^{-1}\sum_{j=1}^{n-1} Y_j (q)\frac{\partial}{\partial \theta_j} 
\] 
with $X_1\in \mathcal{B}_f(V_\iota)$ and $Y_j\in \mathcal{B}_f(V_\iota)$ in any polar coordinate function $\varphi: U\to V$ in the sense of \eqref{eq_polar_defi_III}. 
For a polar coordinate function $\varphi=(r, \theta)$, we have 
\[
    \varphi_*\mathcal{L}_X \varphi^*(v|\diff q|^{1/2})
    =\left(Xv+\frac{1}{2}v \left(\frac{\partial X_1}{\partial r}+f(r)^{-1}\sum_{j=1}^{n-1}\frac{\partial Y_j}{\partial \theta_j}\right)\right) |\diff q|^{1/2}. 
\]
This equation shows that $i^{-1}\hbar \mathcal{L}_X$ belongs to $\mathrm{Diff}^1_{f, \hbar} (M; \Omega^{1/2})$ and the principal symbol is 
\[
  \sigma (i^{-1}\hbar\mathcal{L}_X)(q, p)
  =\jbracket{p, X(q)}
  =X_1(q)\rho+f(r)^{-1}\sum_{j=1}^{n-1} Y_j(q)\eta_j.   
\]

We note that the Lie derivative is calculated as 
\[
    \mathcal{L}_X (v|\vol_g|^{1/2})=\left(Xv+\frac{1}{2}v \rmop{div} X\right) |\vol_g|^{1/2}
\]
on any Riemannian manifold $(M, g)$. 
\end{exam*}

Second example is the Laplacian acting on half-densities. Before we give the example concretely, we assume a compatibility condition for the structure of ends and the Riemannian metric. 

\begin{assu}\label{assu_metric_M}
    The pushforward of $g$ by $\Psi$ is the form
    \[
        \Psi_* g(r, \theta, \diff r, \diff \theta)=h(r, \theta, \diff r, f(r)\diff \theta) 
    \]
    where $h(r, \theta, \diff r, \diff \theta)$ satisfies the following. 
    \begin{itemize}
    \item There exists a constant $C>0$ such that the inequality 
    \[
        C^{-1}h(1, \theta, \diff r, \diff \theta)\leq h(r, \theta, \diff r, \diff \theta) \leq Ch(1, \theta, \diff r, \diff \theta) 
    \]
    holds for all $(r, \theta)\in [1, \infty)\times S$. 
    \item If $(\rmop{id}\times \varphi^\prime_\iota)_*h=\sum_{j, k=1}^n h^\iota_{ij}(r, \theta) \diff q_j \diff q_k$ ($q_1=r$ and $q_j=\theta_{j-1}$ for $j\geq 1$), then the coefficients $h^\iota_{jk} (r, \theta)$ belong to $\mathcal{B}_f (\mathbb{R}_+\times V_\iota^\prime)$. 
    \end{itemize}
\end{assu}

    The Assumption \ref{assu_metric_M} includes not only the case of manifolds with conical ends $f(r)=r$ (for instance, the Euclidean space), but also that of manifolds with asymptotically hyperbolic ends $f(r)=e^r$ (for instance, the equivalent setting in Mazzeo-Melrose \cite{Mazzeo-Melrose87}). 

    Now we show that the Laplacian gives an example of differential operators in our sense. 

\begin{exam*}[Laplacian]
    For a Riemannian manifold $(M, g)$, the Laplacian $\triangle_g$ acting on half-densities is defined as 
    \[
        \triangle_g (v |\vol_g|^{1/2}):=g^{-1/2}\sum_{j, k=1}^n \frac{\partial}{\partial x_j}\left(g^{1/2}g^{jk}\frac{\partial v}{\partial x_k}\right) |\vol_g|^{1/2}. 
    \]
    Then, for a general coordinate function $\varphi$, we have  
    \[
        \varphi_*\triangle_g\varphi^*(v|\diff x|^{1/2})
        =\left( \sum_{j, k=1}^n \frac{\partial}{\partial x_j}\left(g^{jk}\frac{\partial v}{\partial x_k}\right)+V_\varphi  v\right) |\diff x|^{1/2}
    \]
    where 
    \[
        V_\varphi :=-\frac{1}{2}g^{-1/4}\sum_{j, k=1}^n \frac{\partial}{\partial x_j}\left(g^{-1/4}g^{jk}\frac{\partial g^{1/2}}{\partial x_k} \right). 
    \]
    Hence, by Assumption \ref{assu_metric_M}, we have $V_\varphi \in \mathcal{B}_f$. Thus $-\hbar^2 \triangle_g \in \rmop{Diff}^2_f (M; \Omega^{1/2})$ and 
    \[
        \sigma (-\hbar^2 \triangle_g)(x, \xi)=|\xi|_{g^*}^2. 
    \]
    In particular, if the metric $g$ is the form $g=\diff r^2+f(r)^2 h(\theta, \diff \theta)$, then 
    \[
        V_\varphi (r, \theta)=-\frac{(n-1)^2}{4}\left(\diffrac[]{r} \log f(r)\right)^2-\frac{n-1}{2}\frac{\diff^2}{\diff r^2} \log f(r)
        -f(r)^{-2}V^\prime_\varphi (\theta)
    \]
    where 
    \[
        V^\prime_\varphi (\theta):=-\frac{1}{2}h^{-1/4}\sum_{j, k=1}^{n-1}\frac{\partial}{\partial \theta_j}\left(h^{-1/4}h^{jk}\frac{\partial h^{1/2}}{\partial \theta_k} \right). 
    \]
\end{exam*}

\subsection{Composition of differential operators and pseudodifferential operators}

In this section, we consider quantization procedures $\Op^1_\hbar$. 

\begin{theo}
    \label{theo_diff_psido_composition}
    Let $P\in \mathrm{Diff}^{m_1}_{f, \hbar} (M; \Omega^{1/2})$ and $a\in S^{m_2}_f (T^*M)$. Then there exists a symbol $b\in S^{m_1+m_2}_f (T^*M)$ such that 
    \[
        P_\hbar\Op^1_{M, \hbar} (a)=\Op^1_{M, \hbar} (b)+O_{L^2\to L^2}(\hbar^\infty). 
    \]
    $b(x, \xi)$ has an asymptotic expansion 
    \begin{equation}\label{eq_asymptotic_b_III}
        b(x, \xi)\sim \sum_{j=0}^\infty \hbar^j b_j (x, \xi), \quad b_j\in S^{m_1+m_2-j}_f (T^*M)
    \end{equation}
    and 
    \begin{equation}\label{eq_principal_b}
        b_0(x, \xi)=\sigma (P_\hbar)(x, \xi)a(x, \xi). 
    \end{equation}
\end{theo}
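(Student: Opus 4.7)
The strategy is to localize via the partition of unity built into the definition of $\Op^1_{M,\hbar}$ and then reassemble using Theorem \ref{theo_psido_local_global}. Writing $\Op^1_{M,\hbar}(a) = \sum_{\iota\in I}\chi_\iota \varphi_\iota^* \Op^1_\hbar(a_\iota)\varphi_{\iota*}\chi_\iota$ with $a_\iota := \tilde\varphi_{\iota*}(\kappa_\iota a)\in \locsymb{m_2}{f}$, the locality of the differential operator $P_\hbar$ together with $\rmop{supp}\chi_\iota\subset U_\iota$ gives
\[
    P_\hbar \chi_\iota \varphi_\iota^*\Op^1_\hbar(a_\iota)\varphi_{\iota*}\chi_\iota = \varphi_\iota^* \bigl[(\varphi_{\iota*}P_\hbar\varphi_\iota^*)\tilde\chi_\iota \Op^1_\hbar(a_\iota) \tilde\chi_\iota\bigr]\varphi_{\iota*},
\]
where $\tilde\chi_\iota := \varphi_{\iota*}\chi_\iota$. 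The task then reduces to expressing the bracketed operator as $\Op^1_\hbar(c_\iota)$ for a suitable local symbol $c_\iota$, and assembling $\{c_\iota\}_{\iota\in I}$ into a global symbol on $M$.

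For $\iota\in I_\infty$, use that $\Op^1_\hbar$ is left-quantized, so $\tilde\chi_\iota\Op^1_\hbar(a_\iota) = \Op^1_\hbar(\tilde\chi_\iota a_\iota)$. A direct computation gives $(\hbar D_r)^{\alpha_0}\Op^1_\hbar(c) = \Op^1_\hbar((\rho+\hbar D_r)^{\alpha_0}c)$, and because $f(r)$ is independent of $\theta$, induction on $|\alpha'|$ yields $(f(r)^{-1}\hbar D_\theta)^{\alpha'}\Op^1_\hbar(c) = \Op^1_\hbar(f(r)^{-|\alpha'|}(\eta+\hbar D_\theta)^{\alpha'}c)$. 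Combining these and pulling out the left multiplication by $p_\alpha(\hbar;q)$ gives $(\varphi_{\iota*}P_\hbar\varphi_\iota^*)\tilde\chi_\iota\Op^1_\hbar(a_\iota) = \Op^1_\hbar(d_\iota)$ with
\[
    d_\iota := \sum_{|\alpha|\leq m_1}p_\alpha(\hbar;q)\,f(r)^{-|\alpha'|}(\eta+\hbar D_\theta)^{\alpha'}(\rho+\hbar D_r)^{\alpha_0}(\tilde\chi_\iota a_\iota).
\]
The remaining right multiplication by $\tilde\chi_\iota$ is handled by Lemma \ref{lemm_multiplication}, producing $c_\iota := d_\iota \#^1 \tilde\chi_\iota$. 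Binomially expanding each $(\rho+\hbar D_r)^{\alpha_0}(\eta+\hbar D_\theta)^{\alpha'}$, together with the finite $\hbar$-expansion of $p_\alpha$ and the asymptotic expansion of $\#^1$ from Lemma \ref{lemm_multiplication}, endows $c_\iota$ with an asymptotic expansion $c_\iota \sim \sum_{j\geq 0} \hbar^j c_{\iota,j}$. Since $\tilde\chi_\iota \equiv 1$ on a neighborhood of $\rmop{supp}a_\iota$, the leading term simplifies to $c_{\iota,0} = (\tilde\varphi_{\iota*}\sigma(P_\hbar))\cdot a_\iota$, and every $c_{\iota,j}$ is supported in $\rmop{supp}a_\iota \subset \rmop{supp}\varphi_{\iota*}\kappa_\iota$. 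The analogous calculation for $\iota\in I_K$ is strictly simpler, using Cartesian formulas without the $f(r)^{-1}$ weight.

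Applying Theorem \ref{theo_psido_local_global} to the collection $\{c_\iota\}_{\iota\in I}$ produces a global symbol $b\in S^{m_1+m_2}_f(T^*M)$ with leading term $b_0 = \sum_{\iota\in I}\tilde\varphi_\iota^* c_{\iota,0} = \sigma(P_\hbar)\sum_{\iota\in I}\kappa_\iota a = \sigma(P_\hbar)a$, and summing the local identities yields the claimed $P_\hbar\Op^1_{M,\hbar}(a) = \Op^1_{M,\hbar}(b) + O_{L^2\to L^2}(\hbar^\infty)$, the error being inherited from Theorem \ref{theo_psido_local_global}. The principal technical obstacle is the bookkeeping: one must carefully track how the weight $f(r)^{-1}$ interacts with the $p$-derivatives produced by the binomial expansions and with the $\#^1$-expansion, in order to confirm that the $j$-th coefficient $c_{\iota,j}$ (and hence $b_j$) lies in the claimed symbol class. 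The support condition $\rmop{supp}c_{\iota,j} \subset \rmop{supp}\varphi_{\iota*}\kappa_\iota$ needed to invoke Theorem \ref{theo_psido_local_global} is secured by the choice $\tilde\chi_\iota \equiv 1$ near $\rmop{supp}a_\iota$, which forces all contributions from derivatives of $\tilde\chi_\iota$ to be supported where $a_\iota$ vanishes.
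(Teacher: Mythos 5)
Your proposal is correct and follows the same route as the paper: localize with the partition of unity built into the definition of $\Op^1_{M,\hbar}$, use that $\Op^1_\hbar$ is the left quantization so that multiplying on the left by the differential operator replaces $\hbar D_q$ on the operator side by $p+\hbar D_q$ on the symbol side (your $d_\iota$ is the paper's $b_\iota$), and reassemble with Theorem~\ref{theo_psido_local_global}. The single departure from the paper is the detour through Lemma~\ref{lemm_multiplication}: you absorb the right cutoff $\tilde\chi_\iota$ into the symbol to obtain $c_\iota = d_\iota\#^1\tilde\chi_\iota$, whereas the paper stops at $d_\iota$ and leaves $\tilde\chi_\iota$ in place, so that $\chi_\iota\varphi_\iota^*\Op^1_\hbar(d_\iota)\varphi_{\iota*}\chi_\iota$ is already in the exact form Theorem~\ref{theo_psido_local_global} asks for. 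Your variant works, but it silently relies on two facts you do not check: the hypothesis of Theorem~\ref{theo_psido_local_global} is a support condition on the full symbol $c_\iota$, not only on the asymptotic coefficients $c_{\iota,j}$ (it does hold here, because the construction of $\#^1$ in Proposition~\ref{prop_bisymbol_symbol} never spreads the $q$-support, but that is a separate observation from the one you cite); and the final operator $\sum_\iota\varphi_\iota^*\Op^1_\hbar(c_\iota)\varphi_{\iota*}$ you arrive at lacks the pair of $\chi_\iota$ cutoffs that the conclusion of Theorem~\ref{theo_psido_local_global} is phrased in, so one more $O_{L^2\to L^2}(\hbar^\infty)$ re-insertion of the cutoffs is still required. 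Dropping the $\#^1$ step and working with $d_\iota$ directly, as the paper does, removes both complications.
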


\begin{proof}
    We decompose $P_\hbar\Op^1_{M, \hbar} (a)$ into 
    \[
        P_\hbar \Op^1_{M, \hbar} (a)=\sum_{\iota\in I} P_\hbar\chi_\iota\varphi_\iota^* \Op^1_\hbar (\tilde\varphi_{\iota*}(\kappa_\iota a))\varphi_{\iota*}\chi_\iota. 
    \]
    We consider the case of $\iota\in I_\infty$. In polar coordinates, $P$ is written as 
    \[
        \varphi_{\iota*}P_\hbar \varphi_\iota^* =\sum_{|\alpha|\leq m} p^\iota_\alpha (\hbar; q)(f(r)^{-1}\hbar D_\theta)^{\alpha^\prime}(\hbar D_r)^{\alpha_0}
    \]
    with $p^\iota_\alpha \in \mathcal{B}_f$. Then 
    \begin{equation}
        \label{eq_composition_local}
        \begin{aligned}
            &P_\hbar\chi_\iota\varphi_\iota^* \Op^1_\hbar (\tilde\varphi_{\iota*}(\kappa_\iota a))\varphi_{\iota*}\chi_\iota \\
        &=\sum_{|\alpha|\leq m} \varphi_\iota^* p^\iota_\alpha (\hbar; q)(f(r)^{-1}\hbar D_\theta)^{\alpha^\prime}(\hbar D_r)^{\alpha_0}\Op^1_\hbar (\tilde\varphi_{\iota*}(\kappa_\iota a))\varphi_{\iota*}\chi_\iota \\
        &=\chi_\iota\varphi_\iota^* \Op^1_\hbar (b_\iota)\varphi_{\iota*}\chi_\iota. 
        \end{aligned}
    \end{equation}
    Here 
    \begin{equation}
        \label{eq_b_iota}
        b_\iota(q, p):=\sum_{|\alpha|\leq m} p^\iota_\alpha (\hbar; q)f(r)^{-|\alpha^\prime|}(p+\hbar D_q)^\alpha \tilde\varphi_{\iota*}(\kappa_\iota a)(q, p). 
    \end{equation}
    A direct calculation shows that $b_\iota$ belongs to $\locsymb{m_1+m_2}{f}$ and $\rmop{supp}b_\iota \subset \pi^{-1}(\rmop{supp}\varphi_{\iota*}\kappa_\iota)$. Hence we can apply Theorem \ref{theo_psido_local_global} and obtain a symbol $b\in S^{m_1+m_2}_f (T^*M)$ such that 
    \[
        \chi_\iota\varphi_\iota^* \Op^1_\hbar (b_\iota)\varphi_{\iota*}\chi_\iota
        =\Op^1_{M, \hbar} (b)+O_{L^2\to L^2}(\hbar^\infty). 
    \]
    $b$ has an asymptotic expansion \eqref{eq_asymptotic_b_III} and the identity \eqref{eq_principal_b} is also valid. 
\end{proof}

\subsection{Proof of main theorem}\label{subs_proof_main}

Before the proof of the main theorem (Theorem \ref{theo_parametrix}), we estimate the symbol $(z-\sigma (P_\hbar))^{-1}$. The ellipticity condition for $P_\hbar$ plays an essential role in the estimate. 

\begin{prop}\label{prop_symb_parametrix}
    Let $P_\hbar \in \mathrm{Diff}^m_{f, \hbar} (M; \Omega^{1/2})$ be an elliptic differential operator. Then, for any complex number $z$ with $\mathrm{dist}(z, \sigma (P_\hbar)(T^*M))>0$, the symbol $(z-\sigma (P_\hbar))^{-1}$ belongs to $S^{-m}_f (T^*M)$. 

    Moreover, the seminorms of $(z-\sigma (P_\hbar))^{-1}$ are estimated as 
    \[
        |(z-\sigma (P_\hbar))^{-1}|_{S^m_f (T^*M), N}
        \leq C\Delta_N (z) 
    \]
    where 
    \begin{align*}
        \Delta_N (z)
        :=&\sum_{l=0}^N \sup_{\substack{ \iota \in I_K \\ (x, \xi)\in \Gamma_\iota}}\frac{\jbracket{\xi}^{m(l+1)}}{|z-\tilde\varphi_{\iota*}\sigma (P_\hbar)(x, \xi)|^{l+1}} \\
        &+\sum_{l=0}^N \sup_{\substack{ \iota\in I_\infty \\ (q, p)\in \Gamma_\iota}}\frac{\jbracket{\rho\oplus f(r)^{-1}\eta}^{m(l+1)}}{|z-\tilde\varphi_{\iota*}\sigma (P_\hbar)(q, p)|^{l+1}}. 
    \end{align*}
\end{prop}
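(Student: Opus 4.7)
The plan is to reduce the claim to a Fa\`a di Bruno-type calculation in each local chart, combining the symbol estimates for $\sigma(P_\hbar)$ with the ellipticity bound for $|z-\sigma(P_\hbar)|^{-1}$. First I would observe that $\sigma(P_\hbar)\in S^m_f(T^*M)$: indeed, in a polar chart $\iota\in I_\infty$ the local expression
\[
\tilde\varphi_{\iota*}\sigma(P_\hbar)(q,p)=\sum_{|\alpha|\leq m}p^\iota_{\alpha,0}(q)(f(r)^{-1}\eta)^{\alpha'}\rho^{\alpha_0}
\]
has coefficients in $\mathcal{B}_f(V_\iota)$ by Definition \ref{def_elliptic_M}, and direct differentiation using $\partial_q^\alpha\partial_p^\beta$ together with the fact that $\partial_r$ applied to $f(r)^{-|\alpha'|}$ generates factors of $\partial_r^j\log f\in L^\infty$ (Assumption \ref{assu_f}) yields the estimate
\[
|\partial_q^\alpha\partial_p^\beta\tilde\varphi_{\iota*}\sigma(P_\hbar)(q,p)|
\leq C f(r)^{-|\alpha'|+|\beta'|}\jbracket{\rho\oplus f(r)^{-1}\eta}^{m-|\beta|}.
\]
The analogous estimate in a compact chart $\iota\in I_K$ is immediate from the coefficient bounds in Definition \ref{def_elliptic_M}.

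Next I would apply the Fa\`a di Bruno formula to $(z-\sigma(P_\hbar))^{-1}=\phi(\sigma(P_\hbar))$ with $\phi(s)=(z-s)^{-1}$. Writing $D=\partial_q^\alpha\partial_p^\beta$, a derivative of order $|\alpha|+|\beta|=N$ expands as
\[
D(z-\sigma(P_\hbar))^{-1}
=\sum_{l=1}^{N}\frac{l!}{(z-\sigma(P_\hbar))^{l+1}}
\sum_{\substack{(\alpha_i,\beta_i)\\ \sum_i(\alpha_i,\beta_i)=(\alpha,\beta)}}
c_{(\alpha_i,\beta_i)}\prod_{i=1}^{l}\partial_q^{\alpha_i}\partial_p^{\beta_i}\sigma(P_\hbar),
\]
where each $(\alpha_i,\beta_i)\neq(0,0)$. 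Using the symbol estimate for $\sigma(P_\hbar)$, every product over $i$ satisfies
\[
\Bigl|\prod_i \partial_q^{\alpha_i}\partial_p^{\beta_i}\tilde\varphi_{\iota*}\sigma(P_\hbar)\Bigr|
\leq C f(r)^{-|\alpha'|+|\beta'|}\jbracket{\rho\oplus f(r)^{-1}\eta}^{ml-|\beta|},
\]
since the $f(r)$-exponents and the weight exponents add over partitions.

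I would then invoke ellipticity to bound $|z-\tilde\varphi_{\iota*}\sigma(P_\hbar)|^{-(l+1)}\leq \Delta_N(z)\jbracket{\rho\oplus f(r)^{-1}\eta}^{-m(l+1)}$. Multiplying these two estimates collapses the weight to $\jbracket{\rho\oplus f(r)^{-1}\eta}^{-m-|\beta|}$, which is exactly the estimate required for $(z-\sigma(P_\hbar))^{-1}\in S^{-m}_f(T^*M)$ in polar coordinates; the compact-chart case is strictly easier and proceeds identically with $\jbracket{\xi}$ in place of $\jbracket{\rho\oplus f(r)^{-1}\eta}$ and without the $f(r)$ factors. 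Summing over partitions yields the quantitative bound in terms of $\Delta_N(z)$.

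The computation is essentially bookkeeping, so the main thing to be careful about is tracking the $f(r)$-exponents additively through the Fa\`a di Bruno sum and verifying that the two ellipticity bounds (upper and lower) are applied on the correct sides; neither poses a genuine conceptual difficulty once the symbol estimate for $\sigma(P_\hbar)$ itself is in hand.
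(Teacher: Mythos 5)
Your proposal is correct and follows essentially the same route as the paper: both expand derivatives of $(z-\sigma(P_\hbar))^{-1}$ by a Fa\`a di Bruno-type formula, bound the resulting products of derivatives of $\sigma(P_\hbar)$ using the symbol estimates, and then absorb the $|z-\sigma(P_\hbar)|^{-(l+1)}$ factors via ellipticity. The only cosmetic difference is that the paper packages the $f(r)$-bookkeeping into the normalized derivatives $\partial_f^{\mathfrak{a}}$, whereas you track the $f(r)$-exponents by hand; the content is identical.
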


\begin{proof}
    We estimate $(z-\sigma (P_\hbar))^{-1}$ in polar coordinates. We introduce a shorthand notation 
    \[
        \partial_f^\mathfrak{a}=(f(r)^{-1}\partial_\theta)^{\alpha^\prime}(f(r)\partial_\eta)^{\beta^\prime}\partial_r^{\alpha_0} \partial_\rho^{\beta_0} 
    \]
    for $\mathfrak{a}=(\alpha, \beta)=(\alpha_0, \alpha^\prime, \beta_0, \beta^\prime)\in \nni^{2n}$. 
    The derivative of $(z-\sigma(P_\hbar))^{-1}$ for $|\mathfrak{a}|\geq 1$ is the form 
    \[
    \partial_f^\mathfrak{a} (z-\sigma(P_\hbar))^{-1}=\sum_{l=1}^{|\mathfrak{a}|}(z-\sigma(P_\hbar))^{-l-1}c_{l, \mathfrak{a}}, 
    \]
    where $c_{l, \mathfrak{a}}$ is 
    \begin{align*}
    c_{l,\mathfrak{a}}(q, p)
    =\sum_{\substack{\mathfrak{a}=\mathfrak{a}_1+\cdots+\mathfrak{a}_l \\ |\mathfrak{a}_1|, \ldots, |\mathfrak{a}_l|\geq 1}} \partial_f^{\mathfrak{a}_1}\sigma(P_\hbar) \cdots\partial_f^{\mathfrak{a}_l}\sigma(P_\hbar)
    \in S^{ml-|\beta|}_f(T^*M). 
    \end{align*}
    Thus $\partial_f^\mathfrak{a} (z-\sigma(P_\hbar))^{-1}$ is estimated as 
    \[
        |\partial_f^\mathfrak{a} (z-\sigma(P_\hbar)(q, p))^{-1}|
        \leq C\sum_{l=0}^{|\mathfrak{a}|}\frac{\jbracket{\rho \oplus f(r)^{-1}\eta}^{ml-|\beta|}}{|z-\sigma (P_\hbar)(q, p)|^{l+1}}.  
    \]
    The estimate in the case of $\iota\in I_K$ is proved similarly. 
\end{proof}

\begin{proof}[Proof of Theorem \ref{theo_parametrix}]
We define $b_0(z)(x, \xi):=(z-\sigma (P_\hbar)(x, \xi))^{-1}$. We have $(z-\sigma (P_\hbar)(x, \xi))^{-1}\in S^{-m}_f(T^*M)$ by Proposition \ref{prop_symb_parametrix}. By Theorem \ref{theo_diff_psido_composition}, there exists a symbol $e_1(z)(x, \xi)\in S^{-1}_f(T^*M)$ such that the equality 
\begin{equation}
    \label{eq_parametrix_1}
    (z-P_\hbar)\Op^1_{M, \hbar}(b_0(z))=1+\Op^1_{M, \hbar}(e_1(z))+O_{L^2\to L^2}(\hbar^\infty). 
\end{equation}
holds. 

Next we define $b_1(z):=-e_1(z)(z-\sigma (P_\hbar))^{-1}\in S^{-m-1}_f(T^*M)$. Then by Theorem \ref{theo_diff_psido_composition}, there exists a symbol $e_2 (z)\in S^{-2}_f (T^*M)$ such that the equality 
\begin{equation}
    \label{eq_parametrix_2}
    (z-P_\hbar)\Op^1_{M, \hbar}(b_1(z))=-\Op^1_{M, \hbar}(e_1(z))+\Op^1_{M, \hbar}(e_2(z))+O_{L^2\to L^2}(\hbar^\infty)
\end{equation}
holds. Summing up \eqref{eq_parametrix_1} and \eqref{eq_parametrix_2}, we obtain 
\[
    (z-P_\hbar)\Op^t_\hbar (b_0(z)+\hbar b_1(z))=1+\hbar^2\Op^1_{M, \hbar}(e_2(z))+O_{L^2\to L^2}(\hbar^\infty). 
\]
We repeat this procedure and obtain symbols $b_N(z)\in S^{m-N}_f(T^*M)$ and $e_{N+1}(z)\in S^{-N-1}_f(T^*M)$ such that the equality 
\begin{equation}
    \label{eq_parametrix_n}
    (z-P_\hbar)\Op^1_{M, \hbar}\left(\sum_{j=0}^N \hbar^j b_j(z)\right)=1+\hbar^{N+1}\Op^1_{M, \hbar}(e_{N+1}(z))+O_{L^2\to L^2}(\hbar^\infty)
\end{equation}
holds. Since $e_{N+1}(z)\in S^{-N-1}_f (T^*M)\subset S^0_f (T^*M)$, we can apply Theorem \ref{theo_L2_bdd_manifold} and obtain the boundedness of $\Op^1_{M, \hbar}(e_{N+1}(z))$ on $L^2(M; \Omega^{1/2})$. 

Now we define a symbol $b(z)$ as an asymptotic expansion 
\[
    b(z)\sim \sum_{j=0}^\infty \hbar^j b_j(z)
\]
by the Borel theorem. Then 
\[
    (z-P_\hbar)\Op^1_{M, \hbar}(b(z))=1+\hbar^{N+1}\Op^1_{M, \hbar} S^{-N-1}_f(T^*M)
\]
for all $N\in \nni$. 
\end{proof}

\subsection{Essential self-adjointness of elliptic differential operators}\label{subs_ess_selfadj}

As an application, we investigate fundamental properties of elliptic differential operators on manifolds with ends. In this section we focus on the essential self-adjointness of symmetric and elliptic differential operators. 

\begin{theo}\label{theo_self_adjoint}
Suppose that $ P_\hbar \in \mathrm{Diff}^m_{f, \hbar} (M; \Omega^{1/2})$ is elliptic and symmetric in $L^2(M; \Omega^{1/2})$. Then $ P_\hbar $ is essentially self-adjoint in $L^2(M; \Omega^{1/2})$ for sufficiently small $\hbar>0$. 
\end{theo}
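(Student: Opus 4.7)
The plan is to invoke the standard criterion: the symmetric operator $P_\hbar$ on $C_c^\infty(M;\Omega^{1/2})$ is essentially self-adjoint in $L^2(M;\Omega^{1/2})$ if and only if $\ker(P_\hbar^* \mp i) = \{0\}$. Symmetry of $P_\hbar$ forces $\sigma(P_\hbar)$ to be real-valued, because the principal symbol of the formal adjoint is $\overline{\sigma(P_\hbar)}$ and must coincide with $\sigma(P_\hbar)$; hence $\pm i$ lie at positive distance from $\sigma(P_\hbar)(T^*M)\subset\mathbb{R}$, so Theorem~\ref{theo_parametrix} applies at $z=\mp i$ and supplies $Q_\hbar^\pm := \Op^1_{M,\hbar}(b(\mp i))$ with symbol of order $-m$ and an error $R_\hbar^\pm$ with $\|R_\hbar^\pm\|_{L^2\to L^2}=O(\hbar^\infty)$ satisfying
\[
    (\mp i - P_\hbar)\,Q_\hbar^\pm \psi \;=\; \psi + R_\hbar^\pm \psi, \qquad \psi\in C_c^\infty(M;\Omega^{1/2}).
\]

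Fix $w\in L^2(M;\Omega^{1/2})$ with $P_\hbar^* w = \pm i w$; I aim to show $w=0$ for small $\hbar$. Pairing the parametrix identity with $w$ gives
\[
    \langle\psi,w\rangle + \langle R_\hbar^\pm \psi, w\rangle
    \;=\;\langle (\mp i - P_\hbar) Q_\hbar^\pm \psi, w\rangle.
\]
Provided $u := Q_\hbar^\pm \psi$ lies in the domain $D(\bar P_\hbar)$ of the graph closure of $P_\hbar|_{C_c^\infty}$, the inclusion $\bar P_\hbar \subset P_\hbar^*$ coming from symmetry justifies $\langle P_\hbar u, w\rangle = \langle u, P_\hbar^* w\rangle = \langle u, \pm i w\rangle = \mp i \langle u, w\rangle$, so the right-hand side above equals $\mp i\langle u,w\rangle - (\mp i)\langle u,w\rangle = 0$. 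Hence $\langle \psi, w\rangle = -\langle R_\hbar^\pm\psi, w\rangle$ for every $\psi\in C_c^\infty$; choosing $\psi_n\to w$ in $L^2$ and passing to the limit yields $\|w\|_{L^2}^2 \leq \|R_\hbar^\pm\|_{L^2\to L^2}\|w\|_{L^2}^2$, forcing $w=0$ as soon as $\|R_\hbar^\pm\|_{L^2\to L^2}<1$.

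The technical heart is to verify $u\in D(\bar P_\hbar)$ by constructing $C_c^\infty$ approximants. Fix $\chi\in C_c^\infty(\mathbb{R})$ with $\chi\equiv 1$ near $0$, and let $\chi_n\in C_c^\infty(M)$ be the cylindrical cutoff given by $\chi_n = \chi(r/n)$ in each polar chart and glued to $1$ on $M\setminus E$. By Proposition~\ref{prop_smoothness} the function $u$ is smooth, so $\chi_n u\in C_c^\infty(M;\Omega^{1/2})$; clearly $\chi_n u\to u$ and $\chi_n P_\hbar u\to P_\hbar u$ in $L^2$ by dominated convergence (note $P_\hbar u\in L^2$ by the parametrix identity). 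Since $\chi_n$ depends only on $r$, each factor $(f(r)^{-1}\hbar D_\theta)^{\alpha'}$ commutes through, and the commutator reduces in any polar chart to a finite sum
\[
    [P_\hbar,\chi_n] \;=\; \sum_{|\alpha|\le m}\sum_{k=1}^{\alpha_0} c_{k,\alpha}\,\hbar^k\, p_\alpha^\iota(\hbar;r,\theta)\, \chi_n^{(k)}(r)\,(f(r)^{-1}\hbar D_\theta)^{\alpha'}(\hbar D_r)^{\alpha_0-k},
\]
with $\|\chi_n^{(k)}\|_{L^\infty}=O(n^{-k})$. By Theorem~\ref{theo_diff_psido_composition}, composing each differential factor of order $|\alpha|-k$ with $Q_\hbar^\pm$ produces a pseudodifferential operator of order $|\alpha|-k-m\le -k\le -1$ (modulo $O_{L^2\to L^2}(\hbar^\infty)$), whose symbol is independent of $n$; Theorem~\ref{theo_L2_bdd_manifold} then bounds it on $L^2$ uniformly in $n$. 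Multiplication by the bounded factor $\hbar^k p_\alpha^\iota \chi_n^{(k)}$ contributes operator norm $O(n^{-k})$, so $\|[P_\hbar,\chi_n]u\|_{L^2}=O(n^{-1})\to 0$. Consequently $P_\hbar(\chi_n u) = \chi_n P_\hbar u + [P_\hbar,\chi_n]u \to P_\hbar u$ in $L^2$, and $u\in D(\bar P_\hbar)$.

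I expect the commutator estimate just described to be the main obstacle: one must exploit that $\chi_n$ is cylindrical so that only radial derivatives appear, and trade the lost orders for $L^2$-boundedness via the composition calculus and the Calder\'on--Vaillancourt-type bound of Section~\ref{sec_bisymbol}, because naive Sobolev embeddings are unavailable when $f(r)$ grows exponentially at infinity. The other case $P_\hbar^* w = -iw$ is handled identically with $\pm$ and $\mp$ interchanged.
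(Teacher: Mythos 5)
Your proposal is correct but attacks the problem from the dual side. The paper proves essential self-adjointness by showing that $(\pm i - P_\hbar) C_c^\infty(M;\Omega^{1/2})$ is dense: it takes an arbitrary $u\in L^2$, uses the parametrix and the invertibility of $1+R_{\pm,\hbar}$ to produce an approximate preimage $\Op^1_{M,\hbar}(b(\pm i))v$, and then applies a cylindrical cutoff $\chi(\delta\tilde r)$ to replace this non-compactly-supported approximation by a genuine element of $C_c^\infty$, controlling the error via the commutator estimate $\|[P_\hbar,\chi(\delta\tilde r)]\Op^1_{M,\hbar}(b(\pm i))v\|_{L^2}=O(\delta)$. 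You instead verify the deficiency-index criterion $\ker(P_\hbar^*\mp i)=\{0\}$: pairing the parametrix identity against a putative kernel element $w$, you reduce to showing $Q_\hbar^\pm\psi\in D(\bar P_\hbar)$, and you again invoke a cylindrical cutoff together with the same commutator bound to produce $C_c^\infty$ approximants in graph norm. The two criteria are equivalent (via $\operatorname{Ran}(T)^\perp=\ker(T^*)$), and the technical core — the cutoff and the $O(\delta)$ (resp. $O(n^{-1})$) commutator estimate, obtained by combining Theorem~\ref{theo_diff_psido_composition} with Theorem~\ref{theo_L2_bdd_manifold} — is identical in both; what differs is only the outer functional-analytic wrapper. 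Your version buys a little conceptual clarity (you make explicit why $\pm i$ lies at positive distance from $\sigma(P_\hbar)(T^*M)$, namely that symmetry forces the principal symbol to be real, which the paper leaves tacit), at the price of a slightly longer chain of abstract identities involving $\bar P_\hbar$, $P_\hbar^*$, and the pairing argument. One small point to tighten: when you factor the commutator into an $n$-dependent multiplier times an $n$-independent ``local differential factor composed with $Q_\hbar^\pm$,'' you should instead observe, as the paper does, that $[P_\hbar,\chi_n]$ is itself a global semiclassical differential operator of the form $n^{-1}\hbar\, P'_\hbar(n)$ with $P'_\hbar(n)\in\mathrm{Diff}^{m-1}_{f,\hbar}(M;\Omega^{1/2})$ having seminorms bounded uniformly in $n$, and then apply the global composition and $L^2$-boundedness theorems directly; working factor-by-factor inside a single chart does not quite mesh with the chart-gluing built into $\Op^1_{M,\hbar}$.
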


\begin{proof}
We prove that $(\pm i- P_\hbar )C_c^\infty( M; \Omega^{1/2} )$ is dense in $L^2(M; \Omega^{1/2})$ for sufficiently small $\hbar >0$. Construct parametrices $\Op^1_{M, \hbar}(b(\pm i))$ as in Theorem \ref{theo_parametrix}. Then 
\[
(\pm i- P_\hbar )\Op^1_{M, \hbar}(b(\pm i)) u=u+ R_{\pm, \hbar}u
\]
for all $u\in C_c^\infty( M; \Omega^{1/2} )$. Here $\|R_{\pm, \hbar}\|_{L^2\to L^2}=O(\hbar^\infty)$. We take $\hbar$ so small that 
\[
\| R_{\pm, \hbar}\|_{L^2\to L^2}<1. 
\]
Then $1+ R_{\pm, \hbar}$ is invertible in $L^2(M; \Omega^{1/2})$. 

Now we take an arbitrary $u\in L^2(M; \Omega^{1/2})$ and $\varepsilon>0$. Since $1+ R_{\pm, \hbar}$ is invertible and $C_c^\infty( M; \Omega^{1/2} )$ is dense in $L^2(M; \Omega^{1/2})$, we can take $v\in C_c^\infty( M; \Omega^{1/2} )$ such that 
\begin{equation}\label{eq_ess_selfadj_approx_1}
\|u-(1+ R_{\pm, \hbar})v\|_{L^2}< \frac{\varepsilon}{2}. 
\end{equation}

Next we take $\chi\in C_c^\infty(\mathbb{R})$ with $\chi(r)=1$ if $|r|<2$. We take a smooth function $\tilde r: M\to [0, \infty)$ such that  
\[
    \tilde r(x):=
    \begin{cases}
        (\rmop{pr}\nolimits_1 \circ \Psi) (x) & \text{if } x\in \Psi^{-1}([2, \infty)\times S), \\
        0 & \text{if } x\in M\setminus \Psi^{-1}([1, \infty)\times S). 
    \end{cases}
\]
Here $\rmop{pr}\nolimits_1 (r, \theta):=r$. 
For each $\iota\in I_\infty$ and $\delta>0$, we define 
\[
 Q_{\pm, \delta, \hbar}:=\chi (\delta \tilde r(x))\Op^1_{M, \hbar}(b(\pm i)). 
\]
We note that $[P_\hbar, \chi(\delta \tilde r)]=\delta \hbar P^\prime_\iota(\delta)$ for some semiclassical differential operator $P^\prime_\hbar (\delta)\in \mathrm{Diff}^{m-1}_{f, \hbar}(M; \Omega^{1/2})$. Then 
\begin{equation}\label{eq_ess_selfadj_approx_2}
\| [P_\hbar, \chi(\delta \tilde r(x))]\Op^1_{M, \hbar}(b(\pm i))v\|_{L^2}=O(\delta) \quad (\delta\to 0) 
\end{equation}
by  Theorem \ref{theo_L2_bdd_manifold} and Theorem \ref{theo_diff_psido_composition}. We combine \eqref{eq_ess_selfadj_approx_1} and \eqref{eq_ess_selfadj_approx_2} and obtain 
\begin{align*}
    &\limsup_{\delta\to +0}\| (\pm i-P_\hbar)Q_{\pm, \delta, \hbar}v-u\|_{L^2} \\
    &\leq \limsup_{\delta\to +0} (\| \chi (\delta \tilde r)(\pm i-P_\hbar)Q_{\pm, \delta, \hbar}v-\chi (\delta \tilde r(x))u\|_{L^2}+\|(1-\chi (\delta \tilde r(x)))u\|_{L^2} \\
    &\quad +\| [P_\hbar, \chi (\delta \tilde r(x))]Q_{\pm, \delta, \hbar}v\|_{L^2}) \\
    &=\| (1+R_{\pm, \hbar})v-u\|_{L^2}\leq \frac{\varepsilon}{2}. 
\end{align*}
This implies the existence of $\delta>0$ such that 
\[
    \| (\pm i-P_\hbar)Q_{\pm, \delta, \hbar}v-u\|_{L^2}<\varepsilon. 
\]
Thus, if we take $w_\pm:=Q_{\pm, \delta, \hbar}v\in C_c^\infty (M; \Omega^{1/2})$ for sufficiently small $\delta>0$, then $\|(\pm i-P_\hbar)w_\pm -u\|_{L^2}<\varepsilon$. 
% This implies the existence of $\delta>0$ such that 
% \[
% \|P_\hbar  Q_{\pm, \delta, \hbar}v- P_\hbar \Op^1_{M, \hbar}(b(\pm i))v\|_{L^2}<\frac{\varepsilon}{2}. 
% \]
% We define $w_\pm:=Q_{\pm, \delta, \hbar}v \in C_c^\infty(M; \Omega^{1/2})$. Then $ P_\hbar w_\pm$ is close to $u$ since
% \begin{align*}
% \| (\pm i-P_\hbar) w_\pm-u\|_{L^2}
% &\leq \| P_\hbar w- P_\hbar \Op^1_{M, \hbar}(b(\pm i)) v\|_{L^2}
% +\|(1+ R_{\pm, \hbar})v-u\|_{L^2} \\
% &<\varepsilon. \qedhere
% \end{align*}
\end{proof}

As a corollary, we can prove the essential self-adjointness of the Laplacian. We equip $M$ with a Riemannian metric satisfying Assumption \ref{assu_metric_M}. 
In this case we can apply Theorem \ref{theo_self_adjoint} to the Laplacian $-\hbar^2 \triangle_g$. 

\begin{coro}\label{coro_ess_selfadj_laplacian}
    Suppose the Riemannian metric $g$ satisfies Assumption \ref{assu_metric_M}. Then the Laplacian $-\hbar^2\triangle_g$ associated with $g$ is an elliptic differential operator in $\mathrm{Diff}^2_{f, \hbar}( M; \Omega^{1/2})$ and essentially self-adjoint on $L^2(M; \Omega^{1/2})$. 
\end{coro}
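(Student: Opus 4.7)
The plan is to verify each hypothesis of Theorem \ref{theo_self_adjoint} applied to $-\hbar^2\triangle_g$: membership in $\mathrm{Diff}^2_{f, \hbar}(M; \Omega^{1/2})$, ellipticity, and symmetry. Then essential self-adjointness for small $\hbar$ follows immediately; since $-\hbar^2 \triangle_g$ is essentially self-adjoint if and only if $-\triangle_g$ is (the factor $\hbar^2$ is a harmless scalar), the smallness of $\hbar$ imposes no restriction on the final conclusion.

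For the class membership, I would invoke the Laplacian example that immediately precedes the corollary. In a polar coordinate chart $\varphi_\iota$ with $\iota\in I_\infty$, the pushforward $(\mathrm{id}\times\varphi'_\iota)_* g$ has the form $h(r,\theta,\diff r, f(r)\diff\theta)$ by Assumption \ref{assu_metric_M}, so after substituting $\tilde\theta = f(r)\theta$ the coefficients $h^\iota_{jk}$ lie in $\mathcal{B}_f(\mathbb{R}_+\times V_\iota^\prime)$, which is precisely the hypothesis under which the formula
\[
\varphi_{\iota*}(-\hbar^2\triangle_g)\varphi_\iota^*
=\sum_{|\alpha|\le 2} p^\iota_\alpha(\hbar; r, \theta)(f(r)^{-1}\hbar D_\theta)^{\alpha'}(\hbar D_r)^{\alpha_0}
\]
holds with $p^\iota_\alpha\in\mathcal{B}_f$; the sub-principal potential $V_\varphi$ is also in $\mathcal{B}_f$ by the same assumption. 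On compact charts $\iota\in I_K$ there is nothing to check beyond smoothness and local boundedness. Thus $-\hbar^2\triangle_g\in\mathrm{Diff}^2_{f,\hbar}(M;\Omega^{1/2})$ with principal symbol $\sigma(-\hbar^2\triangle_g)(x,\xi)=|\xi|^2_{g^*}$.

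For ellipticity, note that $\sigma(-\hbar^2\triangle_g)(T^*M)\subset[0,\infty)$, so any $z\in\mathbb{C}$ with $\mathrm{dist}(z,[0,\infty))>0$ is admissible. In polar coordinates, $|\xi|^2_{g^*}$ pushes forward to a quadratic form in $(\rho, f(r)^{-1}\eta)$ whose coefficient matrix is $h^{jk}(r,\theta)$. Assumption \ref{assu_metric_M} gives the two-sided comparison $C^{-1}h(1,\theta,\cdot)\le h(r,\theta,\cdot)\le C h(1,\theta,\cdot)$ on $[1,\infty)\times S$, and the compactness of $S$ upgrades this to uniform constants, yielding
\[
C^{-1}\langle\rho\oplus f(r)^{-1}\eta\rangle^2\le 1+\tilde\varphi_{\iota*}|\xi|^2_{g^*}\le C\langle\rho\oplus f(r)^{-1}\eta\rangle^2,
\]
which is exactly the required upper and lower bound. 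On compact charts the analogous bound is immediate. This gives ellipticity in the sense of Definition \ref{def_elliptic_M}.

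Symmetry of $-\hbar^2\triangle_g$ on $C_c^\infty(M;\Omega^{1/2})$ is standard: one verifies it chart-by-chart by integration by parts in the local representation, using that the half-density quantization removes any boundary term involving the derivative of the volume form. With all three hypotheses in hand, Theorem \ref{theo_self_adjoint} yields essential self-adjointness of $-\hbar^2\triangle_g$ for all sufficiently small $\hbar>0$, and hence of $-\triangle_g$ itself. The main technical obstacle, if any, is carefully confirming uniform ellipticity in the polar charts: one must ensure that the comparison of metric tensors provided by Assumption \ref{assu_metric_M} on $[1,\infty)\times S$ extends to the full range of $r$ appearing in $\Gamma_\iota$, which is handled by separately treating the compact region using the standard elliptic bound and then patching via the finite atlas $\{\varphi_\iota\}_{\iota\in I}$.
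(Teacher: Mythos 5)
Your proof is correct and follows the same route the paper intends: verify the three hypotheses of Theorem \ref{theo_self_adjoint} (membership in $\mathrm{Diff}^2_{f,\hbar}(M;\Omega^{1/2})$, ellipticity, symmetry) using the Laplacian example and Assumption \ref{assu_metric_M}, then apply that theorem. Your scaling observation --- that essential self-adjointness of $-\hbar^2\triangle_g$ for one value of $\hbar$ implies it for all, so the ``sufficiently small $\hbar$'' clause in Theorem \ref{theo_self_adjoint} imposes no restriction --- is a useful point the paper leaves implicit, since the corollary is stated without that restriction.
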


\begin{rema*}
    The function $f(r)$ in the symbol class $S^m_f (T^*M)$ and Assumption \ref{assu_metric_M} does not need to be same. For example, we consider a manifold $M$ with hyperbolic ends $g=\diff r^2+e^{2r}h(\theta, \diff \theta)$. Then the associated semiclassical Laplacian $-\hbar^2\triangle_g$ does not only belong to $\mathrm{Diff}^2_{e^r, \hbar}(M; \Omega^{1/2})$, but also belongs to $\mathrm{Diff}^2_{1, \hbar}(M; \Omega^{1/2})$. Thus we can prove Corollary \ref{coro_ess_selfadj_laplacian} by employing the symbol class $S^m_1(T^*M)$, which is easier to employ than the symbol class $S^m_{e^r} (T^*M)$. 

    The symbol class $S^m_{e^r} (T^*M)$ is employed in an analysis of differential operators in $\mathrm{Diff}^m_{e^r, \hbar}(M; \Omega^{1/2})$ such as the Laplacian $-\hbar^2\triangle_g$ associated with the metric $g=\diff r^2+e^{2r}h(r, \theta, \diff \theta)$ satisfying 
    \[
        (e^{-r}\partial_\theta)^{\alpha^\prime}\partial_r^{\alpha_0}h_{jk}(r, \theta)\in L^\infty
    \]
    for any multiindices $\alpha=(\alpha_0, \alpha^\prime)\in \nni \times \nni^{n-1}$. 
\end{rema*}

\section*{Acknowledgements}
The author thanks Professor Kenichi Ito and Professor Shu Nakamura for valuable discussion and useful advice and comments. He especially thanks Professor Shu Nakamura for proposing problems on pseudodifferential calculus in polar coordinates.

%References
\bibliography{mybib_2201}
\bibliographystyle{plain}

\end{document}